\newcommand{\C}{\mathbb{C}}
\newcommand{\Z}{\mathbb{Z}}
\newcommand{\K}{\mathbb{K}}
\newcommand{\N}{\mathbb{N}}
\newcommand{\oh}{\mathcal{O}}
\newcommand{\h}{\mathcal{H}}
\newcommand{\Frac}{\mathrm{Frac}}
\newcommand{\PI}{{\rm PI}\text{-}{\rm deg}}
\newcommand{\End}{\mathrm{End}}
\newcommand{\CSpec}{\mathrm{C.Spec}}
\DeclareMathOperator{\Ima}{Im}
\DeclareMathOperator{\Id}{Id}
\newcommand{\isomarrow}{\xrightarrow{\,\smash{\raisebox{-0.65ex}{\ensuremath{\scriptstyle\sim}}}\,}}
\theoremstyle{plain}
\newtheorem{theorem}{Theorem}[section]
\newtheorem{lemma}[theorem]{Lemma}
\newtheorem{proposition}[theorem]{Proposition}
\newtheorem{corollary}[theorem]{Corollary}
\newtheorem{hypothesis}[theorem]{Hypothesis}
\theoremstyle{definition}
\newtheorem{definition}[theorem]{Definition}
\newtheorem{remark}[theorem]{Remark}
\theoremstyle{remark}
\title{A Deleting Derivations Algorithm for Quantum Nilpotent Algebras at Roots of Unity}
\author[1]{St\'{e}phane Launois\thanks{Partially supported by EPSRC grant EP/R009279/1.}}
\affil[1]{School of Mathematics, Statistics and Actuarial Science, University of Kent, Canterbury, Kent, CT2 7FS, UK.}
\author[2]{Samuel A.\ Lopes\thanks{Partially supported by CMUP, member of LASI, which is financed by national funds through FCT -- Funda\c c\~ao para a Ci\^encia e a Tecnologia, I.P., under the projects with reference UIDB/00144/2020 and UIDP/00144/2020.}}
\affil[2]{CMUP, Departamento de Matem\'atica, Faculdade de Ci\^encias, Universidade do Porto, Rua do Campo Alegre s/n, 4169--007 Porto, Portugal.}
\author[1]{Alexandra Rogers}
\begin{document}
\maketitle

\begin{abstract}
This paper extends an algorithm and canonical embedding in \cite{Cauchon} to a large class of quantum algebras. It applies to iterated Ore extensions over a field satisfying some suitable assumptions which cover those of Cauchon's original setting but also allows for roots of unity. The extended algorithm constructs a quantum affine space $A'$ from the original quantum algebra $A$ via a series of change of variables within the division ring of fractions $\Frac(A)$. The canonical embedding takes a completely prime ideal $P\lhd A$ to a completely prime ideal $Q\lhd A'$ such that when $A$ is a PI algebra, $\PI(A/P) = \PI(A'/Q)$. When the quantum parameter is a root of unity,  combining our construction with results from \cite{BellLaunoisRogers} allows us to state an explicit formula for the PI degree of completely prime quotient algebras. This paper ends with a method to construct a maximum dimensional irreducible representation of $A/P$ given a suitable irreducible representation of $A'/Q$ when $A$ is PI.
\end{abstract}

In \cite{Cauchon}, Cauchon developed an algorithm for a large class of quantum algebras originally named CGL extensions after Cauchon, Goodearl and Letzter who first proved a set of unifying results on the algebras in this class, and later renamed quantum nilpotent algebras \cite{LaunoisLenaganRigal2}. These are algebras which can be written in the form of an iterated Ore extension over a field with certain conditions placed on the defining automorphisms and skew-derivations. In essence, the algorithm takes a prime quotient of a suitable CGL extension and, via a series of changes of indeterminates within its division ring of fractions, obtains a prime quotient of a quantum affine space which shares the same division ring of fractions as the original algebra. Properties from the resulting quotiented quantum affine space can then be pulled back to the original quotient algebra. When the prime ideal is invariant under a rational action of a torus, the algorithm combined with Goodearl and Letzter's $\h$-stratification theory was used to prove the \emph{quantum Gel'fand Kirillov Conjecture} on prime quotients of a large class of generic quantum algebras, such as quantum euclidean spaces, quantum symplectic spaces, quantum matrices, and quantum Weyl algebras \cite[Th\'{e}or\`{e}me 6.1.1]{Cauchon}. Cauchon's algorithm has also been applied by many authors to gain a better understanding of the structure of quantum algebras in the generic setting. Such works include the aforementioned \cite{Cauchon-SpectrePremiers} and \cite{GoodearlLetzter} as well as \cite{Casteels,GeigerYakimov,GoodearlLaunoisLenagan-Tauvel,GoodearlYakimov,LaunoisLenaganNolan}. 

Cauchon's algorithm cannot be applied to such algebras at roots of unity, however, as the homomorphism defining the algorithm is not compatible with roots of unity. Several authors have recreated the main results of \cite{Cauchon} for quantum algebras at roots of unity satisfying slightly different conditions (\cite{Haynal} and \cite{LeroyMatczuk}). However, the explicit change of variables in the original algorithm, that was essential in the applications named above, was not recreated in either paper. This paper addresses this by extending Cauchon's deleting derivations algorithm, including the canonical embedding to track completely prime ideals, to include the root of unity case by utilising an adapted version, provided in \cite[Section 3]{Haynal}, of the homomorphism which lies at the heart of the procedure.

The main motivation for this paper is the fact that many quantum algebras become \emph{polynomial identity (PI) algebras} when taken at roots of unity and in this case the \emph{PI degree} is a useful invariant for deducing various properties of the algebra (see, for example,  \cite{BrownYakimov}, \cite{CekenPalmieriWangZhang1}, and \cite{CekenPalmieriWangZhang2}). The PI degree also plays a role in the representation theory of prime affine PI algebras, giving an upper bound on the dimension of irreducible representations \cite[Theorem I.13.5.]{BrownGoodearl}.

The extended algorithm defined in Section \ref{sectionDDA} of this paper takes a suitable iterated Ore extension $A$ and a completely prime ideal $P\in \CSpec(A)$ and constructs a quantum affine space $A'$ and a canonical embedding $\psi:\CSpec(A) \rightarrow \CSpec(A')$ such that $\Frac(A/P)=\Frac(A'/\psi(P))$. Furthermore, if $A$ is PI then, so too is $A'$ and thus $\PI(A/P)=\PI(A'/\psi(P))$. When $A'/\psi(P)$ is itself a quantum affine space then the PI degree is determined by the rank and values of the invariant factors of its commutation matrix (see for instance \cite[Lemma 2.4]{BellLaunoisRogers}). With this in mind, we focus on those ideals $P_w$ which give rise to quantum affine spaces $A'/\psi(P_w)$. For such ideals, the PI degree of $A/P_w$ is then computed in Theorem \ref{thmPIdegCauchonIdeal}. We end with a method to pass certain maximum-dimensional irreducible representations of any $A'/\psi(P)$ back through the deleting derivations algorithm to obtain maximum irreducible representations of $A/P$ (Section \ref{sectionIrredRepDDA}) and illustrate our result by constructing an explicit irreducible representation of maximal dimension for a completely prime quotient of $U_q^+(\mathfrak{s0}_5)$ at roots of unity.\\

\noindent {\bf Acknowledgement:} The material presented in this article is part of the PhD thesis of the third-named author \cite{Thesis}. She would like to thank the School of Mathematics, Statistics and Actuarial Science at the University of Kent for their financial support.

\section{Quantum Nilpotent Algebras at roots of unity}\label{section1}
Take $\K$ to be an arbitrary field and $R$ a $\K$-algebra. One can form an Ore extension $R[x, \sigma, \delta]$ in one indeterminate $x$ by defining a $\K$-automorphism $\sigma$ and a (left) $\sigma$-derivation $\delta$ (also called a \textit{skew-derivation}) such that for any $a, b \in R$ we have $\delta(ab)=\sigma(a)\delta(b)+\delta(a)b$. These maps define the commutation rules for $x$ with elements of $R$: given $r\in R$ we have $xr=\sigma(r)x +\delta(r)$.  Ore extensions satisfy a universal mapping property (\cite[Proposition 2.4]{GoodearlWarfield}) and thus are unique up to isomorphism (\cite[Corollary 2.5]{GoodearlWarfield}).  Iterating this process with $N$ indeterminates gives an iterated Ore extension $R[X_1, \sigma_1, \delta_1]\ldots [X_N, \sigma_N, \delta_N]$. Note that this is a noetherian domain by the Skew Hilbert Basis Theorem \cite[Theorem I.1.13]{BrownGoodearl}. The exact form of $R$ and the properties of the pairs $(\sigma_i, \delta_i)$ are what will define quantum nilpotent algebras at roots of unity and distinguish them from CGL extensions.

If all the skew-derivations in the iterated Ore extension above are the $0$ map, all the automorphisms act by scalar multiplication on each indeterminate, i.e. $\sigma_i(X_j)=\lambda_{i,j} X_j$ for all $1\leq j<i\leq N$ and some $\lambda_{i,j} \in \K^*$, and $R$ is a field $\mathbb{K}$ then the iterated Ore extension would describe a quantum affine space, which we can write as follows:
\[ \K[X_1, \sigma_1]\ldots [X_N, \sigma_N] = \K_{\Lambda}[X_1, \ldots, X_N] =: \oh_{q^{\Lambda}}(\K^N) \]
where $\Lambda:=(\lambda_{i,j})_{i,j} \in M_N(\K^*)$  with $\lambda_{j,i}=\lambda_{i,j}^{-1} $ for all $1\leq j\leq  i\leq N$. 

The following terms are needed to define our class of algebras of interest.

\begin{definition} Let $R$ be a $\K$-algebra and $R[x, \sigma, \delta]$ be an Ore extension.
	\begin{enumerate}
		\item We call the pair $(\sigma, \delta)$, and indeed the Ore extension, \emph{$q$-skew} if the automorphism and skew-derivation satisfy the relation $\delta \circ \sigma=q \sigma \circ \delta$, for some $1\neq q\in \K^*$.
		\item The skew-derivation $\delta$ is \emph{locally nilpotent} if, for every $0\neq a\in R$, there exists an integer $n_a\geq 0$ such that $\delta^{n_a}(a)=0$ and $\delta^m(a)\neq 0$ for any $m<n_a$.  We define such an $n_a$ as the \emph{$\delta$-nilpotence index of $a$}.
		\item \cite{Haynal} We say that the $\sigma$-derivation $\delta$ \emph{extends to a higher $q$-skew $\sigma$-derivation (h.$q$-s.$\sigma$-d.)} on $R$ if there is a sequence $\{d_n\}_{n=0}^{\infty}$ (denoted simply by $\{d_n\}$ when it is obvious which subscript indexes the sequence) of $\K$-linear operators on $R$ such that:
		\begin{itemize}
			\item $d_0$ is the identity;
			\item $d_1=\delta$;
			\item $d_n(rs)=\sum_{i=0}^n \sigma^{n-i}d_i(r)d_{n-i}(s)$ for all $r, s \in R$ and all $n$,
			\item $d_n\circ \sigma = q^n\sigma\circ d_n$ for all $n$.
		\end{itemize}
		A (h.$q$-s.$\sigma$-d.) is \emph{locally nilpotent} if, for all $0\neq r\in R$, there exists an integer $n_r\geq 0$ such that $d_n(r)=0$ for all $n\geq n_r$, and $d_m(r)\neq 0$ for any $m<n_r$. In this case we call $n_r$ the \emph{$d$-nilpotence index of $r$}. A h.$q$-s.$\sigma$-d is called \emph{iterative} if $d_n d_m=\binom{n+m}{m}_q d_{n+m}$ for all $n, m$. Here $\binom{n+m}{m}_q$ is the $q$-Gaussian binomial coefficient: a polynomial in $q$ over $\Z$ with properties similar to those of regular binomial coefficients. More precisely, for all $n \geq i \geq 0$, we have $\binom{n}{i}_q:=\frac{(n)!_q}{(i)!_q(n-i)!_q}$, where $(i)_q = \frac{q^i-1}{q-1}$ and $(i)!_q=(1)_q \cdots (i-1)_q(i)_q$ with the convention that $(0)!_q=1$. %and which also satisfy the $q$-Leibniz rules.
	\end{enumerate}
\end{definition}

The higher skew-derivations are what allowed Haynal to adapt Cauchon's \emph{effacement des derivations} homomorphism to a broader homomorphism which allows for roots of unity. For practical use, \cite[Theorem 2.8]{Haynal} gives a sufficient condition on a $q$-skew Ore extension $R[x; \sigma, \delta]$ for the $\sigma$-derivation $\delta$ to extend to a h.$q$-s.$\sigma$-d.\ for any $1\neq q \in \K^*$. This is used to check that the specific examples we are interested in do, indeed, satisfy this condition.

The algebras that we study in this article satisfy the following:
\begin{hypothesis}\label{hyp1} Let $A=\K[X_1][X_2;\sigma_2,\delta_2]\ldots [X_N;\sigma_N,\delta_N]$, where the $\sigma_i$ are $\K$-algebra automorphisms, and the $\delta_i$ are $\sigma_i$-derivations on the relevant subalgebras of $A$. Denote these subalgebras by $A_{i-1}:=\K[X_1][X_2;\sigma_2,\delta_2]\ldots [X_{i-1}; \sigma_{i-1}, \delta_{i-1}]$ so that $A_0:=\K$ and $A_N:=A$. We place the following conditions on $A$: 
	\begin{enumerate}[H\ref{hyp1}.1:]
		\item $\sigma_i(X_l)=\lambda_{i,l}X_l$ for all $l<i$ and $2\leq i \leq N$, where $\lambda_{i,l}\in \K^*$. \label{hyp1.2}
		\item $\Lambda:=(\lambda_{i,j})\in M_N(\K^*)$ is a multiplicatively antisymmetric matrix. That is, $\lambda_{i,l}=\lambda_{l,i}^{-1}$ for all $1\leq i, l \leq N$. \label{hyp1.3}
		\item For $2\leq i \leq N$ there exists some $1\neq q_i\in \K^*$ such that $\delta_i\circ \sigma_i=q_i\sigma_i\circ \delta_i$, i.e. $(\sigma_i,\delta_i)$ is $q_i$-skew.  \label{hyp1.4}							
		\item For all $2\leq i \leq N$ each $\delta_i$ extends to a locally nilpotent, iterative h.$q_i$-s.$\sigma_i$-d., $\{d_{i,n}\}_{n=0}^{\infty}$, on $A_{i-1}$, and $\sigma_l \circ d_{i,n}=\lambda_{l,i}^n d_{i,n} \circ \sigma_l$ on $A_{i-1}$ for all $n\geq 0 $ and $i+1\leq l \leq N$. \label{hyp1.7}
	\end{enumerate}
\end{hypothesis}

Algebras satisfying Hypothesis~\ref{hyp1} are called {\em quantum nilpotent algebras} and include quantum Schubert cell algebras, quantised Weyl algebras, quantised coordinate rings of affine, symplectic, and euclidean spaces, and quantum matrices. 

\section{Deleting Derivations Algorithm}\label{sectionDDA}
The main algorithm in this section will remove each skew-derivation from an algebra $A$ satisfying Hypothesis~\ref{hyp1} via an iterative change of indeterminates within the division ring of fractions of $A$, until we obtain a quantum affine space. This is developed in much the same way as in \cite[Section 3.2]{Cauchon} but applies the homomorphism defined in \cite[Proposition 3.4]{Haynal}. For this application we need the ability to reorder the extensions of $A$. This is achieved in the following lemma.
\begin{lemma}\label{mylemma}
Let
\begin{align*}
A&{}=A_{j-1}[X_j;\sigma_j,\delta_j][X_{j+1};\sigma_{j+1}]\ldots [X_N;\sigma_N],\\
\hat{A}&{}=A_{j-1}[X_j;\sigma_j,\delta_j][X_{j+1}^{\pm 1};\sigma_{j+1}]\ldots [X_N^{\pm 1};\sigma_N],
\end{align*}
where the automorphisms and skew-derivations satisfy properties H\ref{hyp1}.\ref{hyp1.2}--H\ref{hyp1}.\ref{hyp1.4} and $\delta_j\neq 0$.
\begin{enumerate}[(I)]
\item Then
\begin{align*}
A&{}=A_{j-1}[X_{j+1};\sigma^*_{j+1}]\ldots [X_N;\sigma^*_N][X_j;\sigma_j',\delta_j'],\\
\hat{A}&{}=A_{j-1}[X_{j+1}^{\pm 1};\sigma^*_{j+1}]\ldots [X_N^{\pm 1};\sigma^*_N][X_j;\sigma_j',\delta_j'],
\end{align*}
where
	\begin{enumerate}[(i)]
	\item $\sigma^*_i|_{A_{j-1}}=\sigma_i|_{A_{j-1}}$ for all $j+1\leq i\leq N$ and $\sigma^*_i(X_l)=\sigma_i(X_l)=\lambda_{i,l}X_l$ for all $j+1\leq l < i$;
	\item $\sigma_j'|_{A_{j-1}} = \sigma_j$ and $ \delta_j'|_{A_{j-1}} = \delta_j$;
	\item $\sigma_j'(X_l)=\lambda_{j,l}X_l=\lambda_{l,j}^{-1}X_l$ and $\delta_j'(X_l)=0$ for all $j+1 \leq l \leq N$.
	\end{enumerate}
\item $(\sigma_j',\delta_j')$ is $q_j$-skew.
\item Suppose that property H\ref{hyp1}.\ref{hyp1.7} is also satisfied. Then $\delta_j'$ extends to a locally nilpotent, iterative h.$q_j$-s.$\sigma_j'$-d., $\{d'_{j,n}\}_{n=0}^{\infty}$, on $A_{j-1}\langle X_{j+1}^{\pm 1}, X_{j+2}^{\pm 1}, \ldots, X_N^{\pm 1} \rangle$, where the $d_{j,n}'$ coincide with the $d_{j,n}$ on $A_{j-1}$ and,  for all $j+1\leq l \leq N$ and $n\geq 1$,  $d_{j,n}'(X_l)=0$. Moreover, $\{d_{j,n}'\}_{n=0}^{\infty}$ restricts to a h.$q_j$-s.$\sigma_j'$-d. on $A_{j-1}\langle X_{j+1}, X_{j+2}, \ldots, X_N \rangle$ which is also locally nilpotent and iterative.
\end{enumerate}
\end{lemma}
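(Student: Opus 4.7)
My strategy for (I) is to build the reordered iterated Ore extension step by step using the universal property of Ore extensions. Since the lemma's stated form of $A$ has no skew-derivations beyond $\delta_j$ and each $\sigma_l$ acts diagonally on the $X_i$ (H\ref{hyp1}.\ref{hyp1.2}), each $\sigma_l$ for $l>j$ restricts to a $\K$-algebra automorphism of $A_{j-1}$, and I define $\sigma_l^*$ inductively on $B_l := A_{j-1}[X_{j+1};\sigma^*_{j+1}]\cdots[X_l;\sigma^*_l]$ by the prescribed formulas; these extensions are well-defined because the $\sigma_i$'s act diagonally and hence commute pairwise on the $X_m$'s. I then define $\sigma_j'$ and $\delta_j'$ on $B_N$ as in the statement. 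Checking that $\sigma_j'$ respects the relation $X_l b = \sigma_l^*(b) X_l$ for $b \in A_{j-1}$ is again pairwise commutativity of the diagonal maps, and the corresponding check for $\delta_j'$ uses precisely H\ref{hyp1}.\ref{hyp1.7} with $n=1$, which gives $\sigma_l \delta_j = \lambda_{l,j} \delta_j \sigma_l$. The universal property then yields a $\K$-algebra homomorphism $B_N[X_j; \sigma_j', \delta_j'] \to A$, which is an isomorphism by PBW basis comparison; the Laurent version $\hat{A}$ is handled by localising at the powers of $X_l$ ($l > j$).

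Part (II) is routine: the identity $\delta_j' \sigma_j' = q_j \sigma_j' \delta_j'$ holds on $A_{j-1}$ by the hypothesis that $(\sigma_j, \delta_j)$ is $q_j$-skew, and on each $X_l$ ($l > j$) because both sides vanish. The set of elements on which the identity holds is closed under sums and products, hence equals $B_N$.

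Part (III) is the main obstacle. I would define $d_{j,n}'$ on $\hat{B}_N := A_{j-1}\langle X_{j+1}^{\pm 1}, \ldots, X_N^{\pm 1}\rangle$ using a PBW normal form (elements of $A_{j-1}$ on the left, monomials $X^{\mathbf{m}} = X_{j+1}^{m_{j+1}} \cdots X_N^{m_N}$ on the right) by $d_{j,n}'(a X^{\mathbf{m}}) := d_{j,n}(a) X^{\mathbf{m}}$. The properties $d_{j,0}' = \Id$, $d_{j,1}' = \delta_j'$, iterativity, and local nilpotence descend immediately from the corresponding properties of $\{d_{j,n}\}$ on $A_{j-1}$. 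The crux is the higher Leibniz rule $d_{j,n}'(rs) = \sum_{i=0}^n (\sigma_j')^{n-i}(d_{j,i}'(r))\, d_{j,n-i}'(s)$: writing $r = aX^{\mathbf{m}}$ and $s = bX^{\mathbf{p}}$ and reducing $rs$ to normal form introduces a factor $\tau_{\mathbf{m}}(b)$, where $\tau_{\mathbf{m}} := \prod_l (\sigma_l^*)^{m_l}$. Expanding the left-hand side via the Leibniz rule of $\{d_{j,n}\}$ on $A_{j-1}$ produces summands of the form $\sigma_j^{n-i}(d_{j,i}(a))\, d_{j,n-i}(\tau_{\mathbf{m}}(b))$, whereas applying $(\sigma_j')^{n-i}$ to $X^{\mathbf{m}}$ on the right-hand side contributes a factor $\mu_{\mathbf{m}}^{n-i}$ with $\mu_{\mathbf{m}} := \prod_l \lambda_{j,l}^{m_l}$. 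The two expressions agree term-by-term provided $d_{j,k}(\tau_{\mathbf{m}}(b)) = \mu_{\mathbf{m}}^{k} \tau_{\mathbf{m}}(d_{j,k}(b))$, which is iterated H\ref{hyp1}.\ref{hyp1.7} (recalling $\lambda_{l,j} = \lambda_{j,l}^{-1}$). Since the defining formula preserves non-negative monomials in the $X_l$, the restriction to $A_{j-1}\langle X_{j+1}, \ldots, X_N\rangle$ is automatic.
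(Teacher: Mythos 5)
Your proof is correct and, unlike the paper --- which simply cites \cite[Lemma 4.1]{Haynal} and declares the lemma ``an inductive corollary'' whose details are left to the reader --- you construct the reordered iterated Ore extension from scratch. The core ideas are sound: build $A_{j-1}[X_{j+1};\sigma^*_{j+1}]\cdots[X_N;\sigma^*_N][X_j;\sigma_j',\delta_j']$ via the universal property and identify it with $A$ by a PBW basis comparison; verify $q_j$-skewness on generators and close up under sums and products; and define the higher derivations $d_{j,n}'$ on normal forms $aX^{\mathbf m}$ by $d_{j,n}(a)X^{\mathbf m}$, reducing the higher Leibniz rule to the identity $d_{j,k}\circ\tau_{\mathbf m}=\mu_{\mathbf m}^{k}\,\tau_{\mathbf m}\circ d_{j,k}$, which indeed follows from iterating H\ref{hyp1}.\ref{hyp1.7} and $\lambda_{l,j}=\lambda_{j,l}^{-1}$. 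In effect you are re-deriving (an iterated form of) Haynal's lemma, which buys self-containment at the cost of length; the paper's citation buys brevity at the cost of making the reader chase the reference.

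One small repair is needed in your part (I). You derive the relation $\sigma_l\circ\delta_j=\lambda_{l,j}\,\delta_j\circ\sigma_l$ from ``H\ref{hyp1}.\ref{hyp1.7} with $n=1$,'' but H\ref{hyp1}.\ref{hyp1.7} is not among the hypotheses of parts (I) and (II) --- the lemma assumes only H\ref{hyp1}.\ref{hyp1.2}--H\ref{hyp1}.\ref{hyp1.4} there. The relation is nonetheless available, but for a different reason: it is a consistency constraint already built into the hypothesis that $A$ has the stated iterated Ore form. Concretely, for $\sigma_l$ (with $l>j$) to be a well-defined $\K$-algebra automorphism of $A_{j-1}[X_j;\sigma_j,\delta_j]$ acting by $\sigma_l(X_j)=\lambda_{l,j}X_j$, it must respect the Ore relation $X_j a=\sigma_j(a)X_j+\delta_j(a)$ for $a\in A_{j-1}$; applying $\sigma_l$ to both sides and comparing the ``constant'' (degree-zero in $X_j$) terms forces exactly $\sigma_l(\delta_j(a))=\lambda_{l,j}\,\delta_j(\sigma_l(a))$. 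Replace the appeal to H\ref{hyp1}.\ref{hyp1.7} in part (I) by this observation and the argument is airtight; your use of H\ref{hyp1}.\ref{hyp1.7} in part (III), where it \emph{is} assumed, is fine as written.
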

\begin{proof} This is an inductive corollary to \cite[Lemma 4.1]{Haynal} and the details are left to the reader.
\end{proof}

In the results that follow, we abuse notation slightly and use the same notation for maps defined on isomorphic algebras. We do this in the case where the action of the map on the generators of the algebra does not change, even though the algebras do. For example, suppose we have two isomorphic iterated Ore extensions
\begin{align*}
\K[X_1][X_2;\sigma_2, \delta_2] \cong \K[x_1][x_2;\bar{\sigma}_2, \bar{\delta}_2],
\end{align*}
where $\sigma_2(X_1)=\lambda X_1$ and $\bar{\sigma}_2(x_1)=\lambda x_1$ for the same $\lambda\in \K^*$. In this case, we simply denote $\bar{\sigma}_2$ by $\sigma_2$ (similarly for $\bar{\delta}_2$) and write $\K[x_1][x_2; \sigma_2, \delta_2]$.  

We may also abuse notation in a similar way for restrictions of maps to isomorphic subalgebras. 

We now describe the deleting derivations algorithm for algebras satisfying Hypothesis~\ref{hyp1}.

\subsection{The Algorithm}

For each $j\in \llbracket 2, N+1 \rrbracket$ we define a sequence $(X_1^{(j)},\ldots,X_N^{(j)})$ of elements of $F:=\Frac(A)$ and we set $A^{(j)}:=\K\langle X_1^{(j)},\ldots,X_N^{(j)} \rangle$ to be the subalgebra of $F$ generated by these elements.  For $j=N+1$ we set $(X_1^{(N+1)},\ldots, X_N^{(N+1)}):=(X_1,\ldots , X_N)$ so that $A^{(N+1)}=A$. For some fixed  $j \in \llbracket 2, N \rrbracket$ we assume that the algebra $A^{(j+1)}$ satisfies the following hypothesis, first setting $(x_1,\ldots , x_N):=(X_1^{(j+1)},\ldots,X_N^{(j+1)})$ for ease of notation.
\begin{hypothesis}\label{hyp2}
\ \begin{enumerate}[H\ref{hyp2}.1:]
\item $A^{(j+1)}\cong \K[X_1]\ldots[X_j;\sigma_j,\delta_j][X_{j+1};\sigma_{j+1}^{(j+1)}]\ldots[X_N;\sigma_N^{(j+1)}]$ by an isomorphism sending $x_i$ to $ X_i$ for all $i\in \llbracket 1, N\rrbracket$.  \label{hyp2.1}
\item For each $i \in \llbracket j+1, N \rrbracket$, the map $\sigma_i^{(j+1)}$ is an automorphism such that $\sigma_i^{(j+1)}(X_l)=\lambda_{i,l}X_l$ for all $l \in \llbracket 1, i-1 \rrbracket$. Furthermore, we have $\sigma_i^{(j+1)} \circ d_{l,n}=\lambda_{i,l}^n d_{l,n}\circ \sigma_i^{(j+1)}$ for all $l \in \llbracket 2, j \rrbracket$ and $n\geq 0$.\label{hyp2.2}
\end{enumerate}
\end{hypothesis}
This allows us to write
\begin{align}
A^{(j+1)}&{}=\K\langle x_1,\ldots,x_N \rangle \\ &{}= \K[x_1]\ldots[x_j;\sigma_j,\delta_j][x_{j+1};\sigma_{j+1}^{(j+1)}]\ldots[x_N;\sigma_N^{(j+1)}] \label{Xmap}
\end{align}
where, for $i \in \llbracket 2, j \rrbracket$, $\sigma_i$ and $\delta_i$ satisfy Hypothesis \ref{hyp1} and $\delta_i$ extends to a locally nilpotent, iterative h.$q_i$-s.$\sigma_i$-d., $\{d_{i,n}\}_{n=0}^{\infty}$, on $A_{i-1}$.

We define a new sequence of elements in $F$, $(y_1, \ldots, y_N):=(X_1^{(j)}, \ldots, X_N^{(j)})$:
\begin{equation}\label{yequation}
y_l=
\begin{cases}
x_l & l\geq j; \\
\sum_{n=0}^{\infty} q_j^{\frac{n(n+1)}{2}}(q_j-1)^{-n}d_{j,n}\circ \sigma_j^{-n}(x_l)x_j^{-n} & l<j.
\end{cases}
\end{equation}
Note that the sum stated above is finite, since the sequence $\{d_{j,n}\}_{n=0}^{\infty}$ is locally nilpotent. With this we define $A^{(j)}:=\K\langle y_1,\ldots,y_N\rangle$.

\begin{theorem}\label{mainthm}
Let $A$ be as in Hypothesis \ref{hyp1}, with $A^{(j+1)}$ defined as above and satisfying Hypothesis \ref{hyp2}. Then we have the following:
\begin{enumerate}[(I)]
\item \label{mainthmIsom} $A^{(j)} \cong \K[X_1][X_2;\sigma_2,\delta_2]\cdots [X_{j-1};\sigma_{j-1},\delta_{j-1}][X_j;\sigma_j^{(j)}]\cdots [X_N;\sigma_N^{(j)}]$ by an isomorphism which sends $y_l$ to $X_l$ for all $l\in \llbracket 1, N \rrbracket$.
\item For all $i\in \llbracket j, N \rrbracket$, the $\sigma_i^{(j)}$ are automorphisms satisfying
	\begin{enumerate}[(i)]
		\item $\sigma_i^{(j)}(X_l)=\lambda_{i,l}X_l$ for all $l \in \llbracket 1, i-1 \rrbracket$;
		\item $\sigma_i^{(j)} \circ d_{l,n}=\lambda_{i,l}^n d_{l,n} \circ \sigma_i^{(j)}$  for all $n\geq 0$ and all $l \in \llbracket 2, j-1 \rrbracket$.
	\end{enumerate}
\item Let $S_j:=\{x_j^n\mid n\geq 0\}=\{y_j^n\mid n\geq 0\}$. This is a multiplicatively closed set of regular elements satisfying the two sided Ore-condition in $A^{(j+1)}$ and $A^{(j)}$ and, furthermore, $A^{(j)}S_j^{-1}=A^{(j+1)}S_j^{-1}$. \label{mainthmSj}
\end{enumerate}
\end{theorem}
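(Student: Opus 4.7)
The plan is to adapt Cauchon's argument in \cite[Section 3.2]{Cauchon} by replacing his effacement homomorphism with Haynal's \cite[Proposition 3.4]{Haynal}. First I would apply Lemma \ref{mylemma} to reorder the presentation of $A^{(j+1)}$ so that $x_j$ becomes the outermost generator, giving
\[A^{(j+1)} \cong A_{j-1}[x_{j+1};\sigma^{*}_{j+1}]\cdots[x_N;\sigma^{*}_N][x_j;\sigma_j', \delta_j'].\]
Localising at $S_j = \{x_j^n\}$ puts the pair $(\sigma_j',\delta_j')$ on the ring $R:=A_{j-1}\langle x_{j+1}^{\pm 1},\ldots, x_N^{\pm 1}\rangle$, with $\delta_j'$ extending to a locally nilpotent, iterative higher $q_j$-skew $\sigma_j'$-derivation by Lemma \ref{mylemma}(III). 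This is exactly the input required by Haynal's Proposition 3.4, which I would invoke to obtain an injective ring homomorphism $\theta$ from an Ore extension of the deleted form (sharing the outermost generators $x_j,\ldots,x_N$ but with new lower-index generators $Y_1,\ldots,Y_{j-1}$) into $R[x_j;\sigma_j',\delta_j']$, fixing $x_j$ and sending each $Y_l$ to the expression for $y_l$ given by \eqref{yequation}.

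Injectivity of $\theta$ then transports the Ore extension structure of the source back onto $A^{(j)}$. A second application of Lemma \ref{mylemma}, used to re-order back to the original listing of indeterminates, gives claim (I): the first $j-1$ layers retain their skew-derivations because \eqref{yequation} acts as the identity on $A_{j-1}$, whereas the outer layers $j, j+1, \ldots, N$ become pure automorphism extensions since $\delta_j$ has been absorbed into the change of variables.

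For part (II), I would verify the assertions about $\sigma_i^{(j)}$ by cases. When $l\geq j$, $y_l = x_l$ and the claim reduces to property H\ref{hyp2}.\ref{hyp2.2} applied to $\sigma_i^{(j+1)}$. When $l<j$ and $i>j$, applying $\sigma_i^{(j+1)}$ termwise to the sum defining $y_l$ and using the commutation $\sigma_i^{(j+1)}\circ d_{j,n} = \lambda_{i,j}^n d_{j,n}\circ\sigma_i^{(j+1)}$ together with $\sigma_i^{(j+1)}(x_j^{-n}) = \lambda_{i,j}^{-n} x_j^{-n}$ introduces compensating factors $\lambda_{i,j}^n\cdot \lambda_{i,j}^{-n} = 1$, leaving only $\sigma_i^{(j+1)}(x_l) = \lambda_{i,l} x_l$; hence $\sigma_i^{(j)}(y_l) = \lambda_{i,l} y_l$. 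The case $i = j$ is handled by $\theta$ itself: the key identity $y_j y_l = \lambda_{j,l} y_l y_j$ for $l<j$, which makes $\sigma_j^{(j)}$ a pure automorphism, is precisely what Haynal's homomorphism produces via the $q_j$-skew and iterativity properties. Property (II)(ii) follows analogously by a termwise expansion exploiting iterativity of $\{d_{j,n}\}$.

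For part (III), both $A^{(j+1)}$ and $A^{(j)}$ are noetherian domains, so $x_j=y_j$ is regular in each. In $A^{(j+1)}$ the Ore condition on $S_j$ holds because of the skew polynomial presentation obtained after reordering, while in $A^{(j)}$ it is immediate from (I) since $y_j$ normalises every $y_l$. The equality $A^{(j)}S_j^{-1} = A^{(j+1)}S_j^{-1}$ breaks into two inclusions. The forward inclusion is immediate from \eqref{yequation}. For the reverse, rearranging \eqref{yequation} gives the recursion $x_l = y_l - \sum_{n\geq 1} q_j^{n(n+1)/2}(q_j-1)^{-n} d_{j,n}\sigma_j^{-n}(x_l)\,x_j^{-n}$; substituting the right-hand side into itself, with termination guaranteed by local nilpotence of $\{d_{j,n}\}$, expresses each $x_l$ inside $A^{(j)}S_j^{-1}$. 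The principal obstacle throughout is the deletion identity $y_j y_l = \lambda_{j,l} y_l y_j$ for $l<j$, whose direct verification would be intricate; the decisive simplification is to outsource it to Haynal's Proposition 3.4, which was engineered precisely to deliver this relation.
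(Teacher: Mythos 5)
Your skeleton matches the paper's proof: both reorder $A^{(j+1)}$ via Lemma~\ref{mylemma} so $x_j$ is outermost, apply Haynal's deleting-derivations homomorphism, read off the presentation of $A^{(j)}$, and then reorder once more to put the indeterminates back in their original sequence. Parts (I) and (II) are handled in essentially the same way, though where you verify the commutation rules $\sigma_i^{(j)}(y_l)=\lambda_{i,l}y_l$ for $i>j>l$ by expanding the defining series for $y_l$ termwise, the paper instead transports everything through the isomorphism $f$ and reads these rules off the intermediate presentation~(\ref{cjeqn}); both routes are sound.

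The one substantive divergence is in part (III). The paper does not re-derive the Ore property or the equality of localisations: it invokes \cite[Theorem 3.7]{Haynal} to obtain the isomorphism $\widehat{A^{(j+1)}}[x_j^{\pm 1};\sigma'_j]\cong \widehat{A^{(j+1)}}[x_j;\sigma'_j,\delta'_j]S_j^{-1}$ at the outset (you cite only \cite[Proposition 3.4]{Haynal}, which gives the homomorphism but not the statement about localisations), and then cites \cite[Proposition 3.6]{Haynal} directly for the Ore condition and $A^{(j)}S_j^{-1}=A^{(j+1)}S_j^{-1}$. Your alternative is to rearrange \eqref{yequation} into a recursion for $x_l$ in terms of $y_l$ and lower data, and to iterate, appealing to local nilpotence for termination. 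This is the inverse-substitution argument Cauchon runs in the generic case, and it does work, but as written it is not complete: the coefficients $d_{j,n}\sigma_j^{-n}(x_l)$ are polynomials in $x_1,\dots,x_{j-1}$, so ``substituting the right-hand side into itself'' means substituting into every factor of every monomial, and you need a genuine double induction (on $l$ and on the $d_{j,\bullet}$-nilpotence index of the coefficients) to see that the process closes up inside $A^{(j)}S_j^{-1}$. The paper's citation to Haynal's already-packaged results is the more economical route and avoids rebuilding this machinery; your approach is correct in outline but would need that induction spelled out to be airtight.
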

\begin{proof}
By Hypothesis \ref{hyp2}, and the discussion thereafter, we can write $A^{(j+1)}$ as
\[A^{(j+1)}=\K[x_1]\ldots[x_j;\sigma_j,\delta_j][x_{j+1};\sigma_{j+1}^{(j+1)}]\ldots[x_N;\sigma_N^{(j+1)}].\]
Define
\[A^{(j+1)}_{j-1}:=\K[x_1][x_2;\sigma_2,\delta_2]\ldots [x_{j-1};\sigma_{j-1},\delta_{j-1}].\]
If $\delta_j=0$ then $A^{(j)}=A^{(j+1)}$ and the result is trivial. So, assuming that $\delta_j\neq0$ and applying Lemma~\ref{mylemma} to $A^{(j+1)}=A^{(j+1)}_{j-1}[x_j;\sigma_j,\delta_j][x_{j+1};\sigma_{j+1}^{(j+1)}]\ldots[x_N;\sigma_N^{(j+1)}]$ gives
\begin{equation}\label{eqnCj+1}
A^{(j+1)}=A^{(j+1)}_{j-1}[x_{j+1};\sigma_{j+1}^{(j+1)*}]\ldots[x_N;\sigma_N^{(j+1)*}][x_j;\sigma'_j,\delta'_j],
\end{equation}
where
\begin{enumerate}[(a)]
\item $\sigma_i^{(j+1)*}|_{A^{(j+1)}_{j-1}}=\sigma_i^{(j+1)}|_{A^{(j+1)}_{j-1}}$ for all $i \in \llbracket j+1, N \rrbracket$, and $\sigma_i^{(j+1)*}(x_l)=\sigma_i^{(j+1)}(x_l)=\lambda_{i,l}x_l$ for all $l \in \llbracket j+1, i-1 \rrbracket $;
\item $\sigma'_j|_{A^{(j+1)}_{j-1}}=\sigma_j$ and $\delta'_j|_{A^{(j+1)}_{j-1}}=\delta_j$;
\item $\sigma'_j(x_l)=\lambda_{j,l}x_l=\lambda_{l,j}^{-1}x_l$ and $\delta'_j(x_l)=0$ for all $l \in \llbracket j+1, N \rrbracket$.
\end{enumerate}
In particular, $\delta'_j$ extends to a h.$q_j$-s.$\sigma'_j$-d., $\{d'_{j,n}\}_{n=0}^{\infty}$, on $A^{(j+1)}_{j-1}\langle x_{j+1}^{\pm 1},\ldots, x_N^{\pm 1} \rangle$ where
\begin{align}
d'_{j,n}|_{A^{(j+1)}_{j-1}}&{}=d_{j,n} \quad (\forall \; n\geq 0),  \label{skewderivrestrict1}\\
d'_{j,n}(x_l)&{}=0 \qquad (\forall \; l \in \llbracket j+1, N \rrbracket \text{ and } n\geq 1). \label{skewderivrestrict2}
\end{align}
Define
\[\widehat{A^{(j+1)}}:=A^{(j+1)}_{j-1}[x_{j+1};\sigma_{j+1}^{(j+1)*}]\ldots[x_N;\sigma_N^{(j+1)*}]\]
so that equation (\ref{eqnCj+1}) becomes
\[A^{(j+1)}=\widehat{A^{(j+1)}}[x_j;\sigma'_j,\delta'_j]. \]
Applying \cite[Theorem 3.7]{Haynal} to $A^{(j+1)}$ yields the isomorphism,
\begin{align*}
f:\widehat{A^{(j+1)}}[x_j^{\pm 1};\sigma'_j] &{} ~ \longrightarrow ~ \widehat{A^{(j+1)}}[x_j;\sigma'_j,\delta'_j]S_j^{-1}\\
\widehat{A^{(j+1)}} ~ \ni ~ a &{} ~ \longmapsto ~ f(a)=\sum_{n=0}^{\infty} q_j^{\frac{n(n+1)}{2}}(q_j-1)^{-n}d'_{j,n} \circ (\sigma'_j)^{-n}(a)x_j^{-n} \\
x_j &{} ~ \longmapsto ~ x_j.
\end{align*}
Note that $f(x_l)=x_l$ for  all $l \in \llbracket j+1, N \rrbracket$ since, by (\ref{skewderivrestrict2}), $d'_{j,n}(x_l)=0$ for all $n\geq 1$.
If $l \in \llbracket 1, j-1 \rrbracket$, then $x_l$ and $(\sigma'_j)^{-n}(x_l)\in A^{(j+1)}_{j-1}$. Thus, by (\ref{skewderivrestrict1}) and (b) above, we can replace $\sigma'_j$ and $d'_{j,n}$ in $f(x_l)$ with $\sigma_j$ and $d_{j,n}$ to obtain
\[f(x_l)=\sum_{n=0}^{\infty} q_j^{\frac{n(n+1)}{2}}(q_j-1)^{-n}d_{j,n}\circ \sigma_j^{-n}(x_l)x_j^{-n}=y_l,\]
as defined in (\ref{yequation}).  Therefore, for any $l \in \llbracket 1, N \rrbracket$ we see that
\[f(x_l)=\begin{cases}
            x_l & l\geq j; \\
            y_l & l<j.
        \end{cases}\]
Hence the isomorphism $f$ takes $x_l$ to $y_l$ for all $l \in \llbracket 1, N \rrbracket$.

Using \cite[Theorem 3.7]{Haynal} we see that restricting $f$ to $\widehat{A^{(j+1)}}[x_j;\sigma'_j]$ gives the deleting derivation homomorphism as defined in \cite[Proposition 3.4]{Haynal} . Therefore
\begin{equation}\label{imageisom}
\Ima(f)\cong \widehat{A^{(j+1)}}[x_j;\sigma'_j],
\end{equation}
where $\Ima(f)=f(\widehat{A^{(j+1)}}[x_j;\sigma'_j])$ is the subalgebra of $\widehat{A^{(j+1)}}[x_j^{\pm 1};\sigma'_j, \delta'_j]$ generated by $x_j$ and $f(\widehat{A^{(j+1)}})$.  Since $f(\widehat{A^{(j+1)}})$ is generated by $\K$ and $\{y_l\mid l\neq j\}$, and since $x_j=y_j$, this simply tells us that
\[\Ima(f)=\K\langle y_1, \ldots , y_N\rangle = A^{(j)}.\]
Using (\ref{imageisom}) we see that
\[A^{(j)}=\Ima(f)\cong \K[x_1]\ldots[x_{j-1};\sigma_{j-1},\delta_{j-1}][x_{j+1};\sigma_{j+1}^{(j+1)*}]\ldots[x_N;\sigma_N^{(j+1)*}][x_j;\sigma'_j],\]
and therefore,
\begin{equation}\label{cjeqn}
A^{(j)}=\K[y_1]\ldots [y_{j-1};\sigma_{j-1},\delta_{j-1}][y_{j+1};\sigma_{j+1}^{(j+1)*}]\ldots [y_N;\sigma_N^{(j+1)*}][y_j;\sigma'_j].
\end{equation}
Finally we apply \cite[Proposition 3.6]{Haynal} to conclude that $S_j$ is a multiplicatively closed set of regular elements in both $A^{(j+1)}$ and $A^{(j)}$, satisfying the two-sided Ore condition, and that
\begin{align*}
\Ima(f)S_j^{-1} &{}= \widehat{A^{(j+1)}}[x_j;\sigma'_j,\delta'_j]S_j^{-1},\\
A^{(j)} S_j^{-1} &{}= A^{(j+1)}S_j^{-1}.
\end{align*}
Thus assertion (\ref{mainthmSj}) is proved.

The property $y_ly_j=\lambda_{l,j}y_jy_l$, along with the fact that $\lambda_{l,j}=\lambda_{j,l}^{-1}$, allows us to rearrange (\ref{cjeqn}) to obtain
\begin{equation}\label{cjrearranged}
A^{(j)}=\K[y_1]\ldots [y_{j-1};\sigma_{j-1},\delta_{j-1}][y_j;\sigma^{(j)}_j][y_{j+1};\sigma^{(j)}_{j+1}]\ldots [y_N;\sigma^{(j)}_N].
\end{equation}
Defining
\[A^{(j)}_{j-1}:=\K[y_1]\ldots [y_{j-1};\sigma_{j-1},\delta_{j-1}] \]
we see that $A_{j-1}^{(j)} \cong A_{j-1}$ and there is an isomorphism\begin{align}
A^{(j)}&{}=\K[y_1]\ldots [y_{j-1};\sigma_{j-1},\delta_{j-1}][y_j;\sigma^{(j)}_j][y_{j+1};\sigma^{(j)}_{j+1}]\ldots [y_N;\sigma^{(j)}_N] \nonumber\\
&{}\cong  \K[X_1]\ldots [X_{j-1};\sigma_{j-1},\delta_{j-1}][X_j;\sigma^{(j)}_j][X_{j+1};\sigma^{(j)}_{j+1}]\ldots [X_N;\sigma^{(j)}_N] \label{assertion1}
\end{align}
sending $y_l$ to $X_l$ for all $l \in \llbracket 1, N \rrbracket$, where the maps (as defined on suitable subalgebras of $A^{(j)}$) are as follows:
\begin{enumerate}[(a$'$)]
    \item $\sigma^{(j)}_j=\sigma'_j|_{A^{(j)}_{j-1}}=\sigma_j$;
    \item $\sigma_i^{(j)}|_{A^{(j)}_{j-1}}=\sigma_i^{(j+1)*}|_{A^{(j)}_{j-1}}=\sigma_i^{(j+1)}|_{A^{(j)}_{j-1}}$ for all $i \in \llbracket j+1, N \rrbracket$;
    \item $\sigma^{(j)}_i(y_l)=\lambda_{i,l}y_l$ for all $i \in \llbracket j+1, N \rrbracket$ and $l \in \llbracket 1, i-1 \rrbracket$.
\end{enumerate}
Using the isomorphism in (\ref{assertion1}) along with the observations (a$'$)--(c$'$) above we can prove assertion (II) for all $i \in \llbracket j, N \rrbracket$: Observation (a$'$) proves both parts of assertion (II) when $i=j$, since $\sigma_j$ satisfies assertion (II) by definition (see H\ref{hyp1}.\ref{hyp1.2} and H\ref{hyp1}.\ref{hyp1.7}). When $i\in \llbracket j+1, N \rrbracket$, observation (b$'$) proves (II)(ii), since $\sigma_i^{(j+1)}$ satisfies H\ref{hyp2}.\ref{hyp2.2}, and observation (c$'$) proves (II)(i).

\end{proof}

\begin{remark}
If $A$ is an algebra satisfying Hypothesis \ref{hyp1} then Hypothesis \ref{hyp2} is satisfied for $j=N+1$. Theorem \ref{mainthm} then implies that Hypothesis \ref{hyp2} is also satisfied for all $j \in \llbracket 2, N+1 \rrbracket$. 
\end{remark}

\begin{corollary}\label{maincorollary}
The algebra $A':=A^{(2)}$ is a quantum affine space. More precisely, by setting $T_i:=X_i^{(2)}$ for all $i\in \llbracket 1, N \rrbracket$ and  $\Lambda:=(\lambda_{i,j})\in M_N(\K)$ to be the multiplicatively antisymmetric matrix, we obtain:
\[ A'=\K_{\Lambda}[T_1,\ldots,T_N]. \]
\end{corollary}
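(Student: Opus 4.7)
The plan is to iterate Theorem \ref{mainthm} downward from $j=N$ to $j=2$. First I would verify the base case: the algebra $A^{(N+1)} = A$ satisfies Hypothesis \ref{hyp2} with $j = N$, as H\ref{hyp1}.\ref{hyp1.2} provides the diagonal action $\sigma_i(X_l) = \lambda_{i,l}X_l$ and H\ref{hyp1}.\ref{hyp1.7} supplies the commutation identity $\sigma_i \circ d_{l,n} = \lambda_{i,l}^n d_{l,n} \circ \sigma_i$ for $l < i$. As noted in the remark following Theorem \ref{mainthm}, each application of the theorem re-establishes Hypothesis \ref{hyp2} at the next lower level, so one may apply the theorem successively for $j = N, N-1, \ldots, 2$, producing the chain $A^{(N)}, A^{(N-1)}, \ldots, A^{(2)}$.

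At the terminal step, Theorem \ref{mainthm}(I) with $j = 2$ yields an isomorphism
\[
A^{(2)} \;\cong\; \K[X_1][X_2;\sigma_2^{(2)}][X_3;\sigma_3^{(2)}] \cdots [X_N;\sigma_N^{(2)}]
\]
sending $y_l \mapsto X_l$, in which every skew-derivation has been eliminated: the portion of the presentation that still carries skew-derivations runs from position $1$ to $j-1 = 1$, so none remains. Setting $T_i := X_i^{(2)}$ and invoking Theorem \ref{mainthm}(II)(i), each $\sigma_i^{(2)}$ acts diagonally as $\sigma_i^{(2)}(T_l) = \lambda_{i,l} T_l$ for $l < i$. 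Combined with the multiplicative antisymmetry of $\Lambda$ from H\ref{hyp1}.\ref{hyp1.3}, this is precisely the defining presentation of the quantum affine space $\K_\Lambda[T_1, \ldots, T_N]$.

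There is no substantive obstacle once Theorem \ref{mainthm} and its following remark are in hand; the corollary is essentially a packaging of the terminal step of the iteration. The one point I would take care to highlight is the disappearance of \emph{all} skew-derivations in $A^{(2)}$: by Theorem \ref{mainthm}(I) the remaining skew-derivations at stage $j$ live in the subalgebra $A_{j-1}$, so specializing to $j = 2$ leaves the subalgebra $\K[X_1]$, confirming that $A'$ is a pure quantum affine space rather than a mixed Ore extension. The scalars $\lambda_{i,l}$ appearing in the commutation matrix are exactly the original ones from Hypothesis \ref{hyp1}, since part (II)(i) preserves them unchanged at every stage.
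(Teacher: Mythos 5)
Your proof is correct and matches the argument the paper intends: the paper gives no separate proof of Corollary~\ref{maincorollary} beyond the Remark following Theorem~\ref{mainthm}, and your write-up simply spells out that iteration (verifying Hypothesis~\ref{hyp2} propagates downward, specializing Theorem~\ref{mainthm}(I)--(II) at $j=2$, and observing that the skew-derivation block is empty there). No gaps; this is exactly the intended reasoning.
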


\begin{remark}
For all $j\in \llbracket 1, N\rrbracket$, we say that $A^{(j+1)}$ is the algebra obtained from $A$ after $N-j$ steps of the deleting derivations algorithm.
\end{remark}

\subsection{Ring of fractions}
In order to be able to track the completely prime ideals along the deleting derivations algorithm we need the following two results regarding the division ring of fractions of the algebras $A^{(j)}$ at each step of the algorithm.  These results were proved in \cite[Subsection 3.3]{Cauchon} in the generic setting and can be applied directly to our setting thanks to the above results.

Let $\Sigma$ be the multiplicatively closed set in $A'$ generated by the elements $T_1,\ldots, T_N$. For $j \in \llbracket 2, N \rrbracket$, define sets $\Sigma_j$:
\begin{align*}
\Sigma_2&{}:=\Sigma,\\
\Sigma_{j+1}&{}=A^{(j+1)}\cap \Sigma_j \quad \text{ for } j\in \llbracket 2, N \rrbracket.
\end{align*}
\begin{proposition}\label{propconfusedalgs}
For all $j\in \llbracket 2, N+1 \rrbracket$ the following are true:
\begin{enumerate}[(i)]
\item \label{confusedalg1}  $\Sigma_j$ is a multiplicatively closed set of regular elements in $A^{(j)}$ containing $X_{j-1}^{(j)},\ldots, X_N^{(j)}$;
\item \label{confusedalg2} $\Sigma_j$ satisfies the two-sided Ore condition in $A^{(j)}$;
\item \label{confusedalg3} The algebras $A^{(j)}\Sigma_j^{-1} \subset \Frac(A)$ are all equal.
\end{enumerate}
\end{proposition}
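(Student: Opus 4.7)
The plan is induction on $j$ from $j=2$ up to $j=N+1$, using Theorem~\ref{mainthm}(\ref{mainthmSj}) as the bridge between consecutive algebras. The base case $j=2$ is immediate from Corollary~\ref{maincorollary}: $A^{(2)} = \K_{\Lambda}[T_1, \ldots, T_N]$ is a quantum affine space whose generators are normal, so $\Sigma_2 = \Sigma$ is a multiplicatively closed set of regular elements satisfying the two-sided Ore condition, and assertion (iii) is vacuous.

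For the inductive step, suppose (i)--(iii) hold at $j$ and write $x_l := X_l^{(j+1)}$, $y_l := X_l^{(j)}$ as in Theorem~\ref{mainthm}. Two structural facts drive everything: the identity $A^{(j+1)} S_j^{-1} = A^{(j)} S_j^{-1}$ from Theorem~\ref{mainthm}(\ref{mainthmSj}), with $S_j = \{x_j^n\}_{n \geq 0}$; and formula~\eqref{yequation}, which together with $x_l = y_l$ for $l \geq j$ exhibits every generator of $A^{(j)}$ as an element of $A^{(j+1)}[x_j^{-1}]$. These yield the two chains
\[
A^{(j+1)} \;\subseteq\; A^{(j+1)} S_j^{-1} = A^{(j)} S_j^{-1} \;\subseteq\; A^{(j)} \Sigma_j^{-1}
\quad\text{and}\quad
A^{(j)} \;\subseteq\; A^{(j+1)}[x_j^{-1}] \;\subseteq\; A^{(j+1)} \Sigma_{j+1}^{-1}.
\]
Item~(i) follows at once: $\Sigma_{j+1} = A^{(j+1)} \cap \Sigma_j$ is multiplicatively closed, its elements inherit regularity from $A^{(j)}\Sigma_j^{-1}$, and $X_j^{(j+1)}, \ldots, X_N^{(j+1)}$ lie in $\Sigma_{j+1}$ because they coincide with $X_j^{(j)}, \ldots, X_N^{(j)} \in \Sigma_j$ and sit in $A^{(j+1)}$ by construction.

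The main obstacle is the Ore condition (ii); the strategy is to lift the problem to $A^{(j)}$, apply the inductive Ore condition there, and then clear denominators back down into $A^{(j+1)}$. Given $a \in A^{(j+1)}$ and $s \in \Sigma_{j+1}$, the first chain provides $m \geq 0$ with $a\, x_j^m \in A^{(j)}$; the inductive Ore condition of $\Sigma_j$ in $A^{(j)}$ then furnishes $c' \in A^{(j)}$ and $t \in \Sigma_j$ with $(a x_j^m)\, t = s\, c'$; finally, multiplying on the right by a sufficiently large power $x_j^P$ pushes both $t$ and $c'$ simultaneously into $A^{(j+1)}$ (using $A^{(j)} \subseteq A^{(j+1)} S_j^{-1}$), yielding $a\, s' = s\, a'$ with $s' = x_j^m\, t\, x_j^P \in \Sigma_{j+1}$ and $a' = c'\, x_j^P \in A^{(j+1)}$. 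The symmetric (left) Ore condition is analogous. This is essentially Cauchon's argument in \cite[Subsection 3.3]{Cauchon}, which transports verbatim once Theorem~\ref{mainthm}(\ref{mainthmSj}) is available.

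Item~(iii) then follows easily: the inclusion $A^{(j+1)} \Sigma_{j+1}^{-1} \subseteq A^{(j)} \Sigma_j^{-1}$ is clear from the first chain together with $\Sigma_{j+1} \subseteq \Sigma_j$. For the reverse, the second chain shows $A^{(j)} \subseteq A^{(j+1)} \Sigma_{j+1}^{-1}$, and it remains to invert elements of $\Sigma_j$ inside $A^{(j+1)} \Sigma_{j+1}^{-1}$: given $s \in \Sigma_j$, apply the Ore condition from (ii) to write $s = a\, t^{-1}$ with $a \in A^{(j+1)}$ and $t \in \Sigma_{j+1}$; then $a = s\, t \in A^{(j+1)} \cap \Sigma_j = \Sigma_{j+1}$, so $s^{-1} = t\, a^{-1} \in A^{(j+1)} \Sigma_{j+1}^{-1}$, closing the induction.
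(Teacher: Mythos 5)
Your proposal is correct and follows essentially the same route as the paper: induction on $j$, with the chain of inclusions $A^{(j+1)} \subseteq A^{(j+1)}S_j^{-1} = A^{(j)}S_j^{-1} \subseteq A^{(j)}\Sigma_j^{-1}$ from Theorem~\ref{mainthm}(\ref{mainthmSj}) doing the heavy lifting, and denominators cleared via powers of $x_j$. Your verification of the Ore condition in (ii) is a slightly more direct version of the paper's rewriting argument (one small label slip: when inverting $s \in \Sigma_j$ for part (iii), what you actually invoke is the inclusion $A^{(j)} \subseteq A^{(j+1)}\Sigma_{j+1}^{-1}$ from your second chain, not the Ore condition of (ii)), but the logic goes through unchanged.
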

\begin{proof}
We proceed by induction on $j$.  When $j=2$, statements (\ref{confusedalg1}) and (\ref{confusedalg2}) are trivially true from the definition of $\Sigma$ and the fact that the generators (and monomials in these generators) of a quantum affine space are regular and normal.
Let $j\in \llbracket 2, N\rrbracket$ and suppose statements (\ref{confusedalg1}) and (\ref{confusedalg2}) hold for $j$. We will show they also hold for $j+1$ and that $A^{(j)}\Sigma_j^{-1}=A^{(j+1)}\Sigma_{j+1}^{-1}$.

Recall the notation from the previous section: $\{x_i\}_i = \{X_i^{(j+1)}\}_i$ and $\{y_i\}_i = \{X_i^{(j)}\}_i$ where $x_i=y_i$ for all $i\geq j$. By the induction hypothesis, $\Sigma_j$ is a multiplicatively closed set of regular elements in $A^{(j)}$ containing $y_{j-1},\ldots, y_N$. Therefore $\Sigma_{j+1}=A^{(j)} \cap \Sigma_j$ is a multiplicatively closed set of regular elements of $A^{(j+1)}$ containing $y_j=x_j, \; \ldots, \; y_N=x_N$, thus proving statement (\ref{confusedalg1}).

Recall the set $S_j=\{x_j^n \mid n\in \N\}=\{y_j^n \mid n\in \N\}\subset \Sigma_j \cap \Sigma_{j+1}$ and use Theorem \ref{mainthm}(\ref{mainthmSj}) to obtain the inclusions
\begin{align}\label{eqnconfusedalg}
A^{(j+1)} \subset A^{(j+1)}S_j^{-1}=A^{(j)}S_j^{-1}\subset A^{(j)}\Sigma_j^{-1}.
\end{align}
Since $\Sigma_{j+1}\subset \Sigma_j$ then  $\Sigma_{j+1}$ must be invertible in $A^{(j)}\Sigma_j^{-1}$. We use this to show that an element $a\in A^{(j)}\Sigma_j^{-1}$ can be rewritten as an element in $A^{(j+1)}\Sigma_{j+1}^{-1}$. Write $a=yu^{-1}$, with $y\in A^{(j)}$ and $u\in \Sigma_j$.  Since
\[\Sigma_j \subset A^{(j)} \subset A^{(j)}S_j^{-1} = A^{(j+1)}S_j^{-1}, \]
we can write $u=vs_1^{-1}$ and $y=xs_2^{-1}$, with $v, x \in A^{(j+1)}$ and $s_1, s_2 \in S_j$. Then $a$ becomes
\[a=xs_2^{-1}(vs_1^{-1})^{-1}=xs_2^{-1}s_1v^{-1}=xs_1s_2^{-1}v^{-1}=xs_1(vs_2)^{-1}.\]
Observe that $vs_2=(vs_1^{-1})s_1s_2=us_1s_2 \in \Sigma_j$ since $u\in \Sigma_j$ and $s_1,s_2\in S_j \subset \Sigma_j$.  Also, $v\in A^{(j+1)}$ and $s_2\in S_j \subset A^{(j+1)}$, therefore $vs_2\in A^{(j+1)}\cap \Sigma_j = \Sigma_{j+1}$.  Similarly, $xs_1\in A^{(j+1)}$ so we can write
\[a=bc^{-1} \in A^{(j+1)}\Sigma_{j+1}^{-1},\]
where $b=xs_1\in A^{(j+1)}$ and $c=vs_2\in \Sigma_{j+1}$. From the inductive hypothesis we know that $A^{(j)}\Sigma_j^{-1}=\Sigma_j^{-1}A^{(j)}$, so if $a\in A^{(j)}\Sigma_j^{-1}$ then it must also be true that $a\in \Sigma_j^{-1}A^{(j)}$. We also know from Theorem \ref{mainthm}(\ref{mainthmSj}) that $S_j$ is an Ore set in $A^{(j)}$ and $A^{(j+1)}$, thus $A^{(j)}S_j^{-1}=S_j^{-1}A^{(j)}$ and $A^{(j+1)}S_j^{-1}=S_j^{-1}A^{(j+1)}$. Using these results we can follow a similar method to before to rewrite $a\in A^{(j)}\Sigma_j^{-1}=\Sigma_j^{-1}A^{(j)}$ as
\[ a=c'^{-1}b' \in \Sigma_{j+1}^{-1}A^{(j+1)}, \]
with $c'\in \Sigma_{j+1}$ and $b'\in A^{(j+1)}$.

If we can prove that $\Sigma_{j+1}$ is a two-sided Ore set in $A^{(j+1)}$ then the working above implies that $A^{(j)}\Sigma_j^{-1}\subseteq A^{(j+1)}\Sigma_{j+1}^{-1}$. Furthermore, $A^{(j+1)}\subset A^{(j)}\Sigma_j^{-1}$ and $\Sigma_{j+1}\subset \Sigma_j$ so we also have $A^{(j+1)}\Sigma_{j+1}^{-1}\subseteq A^{(j)}\Sigma_j^{-1}$. Hence statement (\ref{confusedalg3}) is true if we can prove that statement (\ref{confusedalg2}) holds.

From the inclusion $A^{(j+1)}\Sigma_{j+1}^{-1}\subseteq A^{(j)}\Sigma_j^{-1}$ we can write any $a=bc^{-1}\in A^{(j+1)}\Sigma_{j+1}^{-1}$ as $a\in A^{(j)} \Sigma_j^{-1}$ and, applying the above working, we see that there exist $c'\in \Sigma_{j+1}$ and $b'\in A^{(j+1)}$ such that $a=c'^{-1}b'\in \Sigma_{j+1}^{-1}A^{(j+1)}$. This verifies the two-sided Ore condition on $\Sigma_{j+1}\subset A^{(j+1)}$ necessary for proving statement (\ref{confusedalg2}) and, by the comment earlier, statement (\ref{confusedalg3}). 
\end{proof}

From the above proposition it is clear that:
\begin{theorem} \label{thmPIdegs}
\begin{enumerate}[(i)]
\item There exists a multiplicatively closed set of regular elements $S\subseteq A$ such that $AS^{-1}=A'\Sigma^{-1}=\K_{\Lambda}[T_1^{\pm 1}, \ldots, T_N^{\pm 1}]$. 
\item $\Frac(A^{(j)})=\Frac(A)$ for all $j\in \llbracket 2, N+1\rrbracket$ and, in particular, $\Frac(A)=\Frac(A')$.
\item $A$ is a PI algebra if and only if the $\lambda_{i,j}$ are roots of unity for all $i, j \in \llbracket 1, N \rrbracket$ and, in this case, $\PI(A)=\PI(A')$.
\end{enumerate}
\end{theorem}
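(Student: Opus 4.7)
The plan is to extract all three assertions directly from Proposition~\ref{propconfusedalgs} by chaining its part~(iii) across $j$, and then to combine with two standard facts about quantum tori and PI degrees under localization.

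First, set $S := \Sigma_{N+1}$. Since $A = A^{(N+1)}$, Proposition~\ref{propconfusedalgs}(i)--(ii) tells us that $S$ is a multiplicatively closed set of regular elements of $A$ satisfying the two-sided Ore condition. Chaining the equalities in Proposition~\ref{propconfusedalgs}(iii) from $j = N$ down to $j = 2$ gives
\[
AS^{-1} \,=\, A^{(N+1)}\Sigma_{N+1}^{-1} \,=\, A^{(2)}\Sigma_2^{-1} \,=\, A'\Sigma^{-1}.
\]
Since $\Sigma$ is generated by $T_1,\ldots,T_N$ and $A' = \K_\Lambda[T_1,\ldots,T_N]$ by Corollary~\ref{maincorollary}, this common algebra is the quantum torus $\K_\Lambda[T_1^{\pm1},\ldots,T_N^{\pm1}]$, establishing~(i). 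Part~(ii) then follows from the inclusions $A^{(j)} \subseteq A^{(j)}\Sigma_j^{-1} \subseteq \Frac(A^{(j)})$: the middle terms all coincide inside $\Frac(A)$ by Proposition~\ref{propconfusedalgs}(iii), so taking fraction fields yields $\Frac(A^{(j)}) = \Frac(A)$ for every $j\in\llbracket 2, N+1\rrbracket$.

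For~(iii), I would appeal to two standard results: (a) the quantum torus $\K_\Lambda[T_1^{\pm1},\ldots,T_N^{\pm1}]$ is PI if and only if every $\lambda_{i,j}$ is a root of unity (this is the content underlying \cite[Lemma~2.4]{BellLaunoisRogers}); and (b) PI degree is preserved under Ore localization at a set of regular elements in a prime Noetherian algebra (see \cite[I.13]{BrownGoodearl}). Since $A$ is a Noetherian domain by the Skew Hilbert Basis Theorem noted in Section~\ref{section1}, $S \subseteq A$ and $\Sigma \subseteq A'$ satisfy the hypotheses of~(b). Because PI identities pass both to subalgebras and to Ore localizations, $A$ is PI iff $AS^{-1}$ is PI, which by~(a) and part~(i) happens iff all the $\lambda_{i,j}$ are roots of unity. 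When this is the case, applying~(b) at both ends yields
\[
\PI(A) \,=\, \PI(AS^{-1}) \,=\, \PI(A'\Sigma^{-1}) \,=\, \PI(A').
\]
The only mildly delicate point is choosing the precise references for facts~(a) and~(b); beyond that, the theorem is essentially bookkeeping on top of Proposition~\ref{propconfusedalgs}.
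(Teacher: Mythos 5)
Your proof is correct and follows essentially the same outline as the paper for parts (i) and (ii): set $S = \Sigma_{N+1}$, chain Proposition~\ref{propconfusedalgs}(iii), and read off both conclusions from the common localisation. For part (iii) you take a mildly different but equally valid route: the paper establishes the ``$A$ is PI iff all $\lambda_{i,j}$ are roots of unity'' equivalence by citing two independent results — \cite[Proposition~7.1]{DeConciniProcesi} for $A'$ and \cite[Corollary~4.7]{Haynal} for $A$ — and then concludes equality of PI degrees from equality of total rings of fractions (an implicit Posner argument). You instead derive the equivalence for $A$ from the quantum-torus characterization together with the facts that PI identities pass to subalgebras and (via Posner, since $AS^{-1}\subseteq\Frac(A)$) to Ore localizations, which lets you avoid invoking Haynal's Corollary~4.7 at all. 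Both versions then finish with $\PI(A)=\PI(AS^{-1})=\PI(A'\Sigma^{-1})=\PI(A')$. Your version is a bit more self-contained; the paper's is shorter because it delegates the equivalence to the literature. Either way, the one unstated ingredient in both is Posner's theorem (that the PI degree of a prime Noetherian PI ring equals that of its quotient division ring), which is what actually justifies passing the PI degree across the localizations.
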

\begin{proof}
Let $S=\Sigma_{N+1}$. Proposition \ref{propconfusedalgs}(\ref{confusedalg3}) shows  that $AS^{-1}=A'\Sigma^{-1}$ and that all the algebras $A^{(j)}$ have a common localisation, thus proving (i) and (ii). \cite[Proposition 7.1]{DeConciniProcesi} states that $A'$ is PI if and only if the $\lambda_{i,j}$ are roots of unity for all $1\leq i, j \leq N$ and \cite[Corollary 4.7]{Haynal} states the same result for $A$. Therefore they have the same PI degree since they have equal total rings of fractions. This proves (iii).
\end{proof}

\section{Deleting Derivations Algorithm on completely prime quotients}
In this section we set up a canonical embedding $\psi:\CSpec(A) \rightarrow \CSpec(A')$, from the completely prime spectrum of $A$ into the completely prime spectrum of $A'$, and use this to extend the algorithm of the previous section to quotient algebras. We also define criteria for some ideal $Q\in \CSpec(A')$ to lie in the image of $\psi$.

Many of the results of this section are analogues to those found in the generic setting \cite[Sections 4 and 5]{Cauchon} and their proofs follow in almost the same way, thanks to the results of the previous section. The proofs concerning membership criteria for the canonical embedding in Subsection~\ref{sectionProperties} closely mirror those in the Poisson setting, which can be found in \cite[Sections 2.3 and 2.4]{LaunoisLecoutre}.

\subsection{Canonical Embedding and partition of completely prime spectra}
\subsubsection{Embedding}
Let $A$ satisfy Hypothesis~\ref{hyp1} and define the following sets, which we endow with the induced Zariski topology:
\begin{align*}
\mathcal{P}_j^0(A^{(j)}) &{}:=\{P\in \CSpec(A^{(j)}) \mid y_j\notin P\},\\
\mathcal{P}_j^1(A^{(j)})&{}:=\{P\in \CSpec(A^{(j)}) \mid y_j\in P\}, \\
\mathcal{P}_j^0(A^{(j+1)})&{}:=\{P\in \CSpec(A^{(j+1)})\mid x_j\notin P\},\\
\mathcal{P}_j^1(A^{(j+1)})&{}:=\{P\in \CSpec(A^{(j+1)})\mid  x_j\in P\}.
\end{align*}

In the following results, by the term \emph{increasing} we mean that the homeomorphism is order-preserving with respect to inclusion of ideals. By \emph{bi-increasing} we mean that both the homeomorphism and its inverse are increasing homeomorphisms.

\begin{lemma} \label{lemBiIncHomP0}
There is an increasing homeomorphism $\psi^0_j:\mathcal{P}^0_j(A^{(j+1)}) \rightarrow \mathcal{P}^0_j(A^{(j)})$ given by $\psi^0_j(P):=PS_j^{-1}\cap A^{(j)}$.  Its inverse is defined by $(\psi^0_j)^{-1}(Q):=QS_j^{-1}\cap A^{(j+1)}$ and is also increasing. Hence $\psi^0_j$ is bi-increasing.
\end{lemma}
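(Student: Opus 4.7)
The plan is to invoke the standard Ore localisation correspondence for prime ideals at both levels and then use Theorem \ref{mainthm}(\ref{mainthmSj}) to identify the two localisations. Recall that for any multiplicative Ore set $T$ of regular elements in a ring $R$, the assignments $P \mapsto PT^{-1}$ and $\mathfrak{q} \mapsto \mathfrak{q}\cap R$ are mutually inverse, inclusion-preserving bijections between the prime ideals of $R$ disjoint from $T$ and the prime ideals of $RT^{-1}$, and both send completely prime ideals to completely prime ideals (since localising a domain at a regular Ore set yields a domain, and any subring of a domain is a domain). Moreover, for a prime $P$ of $R$, we have $P\cap T=\emptyset$ iff $P$ contains no element of $T$; applied to $T=S_j=\{x_j^n\}=\{y_j^n\}$ and $R=A^{(j+1)}$ (resp.\ $R=A^{(j)}$), this says exactly that the condition $P\cap S_j=\emptyset$ is equivalent to $x_j\notin P$ (resp.\ $y_j\notin P$).

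Applying this to $(A^{(j+1)},S_j)$ I obtain a bi-increasing bijection
\[
\mathcal{P}^0_j(A^{(j+1)}) \;\longleftrightarrow\; \CSpec\bigl(A^{(j+1)}S_j^{-1}\bigr),\qquad P\longmapsto PS_j^{-1},
\]
and analogously for $(A^{(j)},S_j)$. By Theorem \ref{mainthm}(\ref{mainthmSj}) we have $A^{(j+1)}S_j^{-1}=A^{(j)}S_j^{-1}$, so the target spectra coincide; composing the first bijection with the inverse of the second produces a bi-increasing bijection
\[
\psi^0_j : \mathcal{P}^0_j(A^{(j+1)}) \longrightarrow \mathcal{P}^0_j(A^{(j)}), \qquad P \longmapsto PS_j^{-1}\cap A^{(j)},
\]
whose inverse is $Q\mapsto QS_j^{-1}\cap A^{(j+1)}$, as asserted. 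The fact that both maps are order-preserving is immediate: extension to a localisation and contraction of an ideal both respect containment.

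It remains to verify that $\psi^0_j$ and its inverse are continuous for the induced Zariski topology. A basic closed set in $\CSpec(A^{(j+1)})$ is of the form $V(I):=\{P\mid I\subseteq P\}$ for an ideal $I\lhd A^{(j+1)}$, and its intersection with $\mathcal{P}^0_j(A^{(j+1)})$ gives a typical closed set in the subspace. Using the bijection with $\CSpec(A^{(j+1)}S_j^{-1})$, one checks that $\psi^0_j\bigl(V(I)\cap \mathcal{P}^0_j(A^{(j+1)})\bigr)=V(IS_j^{-1}\cap A^{(j)})\cap \mathcal{P}^0_j(A^{(j)})$, so the image of a closed set is closed; by symmetry the same holds for the inverse, giving the homeomorphism.

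The only mildly delicate point is confirming that the bijection restricts cleanly to \emph{completely} prime ideals; this reduces to the observation above that both extension and contraction along the Ore localisation of a noetherian domain-quotient remain domains, so complete primality is preserved in both directions. Beyond this, the proof is a routine repackaging of the Ore localisation correspondence combined with the identification $A^{(j+1)}S_j^{-1}=A^{(j)}S_j^{-1}$ supplied by the main theorem of the previous section.
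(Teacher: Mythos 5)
Your proof is correct and takes essentially the same approach as the paper: both invoke the Ore localisation extension/contraction correspondence to identify $\mathcal{P}^0_j(A^{(j+1)})$ and $\mathcal{P}^0_j(A^{(j)})$ with $\CSpec(A^{(j+1)}S_j^{-1})$ and $\CSpec(A^{(j)}S_j^{-1})$ respectively, then use Theorem \ref{mainthm}(\ref{mainthmSj}) to identify the two localisations and compose. The additional detail you supply on continuity and preservation of complete primality is a fleshing-out of what the paper treats as standard.
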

\begin{proof}
Extension and contraction maps provide bi-increasing inverse homeomorphisms between $\CSpec(A^{(j)}S_j^{-1})$ and $\mathcal{P}_j^0(A^{(j)})$. Similarly, we obtain a bi-increasing homeomorphism between $\mathcal{P}_j^{(0)}(A^{(j+1)})$ and $\CSpec(A^{(j+1)}S_j^{-1})$. Since $A^{(j)}S_j^{-1} = A^{(j+1)}S_j^{-1}$ then their completely prime spectra, as topological spaces, are equal. Therefore the two bi-increasing homeomorphisms defined here give rise to the bi-increasing homeomorphism in the statement of this lemma.
\end{proof}

Next we turn our attention to the sets $\mathcal{P}^1_j(A^{(j)})$ and $\mathcal{P}^1_j(A^{(j+1)})$.
\begin{lemma}\label{lemMapg_j}
There is a surjective algebra homomorphism $g_j:A^{(j)} \rightarrow A^{(j+1)}/\langle x_j \rangle$ which takes $y_i\mapsto x_i+\langle x_j \rangle$, for all $1\leq i \leq N$, where $x_i+\langle x_j \rangle$ is the canonical image of $x_i$ in $A^{(j+1)}/\langle x_j \rangle $.
\end{lemma}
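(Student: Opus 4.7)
The plan is to build $g_j$ by invoking the universal property of Ore extensions iteratively, using the iterated Ore extension presentation of $A^{(j)}$ provided by Theorem \ref{mainthm}. Let $\pi: A^{(j+1)} \to B := A^{(j+1)}/\langle x_j \rangle$ denote the canonical projection. Starting from the structure map $\K \to B$, for each $i \in \llbracket 1, N \rrbracket$ I would extend a partial homomorphism $g_j^{(i-1)}: A^{(j)}_{i-1} := \K\langle y_1, \ldots, y_{i-1}\rangle \to B$ to $g_j^{(i)}: A^{(j)}_i \to B$ by assigning $y_i \mapsto \pi(x_i)$. For the universal property to apply, the element $\pi(x_i)$ must satisfy the appropriate twisted commutation rule with $g_j^{(i-1)}(A^{(j)}_{i-1})$; it suffices to verify this on algebra generators, since a routine computation using the skew-derivation identity shows that the set of $a \in A^{(j)}_{i-1}$ where the rule holds forms a $\K$-subalgebra.

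Three cases arise according to the form of the $i$-th Ore extension in $A^{(j)}$. For $i \in \llbracket 2, j-1 \rrbracket$, this extension is $[y_i; \sigma_i, \delta_i]$ with the same pair $(\sigma_i, \delta_i)$ appearing in the $i$-th extension of $A^{(j+1)}$; under the identification $y_l \leftrightarrow x_l$ of $A^{(j)}_{i-1}$ with $A^{(j+1)}_{i-1}$, the required compatibility reduces to applying $\pi$ to the Ore relation $x_i a = \sigma_i(a) x_i + \delta_i(a)$ that already holds in $A^{(j+1)}$. For $i = j$, the extension is $[y_j; \sigma_j^{(j)}]$ with no skew-derivation, and the compatibility holds trivially because $\pi(x_j) = 0$, so both sides of the $\sigma_j^{(j)}$-twist vanish. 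For $i \in \llbracket j+1, N \rrbracket$, the extension is $[y_i; \sigma_i^{(j)}]$ with $\sigma_i^{(j)}(y_l) = \lambda_{i,l} y_l$ for $l < i$ (by Theorem \ref{mainthm}(II)); on generators, the required identity $\pi(x_i) \pi(x_l) = \lambda_{i,l} \pi(x_l) \pi(x_i)$ follows from the commutation $x_i x_l = \lambda_{i,l} x_l x_i$ in $A^{(j+1)}$, which is a consequence of H\ref{hyp2}.\ref{hyp2.2}.

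After $N$ inductive steps, $g_j := g_j^{(N)}: A^{(j)} \to B$ is the desired homomorphism, and surjectivity is immediate since its image contains the algebra generators $\pi(x_1), \ldots, \pi(x_N)$ of $B$. The main conceptual point is the change of Ore extension structure at $i = j$: in $A^{(j+1)}$ this step is twisted by the nonzero skew-derivation $\delta_j$, while in $A^{(j)}$ the corresponding step has no $\delta$. Taking the quotient by $\langle x_j \rangle$ reconciles this discrepancy cleanly, since $\pi(x_j) = 0$ kills any $\delta_j$-contribution; beyond that, the proof amounts to routine bookkeeping to align the Ore extension data across the two presentations.
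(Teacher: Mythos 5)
Your proof is correct and takes essentially the same route as the paper: both build $g_j$ by feeding the iterated Ore extension presentation of $A^{(j)}$ (from Theorem~\ref{mainthm}) into the universal property of Ore extensions, and both hinge on the observation that the only structural discrepancy between the two presentations occurs at index $j$, where it is reconciled because $\pi(x_j)=0$ annihilates any $\delta_j$-contribution. The paper compresses the first $j-1$ steps into a single isomorphism $\K\langle y_1,\dots,y_{j-1}\rangle \to \K\langle x_1,\dots,x_{j-1}\rangle$ composed with $\pi$ and then checks the remaining relations; you unwind the same construction one generator at a time with an explicit three-case verification, which makes the ``it suffices to check on generators'' step and the commutation check for $i>j$ more transparent but is not a genuinely different argument.
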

\begin{proof}
By Theorem \ref{mainthm} we have
\begin{align*}
A^{(j+1)} &{}:= \K\langle x_1, \ldots, x_N\rangle \cong \K[X_1]\cdots [X_j;\sigma_j,\delta_j][X_{j+1};\sigma^{(j+1)}_{j+1}]\cdots [X_N;\sigma^{(j+1)}_N],\\
A^{(j)} &{}:= \K\langle y_1, \ldots, y_N\rangle \cong \K[X_1]\cdots [X_{j-1};\sigma_{j-1},\delta_{j-1}][X_j;\sigma^{(j)}_j]\cdots [X_N;\sigma^{(j)}_N].
\end{align*}
Restricting these algebras to $R:=\K\langle x_1, \ldots, x_{j-1}\rangle$ and $S:=\K\langle y_1, \ldots, y_{j-1}\rangle$ we see that there is an isomorphism $S\rightarrow R$ sending $y_i\mapsto x_i$ for all $i\in \llbracket 1, j-1 \rrbracket$. Since $R\subseteq A^{(j+1)}$ we can compose this isomorphism with the natural surjection $A^{(j+1)}\rightarrow A^{(j+1)}/\langle x_j \rangle$ to obtain the algebra homomorphism $f:S\rightarrow A^{(j+1)}/\langle x_j \rangle$, sending $y_i\mapsto x_i+\langle x_j \rangle$ for all  $i\in \llbracket 1, j-1 \rrbracket$.  Using the  the commutation rules for $A^{(j+1)}$ and the fact that $x_j+\langle x_j \rangle=0$, we see that $(x_j+\langle x_j \rangle)(x_i+\langle x_j \rangle) = \lambda_{j,i} (x_i+\langle x_j \rangle)(x_j+\langle x_j \rangle)=0$ for all $i\in \llbracket 1, j-1 \rrbracket$. The relations on $x_i+\langle x_j \rangle\in A^{(j+1)}/\langle x_j \rangle$ therefore agree with those on $y_i\in S[y_j;\sigma^{(j)}_j]\cdots [y_N;\sigma^{(j)}_N]=A^{(j)}$, for all $i\in \llbracket 1, N \rrbracket$. Applying the universal property of Ore extensions allows us to conclude the construction of the surjective homomorphism $g_j$.
\end{proof}

\begin{lemma}\label{lemPsi1}
There is an increasing injective map $\psi_j^1:\mathcal{P}^1_j(A^{(j+1)})\rightarrow \mathcal{P}^1_j(A^{(j)})$ taking $P\mapsto \psi_j^1(P):=g_j^{-1}(P/\langle x_j \rangle)$, where $P/\langle x_j \rangle$ denotes the canonical image of $P$ in $A^{(j+1)}/\langle x_j \rangle$, which induces a bi-increasing homeomorphism between $\mathcal{P}_j^1(A^{(j+1)})$ and the image $\psi_j^1(\mathcal{P}_j^1(A^{(j+1)}))$.
\end{lemma}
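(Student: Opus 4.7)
The plan is to realise $\psi^1_j$ as the composition of two standard bi-increasing homeomorphisms arising from correspondence theorems for quotient algebras, and then chase properties through the composition.

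First I would verify that $\psi^1_j$ is well-defined and lands in $\mathcal{P}^1_j(A^{(j)})$. For $P \in \mathcal{P}^1_j(A^{(j+1)})$, the quotient $P/\langle x_j\rangle$ is a completely prime ideal of $A^{(j+1)}/\langle x_j\rangle$ by the usual correspondence theorem. Since $g_j$ is a surjective algebra homomorphism by Lemma~\ref{lemMapg_j}, the preimage $g_j^{-1}(P/\langle x_j\rangle)$ is a completely prime ideal of $A^{(j)}$. As $g_j(y_j) = x_j + \langle x_j \rangle = 0 \in P/\langle x_j\rangle$, we have $y_j \in \psi^1_j(P)$, placing $\psi^1_j(P)$ in $\mathcal{P}^1_j(A^{(j)})$.

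Next I would factor $\psi^1_j = \phi_2 \circ \phi_1$ where $\phi_1 : \mathcal{P}^1_j(A^{(j+1)}) \longrightarrow \CSpec(A^{(j+1)}/\langle x_j\rangle)$ is the canonical bi-increasing homeomorphism $P \mapsto P/\langle x_j\rangle$ (ideals of the quotient correspond to ideals of $A^{(j+1)}$ containing $\langle x_j \rangle$, which is exactly the data recorded by $\mathcal{P}^1_j(A^{(j+1)})$), and $\phi_2 : \CSpec(A^{(j+1)}/\langle x_j\rangle) \longrightarrow \{Q \in \CSpec(A^{(j)}) : Q \supseteq \ker g_j\}$ is the bi-increasing homeomorphism induced by the surjection $g_j$, sending $J$ to $g_j^{-1}(J)$. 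Both $\phi_1$ and $\phi_2$ are standard instances of the ideal correspondence theorem for surjective ring homomorphisms, they both preserve and reflect inclusion, and they are Zariski-bicontinuous in the subspace topology. Composing them yields an increasing injective map $\psi^1_j$ whose inverse on its image (which factors as $\phi_1^{-1} \circ \phi_2^{-1}$, i.e.\ $Q \mapsto$ the unique $P$ with $P/\langle x_j\rangle = g_j(Q)$) is also increasing and continuous, giving the claimed bi-increasing homeomorphism onto the image.

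Finally, I would record the explicit description of the image: $\psi^1_j(\mathcal{P}^1_j(A^{(j+1)})) = \{Q \in \CSpec(A^{(j)}) : Q \supseteq \ker g_j\}$, which is generally a proper subset of $\mathcal{P}^1_j(A^{(j)})$ because $\ker g_j$ may be strictly larger than the ideal $\langle y_j\rangle$ of $A^{(j)}$. This is precisely why the statement only asserts a homeomorphism onto the image rather than onto all of $\mathcal{P}^1_j(A^{(j)})$. I do not anticipate a real obstacle in this proof: every ingredient is a textbook correspondence, and the only point that requires attention is ensuring that the range is correctly identified, which is handled by the single identity $g_j(y_j) = 0$.
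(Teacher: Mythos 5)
Your proposal is correct and follows essentially the same approach as the paper: you factor $\psi_j^1$ through the quotient $A^{(j+1)}/\langle x_j\rangle$, apply the correspondence theorem for the surjection $g_j$, and use the single identity $g_j(y_j)=0$ to show the image lands inside $\mathcal{P}_j^1(A^{(j)})$. The only cosmetic difference is that the paper splits your $\phi_2$ into two steps through $\CSpec(A^{(j)}/\ker g_j)$, but the content is identical.
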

\begin{proof}
Using the First Isomorphism Theorem for algebras, we restrict the map $g_j$ from Lemma \ref{lemMapg_j} to yield an isomorphism $g'_j: A^{(j)}/\ker(g_j) \isomarrow A^{(j+1)}/\langle x_j \rangle$. This induces the following bi-increasing homeomorphisms between sets endowed with the Zariski topology:
\begin{align*}
f_1: \mathcal{P}_j^1(A^{(j+1)}) &{} ~ \longrightarrow ~ \mathrm{C.Spec}(A^{(j+1)}/\langle x_j \rangle),\\
f_2: \mathrm{C.Spec}(A^{(j+1)}/\langle x_j \rangle )  &{} ~ \longrightarrow ~ \mathrm{C.Spec}(A^{(j)}/\ker(g_j)), \\
f_3: \mathrm{C.Spec}(A^{(j)}/\ker(g_j))  &{} ~ \longrightarrow ~ \{Q\in \mathrm{C.Spec}(A^{(j)}) \mid \ker(g_j)\subseteq Q \}.
\end{align*}
The composition of these maps gives a bi-increasing homeomorphism
\begin{align*}
f_3 \circ f_2 \circ f_1 : \mathcal{P}_j^1(A^{(j+1)}) &{} ~ \longrightarrow ~ \{Q\in \mathrm{C.Spec}(A^{(j)}) \mid \ker(g_j)\subseteq Q \} \\
P &{} ~ \longmapsto ~ g_j^{-1}(P/\langle x_j \rangle).
\end{align*}
Note that $g_j(y_j)=x_j+\langle x_j \rangle=0$ so $\langle y_j \rangle \subseteq \ker(g_j)$, which leads to the inclusion
\[\{Q\in \mathrm{C.Spec}(A^{(j)}) \mid \ker(g_j) \subseteq Q \} ~ \subseteq ~ \mathcal{P}_j^1(A^{(j)}).\]
Therefore, from $f_3 \circ f_2 \circ f_1$, we can define an increasing injective map
\begin{align*}
\psi_j^1: \mathcal{P}_j^1(A^{(j+1)}) &{} ~ \longrightarrow ~ \mathcal{P}_j^1(A^{(j)}) \\
P &{} ~ \longmapsto ~ g_j^{-1}(P/\langle x_j \rangle),
\end{align*}
which induces a bi-increasing homeomorphism on its image, $\{Q\in \mathrm{C.Spec}(A^{(j)}) \mid \ker(g_j)\subseteq Q \}$.
\end{proof}

Using the two previous results we define the map $\psi_j:\CSpec(A^{(j+1)}) \rightarrow \CSpec(A^{(j)})$ where, for $P\in \CSpec(A^{(j+1)})$, we set
\[ \psi_j(P) :=\begin{cases}
	\psi_j^0(P)=PS_j^{-1}\cap A^{(j)} & \text{if } P\in \mathcal{P}_j^0(A^{(j+1)}); \\
	\psi_j^1(P)=g_j^{-1}(P/\langle x_j \rangle) & \text{if } P\in \mathcal{P}_j^1(A^{(j+1)}).
\end{cases} \]
The next result follows immediately.

\begin{proposition}\label{lemBiHom}
For $j\in \llbracket 2, N \rrbracket$ the map $\psi_j:\CSpec(A^{(j+1)}) \rightarrow \CSpec(A^{(j)})$ is injective. For $\epsilon \in \{0,1\}$, $\psi_j$ induces (by restriction) a bi-increasing homeomorphism $\mathcal{P}_j^{\epsilon}(A^{(j+1)}) \rightarrow \psi_j(\mathcal{P}_j^{\epsilon}(A^{(j+1)}))$ which is a closed subset of $\mathcal{P}_j^{\epsilon}(A^{(j)})$.
\end{proposition}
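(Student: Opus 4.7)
The plan is to bootstrap directly off Lemmas \ref{lemBiIncHomP0} and \ref{lemPsi1}, which already provide bi-increasing homeomorphisms from each of $\mathcal{P}_j^{0}(A^{(j+1)})$ and $\mathcal{P}_j^{1}(A^{(j+1)})$ onto the corresponding images in $\CSpec(A^{(j)})$. The two tasks left are (a) to upgrade the two piecewise injectivities to injectivity of $\psi_j$ on all of $\CSpec(A^{(j+1)})$, and (b) to verify that each image $\psi_j(\mathcal{P}_j^{\epsilon}(A^{(j+1)}))$ is closed inside $\mathcal{P}_j^{\epsilon}(A^{(j)})$.

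For the injectivity, I would first observe that $\CSpec(A^{(j+1)})$ is the disjoint union of $\mathcal{P}_j^{0}(A^{(j+1)})$ and $\mathcal{P}_j^{1}(A^{(j+1)})$, and similarly for $A^{(j)}$. Since both $\psi_j^0$ and $\psi_j^1$ are injective by the cited lemmas, it suffices to show that the two images cannot overlap. For $P\in\mathcal{P}_j^0(A^{(j+1)})$, the ideal $\psi_j^0(P)=PS_j^{-1}\cap A^{(j)}$ does not meet the Ore set $S_j=\{y_j^n\}$, so $y_j\notin\psi_j^0(P)$; hence $\psi_j^0(P)\in\mathcal{P}_j^0(A^{(j)})$. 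For $P\in\mathcal{P}_j^1(A^{(j+1)})$, since $g_j(y_j)=x_j+\langle x_j\rangle=0$, we have $y_j\in\ker(g_j)\subseteq g_j^{-1}(P/\langle x_j\rangle)=\psi_j^1(P)$, so $\psi_j^1(P)\in\mathcal{P}_j^1(A^{(j)})$. The images therefore sit in the disjoint pieces $\mathcal{P}_j^0(A^{(j)})$ and $\mathcal{P}_j^1(A^{(j)})$, giving injectivity of $\psi_j$ globally.

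For the homeomorphism statement on each piece, nothing new is needed beyond the two lemmas, so I would just quote them. Finally, for closedness of the image:

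\begin{itemize}
\item In case $\epsilon=0$, Lemma \ref{lemBiIncHomP0} gives $\psi_j^0(\mathcal{P}_j^0(A^{(j+1)}))=\mathcal{P}_j^0(A^{(j)})$, which is trivially closed inside itself.
\item In case $\epsilon=1$, the proof of Lemma \ref{lemPsi1} identifies the image as $\{Q\in\CSpec(A^{(j)})\mid\ker(g_j)\subseteq Q\}$. This is the intersection of $\mathcal{P}_j^1(A^{(j)})$ with the closed Zariski set $V(\ker(g_j))$ in $\CSpec(A^{(j)})$, hence closed in $\mathcal{P}_j^1(A^{(j)})$ with the induced topology.
\end{itemize}

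There is essentially no obstacle here; the statement really is an immediate consolidation of the two preceding lemmas together with the one observation that $\psi_j^\epsilon$ sends $\mathcal{P}_j^\epsilon(A^{(j+1)})$ into $\mathcal{P}_j^\epsilon(A^{(j)})$. The only slightly non-formal point is recognising that the image in the $\epsilon=1$ case is a Zariski-closed set cut out by $\ker(g_j)$, which is precisely the content already extracted in Lemma \ref{lemPsi1}.
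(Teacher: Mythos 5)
Your proof is correct and follows the same approach the paper intends; the paper gives no explicit proof (stating only that the result "follows immediately"), and your argument simply makes that consolidation explicit. One small redundancy: the fact that $\psi_j^\epsilon$ lands in $\mathcal{P}_j^\epsilon(A^{(j)})$ is already part of the stated codomains of Lemmas \ref{lemBiIncHomP0} and \ref{lemPsi1}, so your re-derivation of it (while correct) is not strictly needed to conclude disjointness of the images.
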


Using these maps, we define the canonical embedding:
\begin{definition}\label{defnPsi}
Set $\psi:=\psi_2\circ \cdots \circ \psi_N$ to be the injective map $\psi:\CSpec(A) \longrightarrow \CSpec(A')$.
We call $\psi$ the \emph{canonical embedding} of $\CSpec(A)$ into $\CSpec(A')$.
\end{definition}

\subsubsection{Partition of $\mathrm{C.Spec}(A)$}
Let $\mathcal{W}:=\mathbb{P}(\llbracket 1, N \rrbracket)$ denote the power set of $\llbracket 1, N \rrbracket$ and, for all $w\in \mathcal{W}$, set
\[\CSpec_w(A'):=\{Q\in \CSpec(A')\mid Q\cap \{T_1,\ldots T_N\}=\{T_i \mid i\in w\}\},\]
where $\{T_i\}_{i=1}^N$ are the generators of the quantum affine space $A'$. From \cite[Section 2.1]{GoodearlLetzter} we get:
\begin{lemma}\label{lemA'Partition}
The sets $\{\CSpec_w(A')\}_{w\in \mathcal{W}}$ provide a partition of $\CSpec(A')$.
\end{lemma}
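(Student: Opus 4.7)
The plan is to verify that the sets $\{\CSpec_w(A')\}_{w \in \mathcal{W}}$ are pairwise disjoint and exhaustive, both of which follow essentially from unravelling the definitions, since each completely prime ideal $Q$ of $A'$ determines a canonical subset of indices, namely $w_Q := \{i \in \llbracket 1, N \rrbracket : T_i \in Q\}$.

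For \emph{exhaustiveness}, I would fix an arbitrary $Q \in \CSpec(A')$ and form $w_Q$ as above. Directly from the definition of $w_Q$ we have
\[
Q \cap \{T_1, \ldots, T_N\} = \{T_i : i \in w_Q\},
\]
so $Q \in \CSpec_{w_Q}(A')$. Since $w_Q \in \mathcal{W}$, this places $Q$ in at least one member of the family.

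For \emph{disjointness}, suppose $Q \in \CSpec_w(A') \cap \CSpec_{w'}(A')$ for some $w, w' \in \mathcal{W}$. Then by the defining condition,
\[
\{T_i : i \in w\} = Q \cap \{T_1, \ldots, T_N\} = \{T_i : i \in w'\}.
\]
Since $T_1, \ldots, T_N$ are distinct generators of the quantum affine space $A' = \K_\Lambda[T_1, \ldots, T_N]$ (in particular they are $\K$-linearly independent monomials, hence pairwise distinct elements of $A'$), equality of these subsets of $\{T_1, \ldots, T_N\}$ forces $w = w'$. Hence $\CSpec_w(A') \cap \CSpec_{w'}(A') = \emptyset$ whenever $w \neq w'$.

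There is really no obstacle here: the lemma is essentially tautological once one observes that the assignment $Q \mapsto Q \cap \{T_1, \ldots, T_N\}$ is a well-defined function from $\CSpec(A')$ to the finite set of subsets of $\{T_1, \ldots, T_N\}$, and the classes $\CSpec_w(A')$ are precisely the fibres of this assignment (reindexed by $\mathcal{W}$ via $w \mapsto \{T_i : i \in w\}$). The substantive content of the partition, exploited later in the paper, lies not in its existence but in the structure of the individual strata $\CSpec_w(A')$, which is what Goodearl--Letzter analyse in the reference cited.
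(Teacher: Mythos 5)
Your proof correctly verifies that the classes $\CSpec_w(A')$ are pairwise disjoint and cover $\CSpec(A')$; this is indeed immediate, since they are precisely the fibres of the map $Q \mapsto Q \cap \{T_1, \ldots, T_N\}$ reindexed by $\mathcal{W}$. The paper offers no argument of its own and simply cites Goodearl--Letzter.

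However, you are likely missing one piece. In Theorem~\ref{lemAPartition} the index set is deliberately shrunk to $\mathcal{W}'$, the subfamily of $w$ with $\CSpec_w(A)\neq\emptyset$, precisely so that ``partition'' refers to a family of \emph{nonempty} blocks. By contrast, Lemma~\ref{lemA'Partition} is indexed by the full power set $\mathcal{W}$, so the implicit claim---and the reason the Goodearl--Letzter citation is not vacuous---is that every $\CSpec_w(A')$ is nonempty. Your closing remark treats the lemma as ``essentially tautological,'' which is true only for the weak sense of partition (fibres of a map). The nonemptiness is easy but not tautological: for each $w\in\mathcal{W}$, the ideal $J_w := \langle T_i \mid i\in w\rangle$ lies in $\CSpec_w(A')$, since $A'/J_w$ is isomorphic to the quantum affine space on the remaining variables $T_j$, $j\notin w$, hence a domain, so $J_w$ is completely prime, and $J_w\cap\{T_1,\ldots,T_N\}=\{T_i \mid i\in w\}$ because the images of $T_j$, $j\notin w$, are nonzero in the quotient. (The paper uses $J_w\in\CSpec_w(A')$ without comment in Definition~\ref{defnCauchonIdeal}.) Adding this one observation would make your proof complete under the stricter reading of ``partition.''
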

We use $\psi$ to pull this partition back to one on $\CSpec(A)$. For each $w\in \mathcal{W}$ we define
\[\CSpec_w(A):=\psi^{-1}(\CSpec_w(A'))\]
and let $\mathcal{W}'\subseteq \mathcal{W}$ denote the set of all $w\in \mathcal{W}$ such that $\CSpec_w(A)\neq \emptyset$. (Note that $\mathcal{W}'$ depends on the expression of the algebra $A$ as an iterated Ore extension.) We immediately obtain a partition of $\CSpec(A)$.

\begin{theorem}\label{lemAPartition}
The set $\CSpec(A)$  has a partition indexed by the family $\mathcal{W}'$ so that,
\[\CSpec(A)=\bigsqcup_{w\in \mathcal{W}'} \CSpec_w(A), \quad \text{ where } ~|\mathcal{W}'|\leq |\mathcal{W}|=2^N.\]
\end{theorem}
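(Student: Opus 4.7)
The plan is to show that this theorem is essentially a formal corollary of Lemma~\ref{lemA'Partition} pulled back through the canonical embedding $\psi$ of Definition~\ref{defnPsi}; all the substantive content already sits in the construction of $\psi$ and the quantum-affine-space partition.

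First I would record the general fact that if $X$ and $Y$ are sets, $f\colon X\to Y$ is any map, and $\{Y_i\}_{i\in I}$ is a partition of $Y$, then $\{f^{-1}(Y_i)\}_{i\in I}$ is a partition of $X$ (the preimages are pairwise disjoint because the $Y_i$ are, and they cover $X$ because every element of $X$ maps somewhere in $Y$). I would apply this with $X=\CSpec(A)$, $Y=\CSpec(A')$, $f=\psi$, and the index set $\mathcal{W}$. By Lemma~\ref{lemA'Partition}, $\{\CSpec_w(A')\}_{w\in\mathcal{W}}$ is a partition of $\CSpec(A')$, so $\{\psi^{-1}(\CSpec_w(A'))\}_{w\in\mathcal{W}}=\{\CSpec_w(A)\}_{w\in\mathcal{W}}$ is a partition of $\CSpec(A)$. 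Note that injectivity of $\psi$ (Proposition~\ref{lemBiHom} iterated) is not strictly needed here, though it is what makes the pullback partition interesting.

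Next I would trim the indexing set down to $\mathcal{W}'$: discarding the (possibly many) indices $w$ with $\CSpec_w(A)=\emptyset$ does not affect the disjoint union, so
\[
\CSpec(A)=\bigsqcup_{w\in\mathcal{W}}\CSpec_w(A)=\bigsqcup_{w\in\mathcal{W}'}\CSpec_w(A),
\]
which is the claimed partition. Finally, the cardinality bound is immediate: by definition $\mathcal{W}'\subseteq\mathcal{W}=\mathbb{P}(\llbracket 1,N\rrbracket)$, so $|\mathcal{W}'|\leq|\mathcal{W}|=2^N$.

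There is no real obstacle in this argument; the only thing to be careful about is that $\psi$ is defined on \emph{all} of $\CSpec(A)$ (not just on some subspace), which is guaranteed by Definition~\ref{defnPsi} as a composition $\psi_2\circ\cdots\circ\psi_N$ of maps $\psi_j\colon\CSpec(A^{(j+1)})\to\CSpec(A^{(j)})$, each of which is defined on the full completely prime spectrum via the case split on $\mathcal{P}_j^0$ versus $\mathcal{P}_j^1$. Once that point is checked, the partition statement and the bound $|\mathcal{W}'|\leq 2^N$ are formal.
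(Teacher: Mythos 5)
Your argument is correct and is essentially the same as the paper's, which presents the theorem as an immediate consequence of Lemma~\ref{lemA'Partition} and the definition $\CSpec_w(A)=\psi^{-1}(\CSpec_w(A'))$ without spelling out the elementary preimage-of-a-partition step you write down. Your remark that injectivity of $\psi$ is not needed here is a fair observation and does not conflict with the paper.
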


\begin{definition}\label{defnCauchonDiagramW'}
	We call the partition $\{\CSpec_w(A)\}_{w\in \mathcal{W}'}$ the \emph{canonical partition} of $\CSpec(A)$, and we call each $w\in \mathcal{W}'$ a \emph{Cauchon diagram of $A$}.
\end{definition}

\subsection{Properties of the Canonical Embedding}\label{sectionProperties}
In order to use the deleting derivations algorithm for the purpose of calculating the PI degree of quotient algebras, we need to be able to test whether a completely prime ideal of $A'$ lies in the image of the canonical embedding. 

\begin{lemma}\label{lemMembership1}
Fix some $j\in \llbracket 2, N \rrbracket$ and let $Q\in \mathrm{C.Spec}(A^{(j)})$. Then,
\[ Q\in \Ima(\psi_j) \iff [\text{Either } x_j=y_j \notin Q \text{ or } \ker(g_j)\subseteq Q]. \]
\end{lemma}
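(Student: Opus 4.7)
The plan is to unpack the piecewise definition of $\psi_j$ given right after Lemma~\ref{lemPsi1} and show that each branch contributes exactly one of the two alternatives in the statement. The source splits as the disjoint union $\CSpec(A^{(j+1)})=\mathcal{P}_j^0(A^{(j+1)})\sqcup\mathcal{P}_j^1(A^{(j+1)})$, and $\psi_j$ is $\psi_j^0$ on the first piece and $\psi_j^1$ on the second. Hence $\Ima(\psi_j)=\Ima(\psi_j^0)\cup\Ima(\psi_j^1)$, and the lemma reduces to identifying each of these two images concretely.

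First I would invoke Lemma~\ref{lemBiIncHomP0}: since $\psi_j^0$ is a homeomorphism of $\mathcal{P}_j^0(A^{(j+1)})$ onto $\mathcal{P}_j^0(A^{(j)})$, its image consists of exactly those $Q\in\CSpec(A^{(j)})$ with $y_j\notin Q$. This gives the first alternative. Next, I would apply Lemma~\ref{lemPsi1}, which constructs $\psi_j^1$ through the chain of homeomorphisms $f_3\circ f_2\circ f_1$ and identifies its image as $\{Q\in\CSpec(A^{(j)}):\ker(g_j)\subseteq Q\}$. This gives the second alternative, proving the implication ``$\Longleftarrow$''.

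For ``$\Longrightarrow$'', suppose $Q\in\Ima(\psi_j)$. If $Q$ comes from $\psi_j^0$, then by the above $y_j\notin Q$. Otherwise $Q$ comes from $\psi_j^1$, and again by the above $\ker(g_j)\subseteq Q$. To tidy up the picture and confirm the two cases are mutually exclusive (hence really indexed by $\mathcal{P}_j^0$ vs $\mathcal{P}_j^1$), I would note that $g_j(y_j)=x_j+\langle x_j\rangle=0$ by Lemma~\ref{lemMapg_j}, so $y_j\in\ker(g_j)$; therefore $\ker(g_j)\subseteq Q$ forces $y_j\in Q$, placing such a $Q$ in $\mathcal{P}_j^1(A^{(j)})$, while the $y_j\notin Q$ case puts $Q$ in $\mathcal{P}_j^0(A^{(j)})$.

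There is no real obstacle here: the lemma is essentially a bookkeeping consequence of the two preceding lemmas together with the piecewise definition of $\psi_j$. The only point that requires a moment's care is checking that the characterization ``$\ker(g_j)\subseteq Q$'' from Lemma~\ref{lemPsi1} really coincides with membership in $\Ima(\psi_j^1)$ rather than merely being implied by it, but this is built into the proof of Lemma~\ref{lemPsi1} via the homeomorphism $f_3$.
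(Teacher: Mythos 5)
Your proof is correct and takes essentially the same approach as the paper: you unpack the piecewise definition of $\psi_j$ into the two cases handled by Lemmas~\ref{lemBiIncHomP0} and~\ref{lemPsi1}, identifying $\Ima(\psi_j^0)=\mathcal{P}_j^0(A^{(j)})$ and $\Ima(\psi_j^1)=\{Q:\ker(g_j)\subseteq Q\}$. Your additional observation that $y_j\in\ker(g_j)$ makes the two alternatives disjoint is a useful tidying-up that the paper leaves implicit.
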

\begin{proof}
Apply Lemmas \ref{lemBiIncHomP0} and \ref{lemPsi1} to the cases $y_j\notin Q$ and $y_j\in Q$ respectively.
\end{proof}

We define injective maps $f_j:\CSpec(A^{(j+1)})\rightarrow \CSpec(A')$, for all $j \in \llbracket 1, N \rrbracket$, with $f_1:=\mathrm{id}_{\CSpec(A')}$, the identity on $\CSpec(A')$ and, for all $j \in \llbracket 2, N \rrbracket$, $f_j:=\psi_2\circ\cdots\circ \psi_j$, so that $f_N= \psi$.

\begin{proposition}\label{propMembership1}
Let $Q\in \CSpec(A')$. The following are equivalent:
\begin{itemize}
\item $Q\in \Ima(\psi)$.
\item For all $j \in \llbracket 2, N \rrbracket$ we have $Q\in \Ima(f_{j-1})$ and either $X_j^{(j)}=X_j^{(j+1)}\notin f_{j-1}^{-1}(Q)$ or $\ker(g_j)\subseteq f_{j-1}^{-1}(Q)$.
\end{itemize}
\end{proposition}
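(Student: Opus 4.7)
The plan is to reduce the statement to a repeated application of Lemma \ref{lemMembership1} via the factorization $f_j = f_{j-1}\circ \psi_j$ that holds for all $j\in\llbracket 2,N\rrbracket$, together with the injectivity of each $\psi_j$ (hence of each $f_j$) provided by Proposition \ref{lemBiHom}.

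For the forward implication, I would start from $Q\in\Ima(\psi)=\Ima(f_N)$ and fix some $j\in\llbracket 2,N\rrbracket$. Writing
\[
\psi \;=\; f_{j-1}\circ \big(\psi_j\circ \psi_{j+1}\circ\cdots \circ \psi_N\big),
\]
one immediately gets $Q\in\Ima(f_{j-1})$. Because $f_{j-1}$ is injective, the set $f_{j-1}^{-1}(Q)$ is a single element of $\CSpec(A^{(j)})$, and it lies in $\Ima(\psi_j\circ\cdots\circ\psi_N)\subseteq \Ima(\psi_j)$. Lemma \ref{lemMembership1} applied to this element of $\CSpec(A^{(j)})$ then yields exactly the dichotomy: either $X_j^{(j)}=X_j^{(j+1)}\notin f_{j-1}^{-1}(Q)$, or $\ker(g_j)\subseteq f_{j-1}^{-1}(Q)$.

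For the converse, I would argue by induction on $j$ running from $1$ up to $N$ that $Q\in \Ima(f_j)$, with the base case $j=1$ being trivial since $f_1=\mathrm{id}_{\CSpec(A')}$. Assume $Q\in\Ima(f_{j-1})$; by injectivity of $f_{j-1}$ there is a unique $Q_{j-1}:=f_{j-1}^{-1}(Q)\in\CSpec(A^{(j)})$ with $f_{j-1}(Q_{j-1})=Q$. The hypothesis for this $j$ says that either $X_j^{(j)}\notin Q_{j-1}$ or $\ker(g_j)\subseteq Q_{j-1}$, so Lemma \ref{lemMembership1} gives $Q_{j-1}\in\Ima(\psi_j)$. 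Pick $P_j\in\CSpec(A^{(j+1)})$ with $\psi_j(P_j)=Q_{j-1}$; then
\[
f_j(P_j) \;=\; f_{j-1}\big(\psi_j(P_j)\big) \;=\; f_{j-1}(Q_{j-1}) \;=\; Q,
\]
so $Q\in\Ima(f_j)$. Taking $j=N$ gives $Q\in\Ima(f_N)=\Ima(\psi)$, as required.

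There is no real obstacle here: once the factorization $f_j=f_{j-1}\circ\psi_j$ and the injectivity of the $\psi_j$'s are observed, the argument is purely formal, and the substantive content — the characterisation of $\Ima(\psi_j)$ in terms of $X_j^{(j)}$ and $\ker(g_j)$ — has been packaged into Lemma \ref{lemMembership1}. The only point that requires a bit of care is making sure that $f_{j-1}^{-1}(Q)$ really is a \emph{single} ideal of $A^{(j)}$ (so that the condition on it is unambiguous); this is why one must invoke injectivity of $f_{j-1}$ in both directions of the argument.
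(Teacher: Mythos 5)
Your argument is correct and mirrors the paper's proof: both directions reduce to Lemma \ref{lemMembership1} via the factorization $\psi=f_{j-1}\circ\psi_j\circ\cdots\circ\psi_N$, with the forward implication reading off $Q\in\Ima(f_{j-1})$ and $f_{j-1}^{-1}(Q)\in\Ima(\psi_j)$, and the converse producing a preimage under $\psi_N$ of $f_{N-1}^{-1}(Q)$. The only cosmetic difference is that you phrase the converse as an induction on $j$ from $1$ to $N$, whereas the paper just applies the $j=N$ case directly (the $j<N$ cases being already part of the hypothesis); your explicit remark that injectivity of $f_{j-1}$ makes $f_{j-1}^{-1}(Q)$ a well-defined single ideal is a point the paper leaves implicit.
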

\begin{proof}
Let $Q\in \mathrm{C.Spec}(A')$. Suppose $Q\in \Ima(\psi)$. Then $Q=\psi(P)$ for some $P\in \mathrm{C.Spec}(A)$.  Since $\psi=f_{j-1}\circ\psi_j\circ\cdots \psi_N$ then $Q=f_{j-1}(P_j)$ for all $j \in \llbracket 2, N \rrbracket$, where $P_j=\psi_j\circ \cdots \circ \psi_N(P)$. Hence $Q\in \Ima(f_{j-1})$ for all $j \in \llbracket 2, N \rrbracket$. From this we see that $f_{j-1}^{-1}(Q)\in \Ima(\psi_j)$, for all $j \in \llbracket 2, N\rrbracket$, and we apply Lemma \ref{lemMembership1} to $f_{j-1}^{-1}(Q)$ to conclude that either $X_j^{(j)}=X_j^{(j+1)}\notin f_{j-1}^{-1}(Q)$ or $\ker(g_j)\subseteq f_{j-1}^{-1}(Q)$.

Now suppose the second statement holds. By Lemma \ref{lemMembership1}, $f_{j-1}^{-1}(Q)\in \Ima(\psi_j)$ for all $j \in \llbracket 2, N \rrbracket$. Let $P':=f_{N-1}^{-1}(Q)$ so that $P'\in \Ima(\psi_N)$. Then $P'=\psi_N(P)$ for some $P\in \mathrm{C.Spec}(A)$ and
\[Q=f_{N-1}(P')=f_{N-1}(\psi_N(P))=\psi(P)\in \Ima(\psi).\]
\end{proof}

We finish this subsection by giving a sufficient condition for a completely prime ideal in $A'$ to be in the image of the canonical embedding. These next two results are not used in this paper, however they are stated here for the interested reader (proofs can be found in \cite[Section 4]{Thesis}).
\begin{theorem}\cite[Theorem 4.17]{Thesis}
Let $w\in \mathcal{W}'$. Then $\psi(\CSpec_w(A))$ is a (non-empty) closed subset of $\CSpec_w(A')$ and the map $\psi$ induces (by restriction) a bi-increasing homeomorphism from $\CSpec_w(A)$ to $\psi(\CSpec_w(A))$.
\end{theorem}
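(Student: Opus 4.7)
The plan is to exploit the factorisation $\psi=\psi_2\circ\cdots\circ\psi_N$ together with Proposition~\ref{lemBiHom}, pulling the $w$-stratification back through each intermediate algebra $A^{(j)}$ via the relation $\CSpec_w(A)=\psi^{-1}(\CSpec_w(A'))$. Non-emptiness of $\psi(\CSpec_w(A))$ is immediate from $w\in\mathcal{W}'$ and the injectivity of $\psi$.

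The technical core is a tracking lemma. For $P\in\CSpec_w(A)$ with iterates $P_j:=\psi_j\circ\cdots\circ\psi_N(P)\in\CSpec(A^{(j)})$, and for every $l\in\llbracket j,N\rrbracket$, I claim that $X_l^{(j)}\in P_j$ if and only if $l\in w$. I would prove this by downward induction on $j$; the base case $j=2$ holds since $P_2=\psi(P)\in\CSpec_w(A')$ and $X_l^{(2)}=T_l$. For the induction step, one notes that $X_l^{(j)}=X_l^{(j+1)}$ for $l\geq j$, so the equivalence $X_l^{(j+1)}\in P_{j+1}\iff X_l^{(j)}\in P_j$ follows from the explicit form of each branch of $\psi_j$: for $\psi_j^0$ it reduces to standard Ore-localisation theory for completely prime ideals at the multiplicative set $S_j$, and for $\psi_j^1$ it is immediate from the identity $g_j(X_l^{(j)})=X_l^{(j+1)}+\langle x_j\rangle$ given in Lemma~\ref{lemMapg_j}. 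Applied with $l=j$, this forces $P_{j+1}\in\mathcal{P}_j^{\epsilon_j}(A^{(j+1)})$ where $\epsilon_j:=1$ if $j\in w$ and $\epsilon_j:=0$ otherwise, so the branch used at each step is completely determined by $w$.

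To assemble the statement, I would set $\CSpec_w(A^{(j)}):=(\psi_2\circ\cdots\circ\psi_{j-1})^{-1}(\CSpec_w(A'))$ for $j\in\llbracket 2,N+1\rrbracket$, which recovers $\CSpec_w(A)$ at $j=N+1$ and $\CSpec_w(A')$ at $j=2$. The tracking argument, applied to $P_j\in\CSpec_w(A^{(j)})$ directly, shows $\CSpec_w(A^{(j)})\subseteq\mathcal{P}_j^{\epsilon_j}(A^{(j)})$, and a short chase using injectivity of $\psi_j$ then yields
\[
\psi_j\bigl(\CSpec_w(A^{(j+1)})\bigr)=\psi_j\bigl(\mathcal{P}_j^{\epsilon_j}(A^{(j+1)})\bigr)\cap\CSpec_w(A^{(j)}).
\]
By Proposition~\ref{lemBiHom} the right-hand side is a closed subset of $\CSpec_w(A^{(j)})$, and $\psi_j$ restricts to a bi-increasing homeomorphism onto it. Composing across $j=N,N-1,\ldots,2$ produces a bi-increasing homeomorphism $\CSpec_w(A)\to\psi(\CSpec_w(A))$ whose image is a closed subset of $\CSpec_w(A')$.

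The main obstacle will be the tracking lemma itself: verifying, for both forms of $\psi_j$, that the equivalence $X_l^{(j+1)}\in P_{j+1}\iff X_l^{(j)}\in P_j$ holds for every $l\geq j$ is what allows the $w$-stratification conditions to propagate consistently through the algorithm. Once that is secured, the closedness and the bi-increasing homeomorphism property follow essentially formally by composing the step-wise statements supplied by Proposition~\ref{lemBiHom}.
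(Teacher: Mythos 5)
Your argument is sound, and it follows the line of reasoning that the paper's development is designed to support (the paper itself omits the proof, deferring to \cite[Section~4]{Thesis}, and notes that those proofs mirror Cauchon's generic case and the Poisson analogue in \cite{LaunoisLecoutre}). The crucial technical ingredient you identify — the tracking lemma establishing $X_l^{(j)}\in P_j\iff l\in w$ for all $l\in\llbracket j,N\rrbracket$, hence that the branch $\epsilon_j$ of $\psi_j$ taken at each step is determined entirely by $w$ — is precisely what is needed to thread the $w$-stratum coherently through all the intermediate algebras $A^{(j)}$, after which the closedness and bi-increasing homeomorphism properties drop out by composing the stepwise statements of Proposition~\ref{lemBiHom}. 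Your verification of the two branches is correct: for $\psi_j^0$ the equivalence follows from the fact that a completely prime ideal disjoint from the Ore set $S_j$ extends and contracts to itself, and for $\psi_j^1$ it is immediate from $g_j(y_l)=x_l+\langle x_j\rangle$ with $y_l=x_l$ for $l\geq j$; and your observation that the lemma extends unchanged to arbitrary members of $\CSpec_w(A^{(j)})$ (not just iterates of elements of $\CSpec_w(A)$) is exactly the extra step needed before applying it at each intermediate level. One small slip: you call the induction on $j$ ``downward,'' but with base case $j=2$ and inductive step $j\to j+1$ it is an upward induction on $j$ (i.e.\ downward along the chain $A^{(2)},A^{(3)},\dots,A^{(N+1)}$ of successive preimages); this is purely a matter of wording and does not affect the argument.
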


\begin{proposition}\cite[Proposition 4.18]{Thesis}\label{propCPinImage}
Let $w\in \mathcal{W}',\, P\in \CSpec_w(A)$, and $Q\in \CSpec_w(A')$.  If $\psi(P)\subseteq Q$ then $Q\in \Ima(\psi)$.
\end{proposition}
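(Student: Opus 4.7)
The plan is to prove the result by induction on $j \in \llbracket 2, N+1 \rrbracket$, constructing at each step a completely prime ideal $Q_j \in \CSpec(A^{(j)})$ with $f_{j-1}(Q_j) = Q$ and $\tilde{P}_j \subseteq Q_j$, where $\tilde{P}_j := \psi_j \circ \cdots \circ \psi_N(P)$ (so $\tilde{P}_2 = \psi(P)$ and $\tilde{P}_{N+1} = P$). Taking $j = N+1$ will yield $Q_{N+1} \in \CSpec(A)$ with $\psi(Q_{N+1}) = f_N(Q_{N+1}) = Q$, proving $Q \in \Ima(\psi)$. The base case $j=2$ is immediate: set $Q_2 := Q$, and note $\tilde{P}_2 = \psi(P) \subseteq Q$ by hypothesis.

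For the inductive step, the central task is to show $Q_j \in \Ima(\psi_j)$, so that $Q_{j+1} := \psi_j^{-1}(Q_j) \in \CSpec(A^{(j+1)})$ is well-defined. By Lemma \ref{lemMembership1}, I need to verify that either $y_j \notin Q_j$ or $\ker(g_j) \subseteq Q_j$ (writing $y_j = X_j^{(j)}$ and $x_j = X_j^{(j+1)}$). I would split on whether $y_j \in \tilde{P}_j$: if $y_j \in \tilde{P}_j$, then $\tilde{P}_j \in \mathcal{P}_j^1(A^{(j)})$, so $\tilde{P}_j = \psi_j^1(\tilde{P}_{j+1}) = g_j^{-1}(\tilde{P}_{j+1}/\langle x_j \rangle)$, whence $\ker(g_j) \subseteq \tilde{P}_j \subseteq Q_j$ by the inductive inclusion, and we are done. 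The more delicate case is $y_j \notin \tilde{P}_j$, where I must conclude $y_j \notin Q_j$.

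To handle this case, I would first establish the following auxiliary observation by tracking the pull-backs through the algorithm: whenever $Q_k = \psi_k(Q_{k+1})$ and $l > k$, one has $X_l^{(k)} \in Q_k$ if and only if $X_l^{(k+1)} \in Q_{k+1}$. This follows directly from the two formulas defining $\psi_k$: in the $\psi_k^0$ case, from primality of $Q_{k+1}$ together with $x_k \notin Q_{k+1}$ generating $S_k$; in the $\psi_k^1$ case, from $g_k(X_l^{(k)}) = X_l^{(k+1)} + \langle x_k \rangle$ and $x_k \in Q_{k+1}$. Iterating this equivalence for $k = 2, \ldots, j-1$ yields $T_j = X_j^{(2)} \in Q \iff y_j = X_j^{(j)} \in Q_j$, and likewise $T_j \in \psi(P) \iff y_j \in \tilde{P}_j$. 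Now the partition hypothesis enters: both $\psi(P)$ and $Q$ lie in $\CSpec_w(A')$ and thus share the same intersection with $\{T_1, \ldots, T_N\}$, so $T_j \in Q \iff T_j \in \psi(P)$. Combining these equivalences forces $y_j \in Q_j \iff y_j \in \tilde{P}_j$, and in particular $y_j \notin \tilde{P}_j$ yields $y_j \notin Q_j$, as required.

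Finally, I must verify that the inclusion $\tilde{P}_{j+1} \subseteq Q_{j+1}$ persists. In both cases above, $\tilde{P}_j$ and $Q_j$ lie in the same stratum $\mathcal{P}_j^\epsilon(A^{(j)})$, so their $\psi_j$-preimages $\tilde{P}_{j+1}$ and $Q_{j+1}$ lie in $\mathcal{P}_j^\epsilon(A^{(j+1)})$, and Proposition \ref{lemBiHom} provides a bi-increasing homeomorphism between these strata that transports $\tilde{P}_j \subseteq Q_j$ to $\tilde{P}_{j+1} \subseteq Q_{j+1}$, closing the induction. The main obstacle is the preliminary observation relating $X_l^{(k)} \in Q_k$ to $X_l^{(k+1)} \in Q_{k+1}$: once this is in place, the Cauchon diagram hypothesis is exactly what is needed to align the behaviour of $\psi(P)$ and $Q$ at every step of the algorithm, and the argument proceeds smoothly.
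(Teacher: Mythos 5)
The paper itself does not include a proof of this statement, deferring to \cite[Section 4]{Thesis}, so there is no internal proof to compare against; I am therefore assessing your argument on its own terms. Your proof is correct and complete. The inductive scheme (construct $Q_j \in \CSpec(A^{(j)})$ with $f_{j-1}(Q_j)=Q$ and $\tilde{P}_j\subseteq Q_j$, then push through one more $\psi_j^{-1}$) is exactly the natural way to unwind Proposition~\ref{propMembership1}, and the three ingredients all hold up under scrutiny: the auxiliary observation that $X_l^{(k)}\in Q_k \iff X_l^{(k+1)}\in Q_{k+1}$ for $l>k$ follows, as you say, from $Q_{k+1}S_k^{-1}\cap A^{(k+1)}=Q_{k+1}$ (complete primality and $S_k\cap Q_{k+1}=\emptyset$) in the $\psi_k^0$ case and from $g_k(X_l^{(k)})=X_l^{(k+1)}+\langle x_k\rangle$ with $\langle x_k\rangle\subseteq Q_{k+1}$ in the $\psi_k^1$ case; iterating it aligns $y_j\in Q_j$ with $T_j\in Q$ and $y_j\in\tilde{P}_j$ with $T_j\in\psi(P)$, so the shared Cauchon diagram $w$ forces these to agree; and Lemma~\ref{lemMembership1} together with $\tilde{P}_j\subseteq Q_j$ then delivers $Q_j\in\Ima(\psi_j)$ in both the $y_j\in\tilde{P}_j$ branch (via $\ker(g_j)\subseteq\tilde{P}_j\subseteq Q_j$) and the $y_j\notin\tilde{P}_j$ branch (via $y_j\notin Q_j$). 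The final step, transporting the inclusion $\tilde{P}_j\subseteq Q_j$ down to $\tilde{P}_{j+1}\subseteq Q_{j+1}$ via the bi-increasing homeomorphism of Proposition~\ref{lemBiHom} restricted to the stratum $\mathcal{P}_j^\epsilon$, is correct since both ideals were just shown to lie in $\Ima(\psi_j)\cap\mathcal{P}_j^\epsilon(A^{(j)})$ with the same $\epsilon$. A very minor stylistic remark: you could emphasise that the case split (on whether $y_j\in\tilde{P}_j$) is precisely the split on $j\in w$ versus $j\notin w$, which is why the two branches line up so cleanly; this is already implicit in your auxiliary claim but worth saying explicitly.
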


\subsection{Completely Prime Quotients}\label{sectionCompletelyPrimeQuotients}
In this subsection we extend the algorithm in Section \ref{sectionDDA} to apply to completely prime quotient algebras of quantum nilpotent algebras. The results of the previous sections allow us to construct proofs in a similar way to those found in \cite[Section 5.3]{Cauchon}. This extended algorithm will allow us to pull certain irreducible representations on $A'/Q$ (clarified in Section \ref{sectionIrredRepDDA}) back to irreducible representations on $A/P$. 
 
\subsubsection{$A^{(j)}/P$ and $A^{(j+1)}/\psi_j(P)$}
We start by extending one step of the algorithm to completely prime quotients. For some $j\in \llbracket 2, N \rrbracket$, let $P\in \CSpec(A^{(j+1)})$ and $Q=\psi_j(P)\in \CSpec(A^{(j)})$ be its image under the canonical embedding. Set
\[B^{(j+1)}:=A^{(j+1)}/P, \quad B^{(j)}:=A^{(j)}/Q.\]
We denote by $\bar{x}_1,\ldots, \bar{x}_N\in B^{(j+1)}$ and $\bar{y}_1,\ldots, \bar{y}_N\in B^{(j)}$ the canonical images of the generators $x_1,\ldots, x_N\in A^{(j+1)}$ in $B^{(j+1)}$ and $y_1,\ldots, y_N\in A^{(j)}$ in $B^{(j)}$, respectively.

\begin{lemma}\label{lemujEq0}
Suppose $\bar{x}_j=0$. Then there exists an algebra isomorphism $B^{(j)} \rightarrow B^{(j+1)}$ sending $\bar{y}_i \mapsto \bar{x}_i$ for all $i\in \llbracket 1, N \rrbracket$.
\end{lemma}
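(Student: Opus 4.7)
The plan is to exploit the fact that the hypothesis $\bar{x}_j=0$ forces $x_j\in P$, which places us in the case covered by Lemma~\ref{lemPsi1}, so that $Q=\psi_j(P)=\psi_j^1(P)=g_j^{-1}(P/\langle x_j\rangle)$ by the definition of $\psi_j$. The idea is then to construct the required isomorphism as the factorisation of a single surjection $A^{(j)}\to B^{(j+1)}$ through its kernel $Q$.

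More precisely, I would first note that since $x_j\in P$, the ideal $\langle x_j\rangle$ is contained in $P$, so there is a well-defined surjective projection $\pi:A^{(j+1)}/\langle x_j\rangle\to A^{(j+1)}/P=B^{(j+1)}$ with kernel $P/\langle x_j\rangle$. Composing this with the surjective homomorphism $g_j:A^{(j)}\to A^{(j+1)}/\langle x_j\rangle$ of Lemma~\ref{lemMapg_j}, I obtain a surjective algebra homomorphism $\pi\circ g_j:A^{(j)}\to B^{(j+1)}$ which sends $y_i\mapsto x_i+P=\bar{x}_i$ for all $i\in\llbracket 1,N\rrbracket$.

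The kernel of $\pi\circ g_j$ is $g_j^{-1}(\ker\pi)=g_j^{-1}(P/\langle x_j\rangle)=Q$, by the definition of $\psi_j^1$ recalled above. Applying the First Isomorphism Theorem then yields an algebra isomorphism
\[
B^{(j)}=A^{(j)}/Q\isomarrow B^{(j+1)},
\]
which by construction sends $\bar{y}_i\mapsto\bar{x}_i$ for every $i\in\llbracket 1,N\rrbracket$, as required.

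There is no serious obstacle here: the lemma is essentially a repackaging of Lemma~\ref{lemPsi1} together with the observation that in the case $x_j\in P$, pulling back to $A^{(j)}$ and quotienting by $Q$ undoes both the passage through $A^{(j+1)}/\langle x_j\rangle$ and the surjection $g_j$. The only point deserving a line of care is to make explicit that $\langle x_j\rangle\subseteq P$ so that $\pi$ is well defined, and that $g_j(y_i)=x_i+\langle x_j\rangle$ maps to $\bar{x}_i$ under $\pi$, so that the generators are tracked as claimed.
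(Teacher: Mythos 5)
Your proposal is correct and follows essentially the same route as the paper: both factor the surjection $\pi\circ g_j:A^{(j)}\to B^{(j+1)}$ (with $g_j$ from Lemma~\ref{lemMapg_j} and $\pi$ the natural projection, well defined since $\langle x_j\rangle\subseteq P$), identify its kernel as $g_j^{-1}(P/\langle x_j\rangle)=Q$, and invoke the First Isomorphism Theorem.
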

\begin{proof}
Since $x_j\in P$ then $Q=\psi_j^1(P)=g_j^{-1}(P/\langle x_j \rangle)$. Concatenating $g_j$ with the natural surjection $\pi:A^{(j+1)}/\langle x_j \rangle  \rightarrow A^{(j+1)}/P$ gives the following surjective algebra homomorphism:
\[\begin{array}{ccccc}
A^{(j)} &\overset{g_j}{\longrightarrow} & A^{(j+1)}/\langle x_j \rangle & \overset{\pi}{\longrightarrow} & A^{(j+1)}/P \\
y_i & \longmapsto & x_i+\langle x_j \rangle & \longmapsto & \bar{x}_i,
\end{array}\]
The desired isomorphism may then be constructed upon noting that $\ker(\pi \circ g_j)= g_j^{-1}(P/\langle x_j \rangle)=Q$.
\end{proof}

\begin{lemma}\label{lemujNotEq0}
	Suppose $\bar{x}_j\neq 0$ and let $Z_j:=\{\bar{x}_j^n \mid n\in \N\}$. Then the following hold:
	\begin{enumerate}[(i)]
		\item $Z_j$ is a multiplicative set of regular elements of $B^{(j+1)}$, which satisfies the two-sided Ore condition.
		\item There exists an injective algebra homomorphism $\gamma:B^{(j)} \rightarrow B^{(j+1)} Z_j^{-1}$ defined on the generators of $B^{(j)}$ in the following way:
		\[ \gamma(\bar{y}_i) = \begin{cases} \bar{x}_i & \text{ if } i\geq j; \\
			\sum_{n=0}^{\infty}q_j^{\frac{n(n+1)}{2}}(q_j-1)^{-n}\lambda_{j,i}^{-n}\overline{d_{j,n}(x_i)}\bar{x}_j^{-n} & \text{ if } i<j,
		\end{cases} \]
		where $\overline{d_{j,n}(x_i)}$ denotes the canonical image of $d_{j,n}(x_i)$ in $B^{(j+1)}$.
		\item If we identify $B^{(j)}$ with its image $\gamma(B^{(j)}) \subseteq B^{(j+1)}Z_j^{-1}$ then $Z_j$ is a multiplicative set of regular elements in $B^{(j)}$ satisfying the two-sided Ore condition. Furthermore, $B^{(j)} Z_j^{-1}=B^{(j+1)} Z_j^{-1}$.
	\end{enumerate}
\end{lemma}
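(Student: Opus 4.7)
The plan is to derive the entire lemma from Theorem \ref{mainthm}(\ref{mainthmSj}) by passing to the quotient. Because $\bar x_j \neq 0$, we have $x_j \notin P$, so $P \in \mathcal{P}_j^0(A^{(j+1)})$ and therefore $Q = \psi_j(P) = \psi_j^0(P) = P S_j^{-1} \cap A^{(j)}$. Part (i) is then immediate: since $P$ is completely prime, $B^{(j+1)}$ is a domain, so $\bar x_j$ and all its powers are regular; and the two-sided Ore condition for $Z_j$ in $B^{(j+1)}$ is obtained by reducing modulo $P$ the two-sided Ore condition for $S_j$ in $A^{(j+1)}$ recorded in Theorem \ref{mainthm}(\ref{mainthmSj}).

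For part (ii) I would exploit the identity $A^{(j)} S_j^{-1} = A^{(j+1)} S_j^{-1}$ provided by Theorem \ref{mainthm}(\ref{mainthmSj}). Since $P \cap S_j = \emptyset$, the extension $P S_j^{-1}$ is a completely prime ideal of this common localization, whose quotient is canonically isomorphic to $B^{(j+1)} Z_j^{-1}$. The composition
\[A^{(j)} \hookrightarrow A^{(j+1)} S_j^{-1} \twoheadrightarrow A^{(j+1)} S_j^{-1} / P S_j^{-1} \cong B^{(j+1)} Z_j^{-1}\]
has kernel $P S_j^{-1} \cap A^{(j)} = Q$ and so factors through the required injective algebra map $\gamma \colon B^{(j)} \hookrightarrow B^{(j+1)} Z_j^{-1}$. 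To read off the explicit formula, I would substitute into the definition (\ref{yequation}) of $y_i$ (for $i<j$) the identity $\sigma_j^{-n}(x_i) = \lambda_{j,i}^{-n} x_i$, valid by H\ref{hyp1}.\ref{hyp1.2}, pull the scalars through the $\K$-linear operator $d_{j,n}$, and then reduce modulo $P$. Local nilpotence of $\{d_{j,n}\}$ ensures the sum is finite.

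For part (iii), once $B^{(j)}$ is identified with $\gamma(B^{(j)})$, we have $Z_j \subseteq B^{(j)}$ because $\gamma(\bar y_j) = \bar x_j$. Since $y_j = x_j \notin Q$ (as $Q \in \mathcal{P}_j^0(A^{(j)})$), the set $S_j$ meets $Q$ trivially, and the Ore condition for $S_j$ in $A^{(j)}$ from Theorem \ref{mainthm}(\ref{mainthmSj}) descends to the two-sided Ore condition for $Z_j$ in $B^{(j)}$. The equality $B^{(j)} Z_j^{-1} = B^{(j+1)} Z_j^{-1}$ then reduces to $A^{(j)} S_j^{-1} / Q S_j^{-1} = A^{(j+1)} S_j^{-1} / P S_j^{-1}$: the ambient algebras coincide by Theorem \ref{mainthm}(\ref{mainthmSj}), and $Q S_j^{-1} = P S_j^{-1}$ by the usual Ore-localization--contraction correspondence. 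The main obstacle is the bookkeeping in (ii), namely extracting the explicit formula by rewriting $\sigma_j^{-n}$ as a scalar factor and passing cleanly to $B^{(j+1)} Z_j^{-1}$; the remaining assertions are formal consequences of localization theory and Theorem \ref{mainthm}(\ref{mainthmSj}).
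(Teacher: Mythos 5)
Your proposal is correct and follows essentially the same route as the paper: both reduce Lemma \ref{lemujNotEq0} to Theorem \ref{mainthm}(\ref{mainthmSj}) by passing to the common localization $\Omega/\Theta = A^{(j+1)}S_j^{-1}/PS_j^{-1} \cong B^{(j+1)}Z_j^{-1}$, defining $\gamma$ as the factorization of $A^{(j)} \hookrightarrow \Omega \twoheadrightarrow \Omega/\Theta$ through $Q = \Theta \cap A^{(j)}$, and reading off the explicit formula for $\gamma(\bar y_i)$ from equation (\ref{yequation}) after replacing $\sigma_j^{-n}(x_i)$ by $\lambda_{j,i}^{-n}x_i$. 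The only cosmetic difference is that the paper verifies the Ore condition for $Z_j$ directly inside $\Omega/\Theta$ (which also establishes $B^{(j+1)}Z_j^{-1}=\Omega/\Theta$ in one sweep), whereas you invoke the standard fact that an Ore set passes to the quotient by a completely prime ideal it misses.
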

\begin{proof}
Since $x_j \notin P$ then $Q=\psi_j^0(P)=PS_j^{-1} \cap A^{(j)}$, where $S_j=\{x_j^n \mid n\in \N\}$ is the multiplicatively closed set of regular elements in $A^{(j+1)}$ and $A^{(j)}$ satisfying the two-sided Ore condition (Theorem \ref{mainthm}(\ref{mainthmSj})). Denote the subalgebra $A^{(j+1)}S_j^{-1} = A^{(j)}S_j^{-1} \subseteq F$ by $\Omega$ and the completely prime ideal $PS_j^{-1}=QS_j^{-1}$ by $\Theta\lhd \Omega$. Note that $\Theta \cap A^{(j+1)}=P$ and $\Theta \cap A^{(j)}=Q$.

We define an injective algebra homomorphism $\gamma\,' : B^{(j+1)} \longrightarrow \Omega/\Theta; ~ a+ P  \longmapsto  a1^{-1}  + \Theta$ and identify $B^{(j+1)}$ with its image. Since $Z_j=\{\bar{x}_j^n \mid n\in \N\}=\{x_j^n+P \mid n\in \N\}=S_j+P$, its image under $\gamma\,'$ becomes 
\[\gamma\,'(Z_j)=Z_j1^{-1}+\Theta= (S_j+P)1^{-1} + \Theta = S_j1^{-1} +  \Theta \subseteq B^{(j+1)} \subseteq \Omega/ \Theta.\]
Identifying $Z_j$ with its image $\gamma\,'(Z_j)$, we observe that all elements of the set $Z_j$ are invertible in $B^{(j+1)}$. We can therefore write any element $b\in \Omega/\Theta$ as 
\begin{align}\label{eqnbinOmegaTheta}
b=a_1s_1^{-1}+\Theta=s_2^{-1}a_2+\Theta,
\end{align}
where $a_1, a_2\in A^{(j+1)}$ and $s_1, s_2 \in S_j$. Let $b_1, b_2 \in B^{(j+1)}$ and $z_1, z_2 \in Z_j$ such that
\[b_1=a_1 1^{-1}+\Theta, ~~~ b_2=a_2 1^{-1}+\Theta, ~~~ z_1=s_1 1^{-1}+\Theta, ~~~ z_2=s_2 1^{-1}+\Theta.\]
Using (\ref{eqnbinOmegaTheta}), we see that, for all $b\in \Omega/\Theta$,
\[ b= b_1z_1^{-1} = z_2^{-1}b_2. \]
This shows that the set $Z_j\subset B^{(j+1)}$ satisfies the two-sided Ore condition, thus proving property (i) of the lemma. We have also proved the equality $B^{(j+1)}Z_j^{-1}=\Omega/\Theta$, i.e. $(A^{(j+1)}/P)Z_j^{-1} = A^{(j+1)}S_j^{-1}/PS_j^{-1}$.

For part (ii) we use the fact that $\Theta \cap A^{(j)} = Q$ and $B^{(j+1)}Z_j^{-1}=\Omega/\Theta$ to define an injective homomorphism
\begin{align*}
\gamma: B^{(j)} ~~ &{} ~ \longrightarrow ~\Omega/\Theta\\
a + Q &{} ~ \longmapsto ~ a1^{-1}  + \Theta.
\end{align*}
Recall that the generators $y_i\in A^{(j)}$ are defined as:
\[ y_i :=  \begin{cases} x_i & \text{ if } i\geq j; \\
								\sum_{n=0}^{\infty}q_j^{\frac{n(n+1)}{2}}(q_j-1)^{-n}\lambda_{j,i}^{-n}d_{j,n}(x_i)x_j^{-n} & \text{ if } i<j.
				\end{cases} \]
It is straightforward to check that $\gamma(\bar{y}_i)=y_i 1^{-1}+\Theta$ gives the desired results for $i<j$ and $i \geq j$.

Part (iii) is proved in the same way as (i) by identifying $B^{(j)}$ with its image $\gamma(B^{(j)})$.
\end{proof}

Lemmas \ref{lemujEq0} and \ref{lemujNotEq0} imply:
\begin{lemma}\label{lemFrac(U)=Frac(V)}
	$\Frac(B^{(j+1)})=\Frac(B^{(j)})$.
\end{lemma}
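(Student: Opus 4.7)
The plan is to split into the two cases already handled by the preceding lemmas and invoke each one directly, so very little new work is required.

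First I would note that both $B^{(j+1)}=A^{(j+1)}/P$ and $B^{(j)}=A^{(j)}/Q$ are domains, since $P$ and $Q=\psi_j(P)$ are completely prime ideals of noetherian domains. Consequently each admits a (skew) division ring of fractions, and for any multiplicatively closed set $T$ of regular elements in such a domain which satisfies the two-sided Ore condition, one has $\Frac(B T^{-1}) = \Frac(B)$ (the Ore localisation embeds in $\Frac(B)$ and contains $B$).

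Next I would dispose of the case $\bar{x}_j = 0$. Here Lemma~\ref{lemujEq0} provides an algebra isomorphism $B^{(j)} \isomarrow B^{(j+1)}$, and so trivially $\Frac(B^{(j)})=\Frac(B^{(j+1)})$.

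Then I would treat the case $\bar{x}_j \neq 0$. By Lemma~\ref{lemujNotEq0}(i) and (iii), the set $Z_j=\{\bar{x}_j^n\mid n\in\N\}$ is a multiplicative set of regular elements satisfying the two-sided Ore condition in both $B^{(j+1)}$ and (after identifying $B^{(j)}$ with its image under the injective homomorphism $\gamma$) in $B^{(j)}$, and moreover $B^{(j)}Z_j^{-1}=B^{(j+1)}Z_j^{-1}$. Applying the general observation from the first step to each of $B^{(j)}$ and $B^{(j+1)}$ yields
\[
\Frac(B^{(j)}) \;=\; \Frac\!\bigl(B^{(j)}Z_j^{-1}\bigr) \;=\; \Frac\!\bigl(B^{(j+1)}Z_j^{-1}\bigr) \;=\; \Frac(B^{(j+1)}),
\]
which finishes the proof. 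There is no real obstacle here: the content has been packaged into Lemmas~\ref{lemujEq0} and~\ref{lemujNotEq0}, and the only mildly delicate point is the implicit identification of $B^{(j)}$ with $\gamma(B^{(j)})\subseteq B^{(j+1)}Z_j^{-1}$, which is legitimate because $\gamma$ is injective and already respects the equality of Ore localisations.
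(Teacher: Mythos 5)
Your proof is correct and follows exactly the route the paper intends: the paper simply states ``Lemmas~\ref{lemujEq0} and \ref{lemujNotEq0} imply'' the result without elaborating, and your case split on $\bar{x}_j=0$ versus $\bar{x}_j\neq 0$, together with the standard fact that localising at a two-sided Ore set of regular elements does not change the division ring of fractions, is precisely the omitted argument.
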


\subsubsection{$A/P$ and $A'/\psi(P)$}
We continue to extend the algorithm to apply to completely prime quotients $A/P$. Let $P \in \CSpec(A)$ and $Q=\psi(P)\in \CSpec(A')$ and set the following notation:
\begin{itemize}
	\item $B:=A/P$ and set $\bar{X}_1, \ldots, \bar{X}_N\in B$ to be the canonical images of $X_1, \ldots, X_N\in A$.
	\item $B':=A'/Q$ and set $t_1,\ldots, t_N\in B'$ to be the canonical images of $T_1, \ldots, T_N\in A'$.
	\item For $j \in \llbracket 2, N+1 \rrbracket$, denote by $P_j:=\psi_j \circ \cdots \circ \psi_N(P)\in \CSpec(A^{(j)})$ the image of $P$ after $N-j+1$ steps of the deleting derivations algorithm. 
	\item For each $j \in \llbracket 2, N+1 \rrbracket$, define the algebra $B^{(j)}:=A^{(j)}/P_j$ and denote by $\bar{X}_1^{(j)}, \ldots, \bar{X}_N^{(j)}$ the canonical images of $X_1^{(j)}, \ldots, X_N^{(j)}$ in $B^{(j)}$. Note: $B^{(N+1)}=B$ with $(\bar{X}_1^{(N+1)},\ldots, \bar{X}_N^{(N+1)})=(\bar{X}_1, \ldots, \bar{X}_N)$, and $B^{(2)}=B'$ with $(\bar{X}_1^{(2)},\ldots, \bar{X}_N^{(2)})=(t_1, \ldots, t_N)$.
\end{itemize}

\begin{proposition}\label{propUjSubalgFracU}
For each $j \in \llbracket 2, N+1 \rrbracket$, $B^{(j)}$ is a subalgebra of $\Frac(B)$ generated by $\bar{X}_1^{(j)},\ldots, \bar{X}_N^{(j)}$ and there is an algebra homomorphism,
\begin{align*}
	f_j: ~~ A^{(j)} &{} ~ \longrightarrow ~ \Frac(B) \\
	X_i^{(j)} &{} ~ \longmapsto ~ \bar{X}_i^{(j)}
\end{align*}
with image $B^{(j)}$ and kernel $P_j$.
\end{proposition}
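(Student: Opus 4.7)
The plan is to proceed by descending induction on $j \in \llbracket 2, N+1 \rrbracket$, leveraging the two structural lemmas \ref{lemujEq0} and \ref{lemujNotEq0} that describe exactly how $B^{(j)}$ is built from $B^{(j+1)}$ at each step of the algorithm.

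For the base case $j = N+1$, we have $A^{(N+1)} = A$, $P_{N+1} = P$, and $X_i^{(N+1)} = X_i$ by definition, so $B^{(N+1)} = A/P = B$ embeds in $\Frac(B)$ and $f_{N+1}$ is simply the composite $A \twoheadrightarrow A/P \hookrightarrow \Frac(B)$, which clearly has image $B$, kernel $P$, and sends $X_i$ to $\bar{X}_i$.

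For the inductive step, assume the statement for $j+1$, so $B^{(j+1)}$ is realised as a subalgebra of $\Frac(B)$ generated by the elements $\bar{X}_1^{(j+1)}, \ldots, \bar{X}_N^{(j+1)}$, and $f_{j+1}$ has image $B^{(j+1)}$ and kernel $P_{j+1}$. Since $P_j = \psi_j(P_{j+1})$ by the definition of $P_j$ in the notation set before the proposition, I split into two cases according to whether $\bar{X}_j^{(j+1)} = \bar{x}_j$ is zero in $B^{(j+1)}$ or not. If $\bar{x}_j = 0$, Lemma \ref{lemujEq0} yields an isomorphism $B^{(j)} \isomarrow B^{(j+1)}$ sending $\bar{y}_i \mapsto \bar{x}_i$; composing with the inclusion $B^{(j+1)} \hookrightarrow \Frac(B)$ realises $B^{(j)}$ as a subalgebra of $\Frac(B)$, and under this identification $\bar{X}_i^{(j)} = \bar{X}_i^{(j+1)}$. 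If instead $\bar{x}_j \neq 0$, Lemma \ref{lemujNotEq0} provides an injective homomorphism $\gamma : B^{(j)} \hookrightarrow B^{(j+1)} Z_j^{-1}$; since $\bar{x}_j$ is a nonzero element of the inherited domain $\Frac(B)$, it is invertible there, so the universal property of Ore localisation extends the embedding $B^{(j+1)} \hookrightarrow \Frac(B)$ to $B^{(j+1)} Z_j^{-1} \hookrightarrow \Frac(B)$, and composing with $\gamma$ gives the required embedding of $B^{(j)}$ into $\Frac(B)$. In either case, define $f_j$ to be the composite
\[
A^{(j)} \twoheadrightarrow A^{(j)}/P_j = B^{(j)} \hookrightarrow \Frac(B),
\]
which automatically has kernel $P_j$ and image $B^{(j)}$.

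The one point requiring verification is that $f_j(X_i^{(j)}) = \bar{X}_i^{(j)}$ agrees with the original convention fixing $\bar{X}_i^{(j)}$ as the canonical image of $X_i^{(j)} = y_i$ in $B^{(j)}$: this is immediate from the definitions in Case 1 and follows in Case 2 from the explicit formula for $\gamma(\bar{y}_i)$ given in Lemma \ref{lemujNotEq0}(ii), which is precisely the image under $f_{j+1}$ of the element $y_i$ defined by equation (\ref{yequation}). The main obstacle is really just this bookkeeping of identifications across the multiple subalgebras of $\Frac(B)$; the genuine algebraic content (the Ore localisation compatibility and the treatment of the two cases $\bar{x}_j = 0$ versus $\bar{x}_j \neq 0$) has already been packaged in the preceding lemmas.
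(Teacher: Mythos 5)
Your proposal is correct and is in essence the same argument the paper gives: the paper simply cites Lemma~\ref{lemFrac(U)=Frac(V)} (which asserts $\Frac(B^{(j+1)})=\Frac(B^{(j)})$) and notes that $B^{(j)}$ trivially embeds into its own field of fractions, hence into $\Frac(B)$, whereas you unwind that lemma back to Lemmas~\ref{lemujEq0} and~\ref{lemujNotEq0} and run the chain of identifications explicitly by descending induction. The explicit verification that $f_j(X_i^{(j)})=\bar{X}_i^{(j)}$ matches the formula for $\gamma(\bar{y}_i)$ in Lemma~\ref{lemujNotEq0}(ii) is a detail the paper glosses over; including it, as you do, is a mild improvement in clarity, not a different proof.
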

\begin{proof}
Since $B^{(j)}$ is (trivially) a subalgebra of $\text{Frac}(B^{(j)})$ generated by $\bar{X}_1^{(j)}, \ldots, \bar{X}_N^{(j)}$, Lemma \ref{lemFrac(U)=Frac(V)} implies that $B^{(j)}$ is a subalgebra of $\Frac(B)$ generated by these same elements. The homomorphism $f_j$ is the concatenation of the natural embedding $B^{(j)} \hookrightarrow \text{Frac}(B)$ with the canonical surjection, $\pi_j:A^{(j)} \rightarrow A^{(j)}/P_j=B^{(j)}$. The stated image and kernel are easily verified.
\end{proof}

\begin{proposition}\label{propconfusedalgs2}
	Let $j \in \llbracket 2, N+1 \rrbracket$.
	\begin{enumerate}[(i)]
		\item If $\bar{X}_j^{(j+1)}=0$ then $\bar{X}_i^{(j)}=\bar{X}_i^{(j+1)}$ for all $i \in \llbracket 1, N \rrbracket$.
		\item Suppose $\bar{X}_j^{(j+1)}\neq 0$ and set $(x_1,\ldots, x_N):=(X_1^{(j+1)},\ldots,X_N^{(j+1)})$.  Then the generators of $B^{(j)}$ are can be obtained as follows:
		\[ \bar{X}_i^{(j)}= \begin{cases} \bar{X}_i^{(j+1)} & \text{ if } i\geq j; \\
			\sum_{n=0}^{\infty}q_j^{\frac{n(n+1)}{2}}(q_j-1)^{-n}\lambda_{j,i}^{-n}f_{j+1} \circ d_{j,n}(x_i)(\bar{X}_j^{(j+1)})^{-n} & \text{ if } i<j,
		\end{cases} \]
		where $f_{j+1}$ is the map defined in Proposition \ref{propUjSubalgFracU}.
		\item Suppose $\bar{X}_j^{(j+1)}\neq 0$ and let $Z_j=\{(\bar{X}_j^{(j+1)})^n \mid n\in \N \}=\{(\bar{X}_j^{(j)})^n \mid n\in \N \}$ be a multiplicatively closed set of regular elements in $B^{(j)}$ and $B^{(j+1)}$. Then $Z_j$ satisfies the two-sided Ore condition in both $B^{(j)}$ and $B^{(j+1)}$ and we have $B^{(j)}Z_j^{-1}=B^{(j+1)}Z_j^{-1}$.
	\end{enumerate}
\end{proposition}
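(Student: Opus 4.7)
The plan is to apply the one-step Lemmas~\ref{lemujEq0} and~\ref{lemujNotEq0} with the choice $P := P_{j+1} \in \CSpec(A^{(j+1)})$, observing that $\psi_j(P_{j+1}) = P_j$ by the definition $P_j = \psi_j \circ \cdots \circ \psi_N(P)$. Under the notational dictionary $x_i = X_i^{(j+1)}$, $y_i = X_i^{(j)}$, $\bar{x}_i = \bar{X}_i^{(j+1)}$, $\bar{y}_i = \bar{X}_i^{(j)}$, these lemmas describe exactly the passage from $B^{(j+1)}$ to $B^{(j)}$; the only nontrivial extra work is to match the abstract identifications produced by those lemmas with the common embedding of the $B^{(j)}$ into $\Frac(B)$ supplied by Proposition~\ref{propUjSubalgFracU}.

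For part (i), the hypothesis $\bar{X}_j^{(j+1)} = 0$ is exactly $\bar{x}_j = 0$. Lemma~\ref{lemujEq0} produces an algebra isomorphism $B^{(j)} \to B^{(j+1)}$ with $\bar{y}_i \mapsto \bar{x}_i$. I would argue that this abstract isomorphism coincides with the restriction of the identity of $\Frac(B)$: both $B^{(j)}$ and $B^{(j+1)}$ sit inside $\Frac(B)$ by Proposition~\ref{propUjSubalgFracU}, and Lemma~\ref{lemFrac(U)=Frac(V)} identifies $\Frac(B^{(j)})$ with $\Frac(B^{(j+1)})$; since the algebra isomorphism of Lemma~\ref{lemujEq0} extends uniquely to an isomorphism of fraction fields fixing the image of $B^{(j+1)}$, it must be the identity. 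The equality $\bar{X}_i^{(j)} = \bar{X}_i^{(j+1)}$ in $\Frac(B)$ follows.

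For part (ii), the hypothesis $\bar{X}_j^{(j+1)} \neq 0$ is $\bar{x}_j \neq 0$, and Lemma~\ref{lemujNotEq0}(ii) furnishes an injective algebra homomorphism $\gamma : B^{(j)} \hookrightarrow B^{(j+1)} Z_j^{-1}$ with the explicit formula for $\gamma(\bar{y}_i)$ when $i < j$. Identifying $B^{(j)}$ with $\gamma(B^{(j)}) \subseteq B^{(j+1)} Z_j^{-1} \subseteq \Frac(B^{(j+1)}) = \Frac(B)$ (this is the identification used in Lemma~\ref{lemFrac(U)=Frac(V)} and hence in Proposition~\ref{propUjSubalgFracU}), and noting that by construction $\overline{d_{j,n}(x_i)}$ is precisely $f_{j+1}(d_{j,n}(X_i^{(j+1)})) \in B^{(j+1)} \subseteq \Frac(B)$, the formula for $\bar{X}_i^{(j)}$ with $i < j$ drops out; the case $i \geq j$ is immediate from $\gamma(\bar{y}_i) = \bar{x}_i$. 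Part (iii) is then a direct transcription of Lemma~\ref{lemujNotEq0}(i) and (iii) under the same identifications.

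The main obstacle I expect is purely bookkeeping: making precise that the embedding $B^{(j)} \hookrightarrow \Frac(B)$ coming from Proposition~\ref{propUjSubalgFracU} (defined via the inductive chain of identifications $\Frac(B^{(j)}) = \Frac(B^{(j+1)}) = \cdots = \Frac(B)$) agrees with the one produced by the lemmas at each step. I would handle this by downward induction on $j$ from $N+1$ to $2$: for $j = N+1$ the embedding is the identity, and the inductive step is exactly the content of Lemmas~\ref{lemujEq0} and~\ref{lemujNotEq0}, which furnish the identification $\Frac(B^{(j)}) = \Frac(B^{(j+1)})$ used in Lemma~\ref{lemFrac(U)=Frac(V)} in the first place. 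Once this compatibility is settled, all three assertions reduce to rewriting the conclusions of the one-step lemmas in the notation of the current proposition.
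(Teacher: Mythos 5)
Your proof follows the same route as the paper: part (i) from Lemma~\ref{lemujEq0} together with Lemma~\ref{lemFrac(U)=Frac(V)}, part (ii) from Lemma~\ref{lemujNotEq0}(ii) after noting $f_{j+1}\circ d_{j,n}(x_i)=\overline{d_{j,n}(x_i)}\cdot 1^{-1}$ in $\Frac(B)$, and part (iii) from Lemma~\ref{lemujNotEq0}(i) and (iii). The extra care you take to match the abstract isomorphisms of the one-step lemmas with the common embedding of the $B^{(j)}$ into $\Frac(B)$ is exactly the bookkeeping the paper leaves implicit, and your downward-induction handling of it is sound.
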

\begin{proof}
Lemmas \ref{lemujEq0} and \ref{lemFrac(U)=Frac(V)} prove part (i).  Part (ii) follows from Lemma \ref{lemujNotEq0} once one notes that $f_{j+1} \circ d_{j,n}(x_i)=\overline{d_{j,n}(x_i)}\cdot 1^{-1} \in \text{Frac}(B)$. Part (iii) also follows directly from Lemma \ref{lemujNotEq0}.
\end{proof}

Let $w\in \mathcal{W}'$ with $P\in \mathrm{C.Spec}_w(A)$ and $Q=\psi(P)$. By the definition of $\mathrm{C.Spec}_w(A')$, we have that $T_i\in Q$ if and only if $i\in w$ or, equivalently, $t_i=0$ if and only if $i \in w$.  Let $i\in \bar{w}:=\{1,\ldots, N\}\backslash w$ so that $t_i\neq 0$. Then, since $T_i$ is normal in $A'$ and $Q$ is completely prime, $t_i$ is normal and regular in $B'$, hence it is invertible in $\text{Frac}(B')=\text{Frac}(B)$. We denote by $\Sigma$ the multiplicatively closed set of regular elements in $B'$ generated by all $t_i$ such that $i\in \bar{w}$. From this set we define, recursively, the sets $\Sigma_j\subset B^{(j)}$ for $j \in \llbracket 2, N+1 \rrbracket$ in the following way:
\[ \Sigma_2:= \Sigma, \quad \Sigma_{j+1}:=B^{(j+1)}\cap \Sigma_j. \]

The next result extends Proposition \ref{propconfusedalgs} to quotient algebras.
\begin{proposition}\label{propconfusedalgs3}
	For each $j\in \llbracket 2, N+1 \rrbracket$ the following statements hold:
	\begin{enumerate}[(i)]
		\item $\Sigma_j$ is a multiplicatively closed set of regular elements in $B^{(j)}$ which contains, as a subset, $\{\bar{X}_i^{(j)}\mid i\in \llbracket  j-1, N  \rrbracket \text{ and } \bar{X}_i^{(j)}\neq 0 \}$;
		\item $\Sigma_j$ satisfies the two-sided Ore condition in $B^{(j)}$;
		\item The algebras $B^{(j)}\Sigma_j^{-1}$, when considered as subalgebras of $\Frac(B)$, are all equal.
	\end{enumerate}
\end{proposition}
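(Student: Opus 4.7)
The plan is to proceed by induction on $j$, mirroring the argument used in Proposition~\ref{propconfusedalgs}, but now at the level of quotient algebras inside $\Frac(B)$, where the essential new ingredient is Proposition~\ref{propconfusedalgs2} which controls the transition from $B^{(j+1)}$ to $B^{(j)}$. The base case $j=2$ is immediate: $B^{(2)}=B'$ is a completely prime quotient of the quantum affine space $A'$, so the nonzero images of the generators (those $t_i$ with $i\in\bar{w}$) are normal regular elements, and $\Sigma_2=\Sigma$ is by definition the multiplicatively closed set they generate, so (i)--(iii) hold trivially.

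For the inductive step, assuming (i)--(iii) at stage $j$, I split according to whether $\bar{X}_j^{(j+1)}$ vanishes. If $\bar{X}_j^{(j+1)}=0$, then Proposition~\ref{propconfusedalgs2}(i) gives $\bar{X}_i^{(j)}=\bar{X}_i^{(j+1)}$ for all $i$, so $B^{(j+1)}$ and $B^{(j)}$ coincide as subalgebras of $\Frac(B)$; hence $\Sigma_{j+1}=B^{(j+1)}\cap\Sigma_j=\Sigma_j$ and all three properties are inherited directly. If instead $\bar{X}_j^{(j+1)}\neq 0$, Proposition~\ref{propconfusedalgs2}(ii) tells us that $\bar{X}_i^{(j)}=\bar{X}_i^{(j+1)}$ for $i\geq j$, so each nonzero $\bar{X}_i^{(j+1)}$ for $i\in\llbracket j,N\rrbracket$ belongs to $B^{(j+1)}\cap\Sigma_j=\Sigma_{j+1}$ by the inductive hypothesis, yielding (i) for $B^{(j+1)}$ and ensuring that the set $Z_j=\{(\bar{X}_j^{(j+1)})^n\mid n\in\N\}$ is contained in $\Sigma_{j+1}\cap\Sigma_j$.

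In this second case, to obtain (ii) and (iii) I invoke Proposition~\ref{propconfusedalgs2}(iii), which gives the chain
\begin{equation*}
B^{(j+1)}\subseteq B^{(j+1)}Z_j^{-1}=B^{(j)}Z_j^{-1}\subseteq B^{(j)}\Sigma_j^{-1}.
\end{equation*}
Following the rewriting argument of Proposition~\ref{propconfusedalgs} verbatim, any $a=yu^{-1}\in B^{(j)}\Sigma_j^{-1}$ can be expressed, using that $u\in\Sigma_j\subseteq B^{(j)}Z_j^{-1}=B^{(j+1)}Z_j^{-1}$, as $a=(xs_1)(vs_2)^{-1}$ with $xs_1\in B^{(j+1)}$ and $vs_2\in B^{(j+1)}\cap\Sigma_j=\Sigma_{j+1}$. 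The symmetric left-handed rewriting, which uses that $S_j$ (and hence $Z_j$) is a two-sided Ore set in both $B^{(j)}$ and $B^{(j+1)}$ by Proposition~\ref{propconfusedalgs2}(iii), places $a$ in $\Sigma_{j+1}^{-1}B^{(j+1)}$ as well. This simultaneously verifies the two-sided Ore condition for $\Sigma_{j+1}$ in $B^{(j+1)}$, establishing (ii), and delivers the inclusion $B^{(j)}\Sigma_j^{-1}\subseteq B^{(j+1)}\Sigma_{j+1}^{-1}$; the reverse inclusion is automatic from $B^{(j+1)}\subseteq B^{(j)}\Sigma_j^{-1}$ and $\Sigma_{j+1}\subseteq\Sigma_j$, yielding (iii).

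The main obstacle is purely organisational rather than mathematical: ensuring that the Cauchon-style rewriting identity transfers cleanly to the quotient setting despite the fact that $B^{(j+1)}$ and $B^{(j)}$ are realised as distinct subalgebras of $\Frac(B)$ rather than as concrete Ore extensions, and that the degenerate case $\bar{X}_j^{(j+1)}=0$ is dispatched separately so that invertibility of $\bar{X}_j^{(j+1)}$ in the localisation is not assumed where it fails.
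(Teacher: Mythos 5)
Your proof is correct and follows essentially the same route as the paper's: induction on $j$, the base case $j=2$ handled by the explicit description of $\Sigma_2=\Sigma$ in $B'$, and the inductive step split by whether $\bar{X}_j^{(j+1)}$ vanishes, with Proposition~\ref{propconfusedalgs2} supplying the transition and the Cauchon rewriting argument from Proposition~\ref{propconfusedalgs} reused to conclude (ii) and (iii). The only minor cosmetic difference is that the paper phrases the induction hypothesis as (i) and (ii) plus the one-step localisation equality rather than the cumulative (iii), but the content is identical.
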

\begin{proof}
We proceed by induction on $j$. When $j=2$, statements (i) and (ii) are immediately satisfied by the discussion preceding this proposition. Fix $j\in \llbracket 2, N \rrbracket$ and assume statements (i) and (ii) are true. We show that these properties are also true when replacing $j$ with $j+1$ and that $B^{(j)}\Sigma_j^{-1}=B^{(j+1)}\Sigma_{j+1}^{-1}$.  We consider two cases: $\bar{X}_j^{(j+1)}=0$ and $\bar{X}_j^{(j+1)}\neq 0$.

When $\bar{X}_j^{(j+1)}=0$ we apply Proposition \ref{propconfusedalgs2}(i) to obtain $\bar{X}_i^{(j)}=\bar{X}_i^{(j+1)}$ for all $i\in \llbracket 1, N \rrbracket$. Therefore $B^{(j)}=B^{(j+1)}$, and statements (i) and (ii) follow immediately by the inductive hypothesis and the fact that $\Sigma_{j+1}\subseteq \Sigma_j$.

Now suppose $\bar{X}_j^{(j+1)}\neq 0$. Applying Proposition \ref{propconfusedalgs2}(ii) gives $\bar{X}_i^{(j)}=\bar{X}_i^{(j+1)}$ for all $i \geq j$ and, by the induction hypothesis, we know that
\[ \{\bar{X}_i^{(j)} \mid i \in \llbracket j-1, N \rrbracket, \; \bar{X}_i^{(j)} \neq 0 \} ~\subseteq~ \Sigma_j. \]
Therefore,
\begin{align*}
\Sigma_{j+1} := B^{(j+1)} \cap  \Sigma_j &{} ~\supseteq~ B^{(j+1)} \cap \{\bar{X}_i^{(j)} \mid i\in \llbracket j-1, N \rrbracket, \; \bar{X}_i^{(j)}\neq 0 \} \\
&{} ~=~ \{ \bar{X}_i^{(j+1)} \mid i \in \llbracket j, N \rrbracket, \; \bar{X}_i^{(j+1)}\neq 0\}.
\end{align*}
The set $\Sigma_j$ is multiplicatively closed, by the induction hypothesis, which means $\Sigma_{j+1}$ is multiplicatively closed and it contains regular elements because $P_{j+1}$ is a completely prime ideal. Hence $B^{(j+1)}$ is a domain in which all nonzero elements are regular. 
This proves part (i).

Note that $Z_j\subset \Sigma_{j+1}$ and $Z_j \subset \Sigma_j$.  Applying Proposition \ref{propconfusedalgs2}(iii), we see that
\[B^{(j+1)} \subset B^{(j+1)}Z_j^{-1}=B^{(j)}Z_j^{-1}\subset B^{(j)}\Sigma_j^{-1}. \]
These inclusions mirror those found in (\ref{eqnconfusedalg}), and we may apply the rest of the method used in the proof of Proposition \ref{propconfusedalgs} to conclude parts (ii) and (iii) of this proposition.
\end{proof}

An immediate consequence of this proposition is that we can now show equivalence of the total rings of fractions of $A/P$ and $A'/\psi(P)$ in the case where $P\in \mathrm{C.Spec}_w(A)$, for some $w \in \mathcal{W}'$.

\begin{theorem}\label{thmDDAonQuotients}
	Let $w\in \mathcal{W}', \: P\in \CSpec_w(A)$, and $Q=\psi(P)\in \CSpec_w(A')$. Let $\Sigma$ be the multiplicatively closed set of elements in $A'/Q$ which is generated by all the generators $t_i$ such that $i\in \bar{w}=\{1,\ldots, N\} \backslash w$, where $t_i$ is the canonical image of $T_i$ in $A'/Q$. Then,
	\begin{enumerate}[(i)]
		\item $\Sigma$ is a multiplicatively closed set of regular, normal elements in $A'/Q$. 
		
		\item There exists a multiplicatively closed set of regular elements, $\Gamma$, in $A/P$ which satisfies the two-sided Ore condition in $A/P$ and is such that $(A/P)\Gamma^{-1} = (A'/Q)\Sigma^{-1}$.
		\item $\Frac(A/P)=\Frac(A'/Q)$. Furthermore, $A/P$ and $A'/Q$ are PI algebras if all $\lambda_{i,j}$ with $i, j \in \bar{w}$ are roots of unity and, in this case, $\text{PI-deg}(A/P)=\text{PI-deg}(A'/Q)$.\label{thmDDAonQuotients.iii}
	\end{enumerate}
\end{theorem}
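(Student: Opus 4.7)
The plan is to dispatch the three assertions in sequence, using Proposition~\ref{propconfusedalgs3} and Lemma~\ref{lemFrac(U)=Frac(V)} as the main inputs and exploiting the fact that the quotient $A'/Q$ is itself a smaller quantum affine space. For part~(i), each generator $T_i$ of the quantum affine space $A'$ is normal, since it skew-commutes with every other generator up to a nonzero scalar, so its image $t_i$ in $A'/Q$ remains normal. For $i\in\bar{w}$ we have $T_i\notin Q$, so $t_i\neq 0$ is regular in the domain $A'/Q$. Because the $t_i$ pairwise skew-commute, the multiplicative set $\Sigma$ they generate automatically consists of regular normal elements.

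For part~(ii), I would take $\Gamma:=\Sigma_{N+1}$ in the chain of sets introduced just before Proposition~\ref{propconfusedalgs3}. Specialising parts~(i) and~(ii) of that proposition at $j=N+1$ immediately gives that $\Gamma$ is a multiplicatively closed set of regular elements of $A/P=B^{(N+1)}$ satisfying the two-sided Ore condition. Iterating the equalities from part~(iii) of the same proposition telescopes to
\[(A/P)\Gamma^{-1}=B^{(N+1)}\Sigma_{N+1}^{-1}=\cdots=B^{(2)}\Sigma_{2}^{-1}=(A'/Q)\Sigma^{-1},\]
which is precisely the common localisation required.

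For part~(iii), I would treat the fraction-field identity and the PI assertion separately. Proposition~\ref{propUjSubalgFracU} places every $B^{(j)}$ inside the ambient skew field $\Frac(A/P)$, and iterating Lemma~\ref{lemFrac(U)=Frac(V)} from $j=N$ down to $j=2$ yields
\[\Frac(A/P)=\Frac(B^{(N+1)})=\cdots=\Frac(B^{(2)})=\Frac(A'/Q).\]
For the PI statement, note that $A'/Q$ is isomorphic to the quantum affine space $\K_{\Lambda'}[t_i:i\in\bar{w}]$, where $\Lambda'$ is the submatrix of $\Lambda$ indexed by $\bar{w}$; when all these $\lambda_{i,j}$ are roots of unity, \cite[Proposition~7.1]{DeConciniProcesi} shows $A'/Q$ is PI, and via the common Ore localisation from~(ii) the same holds for $A/P$, with the two algebras sharing a PI-degree.

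The delicate step, and the one I would write out most carefully, is this final transfer of PI-degree across the localisation in~(iii): one must combine the standard fact that for a prime Noetherian PI ring the PI-degree coincides with that of its Goldie quotient ring (via Posner's theorem) with the observation that $A/P$, $(A/P)\Gamma^{-1}$, and $A'/Q$ all share the same Goldie quotient by the fraction-field identity just established. Everything else reduces to bookkeeping once the machinery of Proposition~\ref{propconfusedalgs3} is in hand.
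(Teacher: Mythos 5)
Your proof follows the same route the paper implies -- the paper explicitly presents this theorem as an ``immediate consequence'' of Proposition~\ref{propconfusedalgs3} and the surrounding discussion, and you have correctly assembled those pieces: part~(i) from the normality and regularity of the $t_i$, part~(ii) by taking $\Gamma=\Sigma_{N+1}$ and telescoping through Proposition~\ref{propconfusedalgs3}(iii), and part~(iii) by iterating Lemma~\ref{lemFrac(U)=Frac(V)} and invoking Posner's theorem to transfer PI-degree across the common Goldie quotient. This last point is exactly the mechanism the paper uses for the analogous statement in Theorem~\ref{thmPIdegs}.

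One small inaccuracy in your part~(iii): you assert that $A'/Q$ \emph{is isomorphic to} the quantum affine space $\K_{\Lambda'}[t_i : i\in\bar{w}]$. That identification holds only when $P$ is a Cauchon ideal, i.e.\ when $Q=J_w=\langle T_i : i\in w\rangle$. For general $P\in\CSpec_w(A)$, the ideal $Q$ can properly contain $J_w$, so $A'/Q$ is merely a (possibly proper) quotient of $\K_{\Lambda'}[t_i : i\in\bar{w}]$. Your conclusion that $A'/Q$ is PI still stands, though, since PI rings pass to quotients; combined with $A/P\subseteq (A/P)\Gamma^{-1}=(A'/Q)\Sigma^{-1}$ this makes $A/P$ PI as a subring of a PI ring, and Posner on the shared total ring of fractions then equalises the PI degrees. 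So the argument survives, but you should state the weaker ``quotient of a quantum affine space'' rather than ``isomorphic to a quantum affine space'' to keep the proof airtight for non-Cauchon ideals.
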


The following ideals are defined for later use as they ensure that at the end of the deleting derivations algorithm on $A/P$ we obtain a quantum affine space $A'/\psi(P)$.
\begin{definition}\label{defnCauchonIdeal}
Let $w\in \mathcal{W}'$. We call $P_w\in \CSpec_w(A)$ a \emph{Cauchon ideal} if  $\psi(P_w)=J_w := \langle T_i \in A' \mid i \in w \rangle \in \CSpec_w(A')$.
\end{definition}

\begin{remark}
It can be shown that a rational torus action on $A$ induces a rational torus action on $A'$ and that $\psi$ sends torus-invariant completely prime ideals in $A$ to torus-invariant completely prime ideals in $A'$. This matches analogous results in the generic setting \cite{Cauchon} and the Poisson setting \cite{LaunoisLecoutre}. Unlike in those settings, this observation does not immediately yield results allowing us to determine explicitly what $W'$ is for specific algebras $A$. For this reason, we omit these results from this paper.
\end{remark}

\section{PI degree of completely prime quotients of quantum matrices at roots of unity}\label{sectionPIdegCPQofQMs}

In this section, we consider iterated Ore extensions $A=\K[X_1][X_2;\sigma_2,\delta_2]\ldots [X_N;\sigma_N,\delta_N]$ satisfying Hypothesis \ref{hyp1} {\bf at roots of unity}. That is, we assume that all the parameters $\lambda_{i,j}$ from Hypothesis \ref{hyp1} are roots of unity. 

In that case, it follows from \cite[Theorem 1.2]{Haynal} that the algebra $A$ is PI and that, with the notation of the previous section: 
$$\PI(A) =\PI(A') .$$

Our aim in this section is to provide techniques to compute the PI degree of completely prime quotients of $A$. In particular, we will focus on the case where $A$ is a single parameter quantum matrix algebra (at roots of unity).

\subsection{PI degree of completely prime quotients of $A$}

Under our assumptions, $A$ is a PI algebra and this implies that every completely prime quotient of $A$ is a PI algebra too. Our next aim is to compute the PI degree and construct irreducible representations of maximal dimension of quotients by Cauchon ideals.

\subsection{PI degree of quotients by a Cauchon ideal}\label{sectionPIdegCauchonIdeal}

In the case where $P_w$ is the Cauchon ideal associated to a Cauchon diagram $w \in \mathcal W'$, we get that $\psi(P_w) = \langle T_i ~|~ i \in w\rangle$ and so $A'/Q$ is a quantum affine space. Namely, $A'/Q=\oh_{{\bf \Lambda_{w}}}(\K^{N-|w|})$, where ${\bf \Lambda_{w}}$ is the matrix deduced from ${\bf \Lambda}=(\lambda_{ij})$ by removing rows and columns indexed by elements of $w$.

 In view of Theorem~\ref{thmDDAonQuotients}(\ref{thmDDAonQuotients.iii}) and the above discussion, we obtain the following result.
 
 \begin{theorem}
 \label{PI2}
 Let $P_w$ be the Cauchon ideal associated to a Cauchon diagram $w \in \mathcal W'$. Then $\PI(A/P_w)=\PI(\oh_{{\bf \Lambda_{w}}}(\K^{N-|w|}))$.
 \end{theorem}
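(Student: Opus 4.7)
The proof is essentially a direct combination of results already established in the paper. The plan is to identify $A'/\psi(P_w)$ explicitly as the quantum affine space $\oh_{\Lambda_w}(\K^{N-|w|})$, and then invoke Theorem~\ref{thmDDAonQuotients}(\ref{thmDDAonQuotients.iii}) to transfer the PI degree computation from $A/P_w$ to this quotient.

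More precisely, by Definition~\ref{defnCauchonIdeal}, saying that $P_w$ is a Cauchon ideal means exactly that $\psi(P_w) = J_w = \langle T_i \mid i \in w\rangle$. Since $A' = \K_{\Lambda}[T_1,\ldots,T_N]$ is a quantum affine space by Corollary~\ref{maincorollary}, quotienting by the ideal generated by the subset of generators $\{T_i \mid i \in w\}$ yields a smaller quantum affine space on the remaining generators $\{t_i \mid i \in \bar w\}$, with commutation matrix obtained from $\Lambda$ by deleting the rows and columns indexed by $w$. That is,
\[
A'/\psi(P_w) \;=\; A'/J_w \;\cong\; \oh_{\Lambda_w}(\K^{N-|w|}).
\]

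Next I would invoke Theorem~\ref{thmDDAonQuotients}(\ref{thmDDAonQuotients.iii}): since we are working at roots of unity by the standing assumption of Section~\ref{sectionPIdegCPQofQMs}, every $\lambda_{i,j}$ is a root of unity, in particular for $i,j \in \bar w$. Hence both $A/P_w$ and $A'/\psi(P_w)$ are PI and $\PI(A/P_w) = \PI(A'/\psi(P_w))$. Combining this with the isomorphism above gives
\[
\PI(A/P_w) \;=\; \PI(A'/\psi(P_w)) \;=\; \PI\!\bigl(\oh_{\Lambda_w}(\K^{N-|w|})\bigr),
\]
which is exactly the statement of the theorem.

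There is no real obstacle here; the substantive work has already been done in Theorem~\ref{thmDDAonQuotients} (equality of PI degrees across the canonical embedding) and in the structural description of Cauchon ideals. The only thing to verify carefully is the identification $A'/J_w \cong \oh_{\Lambda_w}(\K^{N-|w|})$, which is immediate from the presentation of $A'$ as a quantum affine space and the fact that $J_w$ is generated by a subset of the normal generators $T_i$.
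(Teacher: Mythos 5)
Your proof is correct and matches the paper's own argument essentially verbatim: identify $A'/\psi(P_w) = A'/J_w$ as the quantum affine space $\oh_{\Lambda_w}(\K^{N-|w|})$ via Definition~\ref{defnCauchonIdeal}, then apply Theorem~\ref{thmDDAonQuotients}(\ref{thmDDAonQuotients.iii}) under the standing root-of-unity assumption. No gaps.
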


De Concini and Procesi developed techniques to compute the PI degree of a quantum affine space. More precisely, they prove the following result.

\begin{theorem}\cite[Proposition 7.1]{DeConciniProcesi}
Let $S= (s_{ij})$ be a multiplicatively skew-symmetric $n\times n$ matrix with coefficients in $\K$. Then:
\begin{enumerate}
\item If all $s_{ij}$ are roots of unity, then there exists a primitive root of unity $q\in \K^{\times}$ and integers $a_{ij}$ such that $s_{ij}=q^{a_{ij}}$ for all $i,j$.
\item Assume $s_{ij}=q^{a_{ij}}$ for all $i,j$, where $q$ is a primitive $\ell^{\mbox{th}}$ root of unity and $a_{ij}\in \Z$ for all $i,j$. Set $M:=(a_{ij})\in M_{n}(\Z)$; this is a skew-symmetric matrix. Then the PI degree of the corresponding quantum affine space $\oh_{q^M} (\K^n):= \oh_{S}(\K^{n})$ is $\sqrt{h}$, where $h$ is the cardinality of the image of the homomorphism
\[\begin{tikzcd}
 \Z^{n} \arrow{r}{M} & \Z^{n} \arrow{r}{\pi} & (\Z/\ell\Z)^n,
\end{tikzcd}\]
where $\pi$ denotes the canonical epimorphism. 
\end{enumerate} 
\end{theorem}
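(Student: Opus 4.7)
The plan is to handle the two parts separately: the first is essentially a group-theoretic observation, while the second exploits the structure of the quantum torus $\mathcal{T}:=\K_{q^M}[T_1^{\pm 1},\ldots,T_n^{\pm 1}]$ as a module over its center.

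For part (1), I would use the fact that every finite subgroup of $\K^\times$ is cyclic. Since each $s_{ij}$ is a root of unity, the multiplicative subgroup of $\K^\times$ they generate is finite and hence cyclic; if $\ell$ denotes its order (equivalently the LCM of the orders of the $s_{ij}$), any generator $q$ is a primitive $\ell^{\mbox{th}}$ root of unity, and we may write $s_{ij}=q^{a_{ij}}$ for suitable $a_{ij}\in \Z$. Multiplicative skew-symmetry of $S$ forces $a_{ji}\equiv -a_{ij}\pmod{\ell}$, and one may adjust the integer representatives within their residue classes to make $M=(a_{ij})$ genuinely skew-symmetric in $M_n(\Z)$.

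For part (2), set $A:=\oh_{q^M}(\K^n)$, so that $\mathcal{T}$ is a localisation of $A$ sharing the same ring of fractions $\mathcal{D}:=\Frac(A)$. Posner's theorem gives $\PI(A)=\sqrt{[\mathcal{D}:Z(\mathcal{D})]}$, so it suffices to compute this index. A monomial $T^\alpha:=T_1^{\alpha_1}\cdots T_n^{\alpha_n}$ in $\mathcal{T}$ satisfies $T_i T^\alpha = q^{(M\alpha)_i} T^\alpha T_i$, so $T^\alpha$ is central precisely when $M\alpha\in \ell\Z^n$, i.e.\ $\alpha\in L:=\ker(\pi\circ M)$. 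A grading argument applied to a general element of $\mathcal{T}$, expressed as a $\K$-linear combination of distinct monomials, then identifies $Z(\mathcal{T})$ with the $\K$-span of $\{T^\alpha : \alpha\in L\}$.

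The final step is to choose coset representatives $\beta_1,\ldots,\beta_h$ of $\Z^n/L$ (by the first isomorphism theorem applied to $\pi\circ M$ one has $|\Z^n/L|=|\Ima(\pi\circ M)|=h$) and verify that $\mathcal{T}$ is a free $Z(\mathcal{T})$-module of rank $h$ with basis $T^{\beta_1},\ldots,T^{\beta_h}$. Spanning is routine by collecting monomials coset by coset, and freeness follows from $\K$-linear independence of distinct $T^\gamma$; the key computational check is that for $\alpha\in L$ one has $T^\alpha T^{\beta_i}=T^{\alpha+\beta_i}$ without an extra scalar, which uses that $\alpha^{t}M\in \ell\Z^n$ by skew-symmetry of $M$. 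Tensoring up to $\mathcal{D}=\mathcal{T}\otimes_{Z(\mathcal{T})}\Frac(Z(\mathcal{T}))$ then gives $[\mathcal{D}:Z(\mathcal{D})]=h$, hence $\PI(A)=\sqrt{h}$. The main obstacle I anticipate is the identification of $Z(\mathcal{T})$ with the monomial subalgebra: ruling out nontrivial $\K$-linear combinations of non-central monomials from accidentally becoming central requires a careful grading/linear-independence argument (exploiting the $\Z^n/L$-grading induced by commutation with the $T_i$).
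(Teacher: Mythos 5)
The paper does not prove this statement: it is imported verbatim as \cite[Proposition 7.1]{DeConciniProcesi}, so there is no in-paper argument to compare yours against. That said, your reconstruction is essentially the standard proof, which is also the one used in \cite{DeConciniProcesi} and expounded in \cite[I.14]{BrownGoodearl}: pass to the associated quantum torus $\mathcal{T}$ (same division ring of fractions), use the $\Z^n$-grading to identify $Z(\mathcal{T})$ with the $\K$-span of $\{T^\alpha : \alpha\in L\}$ where $L=\ker(\pi\circ M)$, show $\mathcal{T}$ is a free $Z(\mathcal{T})$-module of rank $h=[\Z^n:L]=|\Ima(\pi\circ M)|$, and convert rank into PI degree via Posner's theorem (together with $Z(\mathcal{D})=\Frac(Z(\mathcal{T}))$). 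Part (1) via cyclicity of finite subgroups of $\K^\times$ and the mod-$\ell$ adjustment to force $M$ genuinely skew-symmetric is also correct.

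One inaccuracy worth flagging: your ``key computational check'' that $T^\alpha T^{\beta_i}=T^{\alpha+\beta_i}$ with no scalar when $\alpha\in L$ is not generally true. Reordering $T^\alpha T^{\beta}$ into standard monomial form introduces the factor $q^{\sum_{i<j}a_{ji}\alpha_j\beta_i}$, and the partial sums $\sum_{j>i}a_{ij}\alpha_j$ need not lie in $\ell\Z$ even though the full row sums $(M\alpha)_i$ do. Fortunately, this does not affect the argument: for both spanning and freeness you only need that $z\,T^{\beta_i}$ (for $z\in Z(\mathcal{T})$) is supported on the single coset $\beta_i+L$ in the $\Z^n/L$-grading, and nonzero scalar prefactors are irrelevant when verifying that $T^{\beta_1},\ldots,T^{\beta_h}$ form a $Z(\mathcal{T})$-basis. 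You should simply drop the claim that the scalar is $1$ and let the grading argument carry the load, as your own closing remark already suggests.
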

 
 The above result together with Theorem~\ref{PI2} allows us to compute the PI degree of quotients by Cauchon ideals. 
 
 In the following section, we illustrate this in the case when $A$ is a single parameter quantum matrix algebra. 
 
 \subsection{PI degree of quotients of single parameter quantum matrix algebras by Cauchon ideals}
  
 In this section, we assume that $\lambda_{ij}=q^{m_{ij}}$ for all $i,j$, where $q$ is a primitive $\ell^{\mbox{th}}$ root of unity and $m_{ij}\in \Z$ for all $i,j$. In this case, we say that $A$ is {\em uniparameter}. We set $M:=(m_{ij}) \in M_N(\K)$, so that $M$ is a skew-symmetric integral matrix. It is a well-known result that every skew-symmetric integral matrix $S$ is congruent (in the sense of \cite[Chapter IV]{Newman}) to its skew-normal form, that is, to a block diagonal matrix of the form
\[ S=\left(\begin{smallmatrix}
 0 & h_1 & & & & & & & \\
 -h_1 & 0 & & & & & & & \\
 & & 0 & h_2 & & & & & \\
& & -h_2 & 0 & & & & & \\
 & & & & \ddots & & & \\
 & & & & & 0 & h_s & \\
 & & & & & -h_s & 0 & \\
 & & & & & & & \mathbf{0} 
\end{smallmatrix}\right), \]
where $\mathbf{0}$ is a square matrix of zeros of dimension $\dim(\ker(S))$ so that $2s=N-\dim(\ker(S))$, and $h_1,\, h_1,\, h_2,\, h_2,\, \ldots,\, h_s,\, h_s \in \Z\backslash \{0\}$ are called the \emph{invariant factors} of $M$. As they always come in pairs, from now on we will avoid repetition and list the invariant factors simply as $h_1,\, h_2,\,  \ldots,\, h_s$. These have the property that $h_i|h_{i+1}$ for all $i\in \llbracket 1, s-1 \rrbracket$.

\begin{theorem}\label{thmPIdegAgain}
	Assume $A$ is a uniparameter iterated Ore extension satisfying Hypothesis \ref{hyp1} with parameter $q$ being a primitive $\ell^{\mbox{th}}$ root of unity. Let $w\in \mathcal{W}'$ and $P_w:=\psi^{-1}(J_w)$ be the corresponding Cauchon ideal. Then 
	$$\PI(A/P_{w})=\prod_{i=1}^{\frac{N-\dim(\ker(M(w)))}{2}} \frac{\ell}{\gcd(h_i, \ell) },$$
where the $h_i$ are the invariant factors of the matrix $M(w)$ deduced from $M$ by removing rows and columns indexed by $w$.
		\end{theorem}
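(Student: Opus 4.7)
The plan is to reduce the problem to computing the PI degree of a quantum affine space via Theorem~\ref{PI2}, then invoke the De Concini--Procesi formula, and finally unwind the cardinality of the image using the skew-normal form of $M(w)$. Because $P_w$ is a Cauchon ideal, $\psi(P_w)=J_w$ and $A'/J_w \cong \oh_{\Lambda_w}(\K^{N-|w|})$, the quantum affine space whose commutation matrix $\Lambda_w$ is obtained from $\Lambda$ by deleting the rows and columns indexed by $w$. Theorem~\ref{PI2} then gives $\PI(A/P_w)=\PI(\oh_{\Lambda_w}(\K^{N-|w|}))$. Under the uniparameter assumption $\lambda_{ij}=q^{m_{ij}}$ one has $\Lambda_w=q^{M(w)}$, so the computation reduces to that of $\PI(\oh_{q^{M(w)}}(\K^{N-|w|}))$.

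For this I would appeal to \cite[Proposition~7.1]{DeConciniProcesi}, recalled above: the PI degree equals $\sqrt{h}$, where $h=|\Ima(\pi\circ M(w))|$ and $\pi\colon \Z^{N-|w|}\to(\Z/\ell\Z)^{N-|w|}$ is the canonical surjection, $M(w)$ being viewed as a $\Z$-linear endomorphism. (Alternatively, one could quote \cite[Lemma~2.4]{BellLaunoisRogers} directly.) To evaluate $h$, I would pass to the skew-normal form of $M(w)$: choose $U\in GL_{N-|w|}(\Z)$ such that $U^{T}M(w)U=S$, with $S$ block-diagonal, consisting of the $2\times 2$ blocks $\left(\begin{smallmatrix}0 & h_i\\ -h_i & 0\end{smallmatrix}\right)$ for $i=1,\dots,s$ followed by a zero block of size $\dim\ker M(w)$. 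Since $U$ and $U^{T}$ are $\Z$-module automorphisms of $\Z^{N-|w|}$ and reduce to automorphisms of $(\Z/\ell\Z)^{N-|w|}$, they preserve the cardinality of the mod-$\ell$ image; hence $h=|\Ima(\pi\circ S)|$.

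Reading $S$ block-by-block identifies $\Ima(\pi\circ S)$ with a direct sum of $2s$ cyclic subgroups, each of the form $h_i\Z/\ell\Z\cong \Z/(\ell/\gcd(h_i,\ell))\Z$, contributing two factors of $\ell/\gcd(h_i,\ell)$ per block. Therefore
\[
h=\prod_{i=1}^{s}\bigl(\ell/\gcd(h_i,\ell)\bigr)^{2},
\]
and extracting the square root yields the claimed product formula for $\PI(A/P_w)$. The argument is essentially bookkeeping once Theorem~\ref{PI2} is in hand; the only subtle point is the invariance of $h$ under congruence (rather than similarity) of $M(w)$ with $S$, which is immediate from $\det U=\pm 1$ and its reduction modulo $\ell$. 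Accordingly I do not anticipate any serious obstacle beyond this verification.
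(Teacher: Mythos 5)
Your argument is correct and follows precisely the route the paper implicitly takes: reduce to the quantum affine space $\oh_{\Lambda_w}(\K^{N-|w|})$ via Theorem~\ref{PI2}, feed this into De Concini--Procesi, and evaluate the image cardinality by passing to the skew-normal form (which is exactly what the cited \cite[Lemma~2.4]{BellLaunoisRogers} packages). The one point worth flagging for clarity is the indexing: your product runs over $i=1,\dots,s$ where $2s=\rank(M(w))=(N-|w|)-\dim\ker M(w)$; the paper writes the upper limit as $\frac{N-\dim\ker M(w)}{2}$, which agrees with $s$ only under the convention that $\ker M(w)$ is counted inside $\Q^N$ (equivalently, viewing $M(w)$ as the $N\times N$ matrix with zero rows and columns on $w$), a convention consistent with the identification $\dim\ker M(w)=r(w)$ used later in Theorem~\ref{thmPIdegCauchonIdeal}. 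It would be worth a sentence reconciling your $s$ with the stated upper bound. Beyond that, your verification that congruence by $U\in GL_{N-|w|}(\Z)$ preserves $|\Ima(\pi\circ M(w))|$ (because $U$ reduces to an automorphism of $(\Z/\ell\Z)^{N-|w|}$) is exactly the small lemma one must check, and your block-by-block count giving $h=\prod_i(\ell/\gcd(h_i,\ell))^2$ is correct.
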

		
We finish this section by studying the case of single parameter quantum matrices. That is, we assume that $A= \oh_{q}(M_{m,n}(\K))$ with $q$ being a primitive $\ell^{\mbox{th}}$ root of unity. Recall that the quantised coordinate ring of $m\times n$ matrices over $\K$ denoted by $\oh_q(M_{m,n}(\K))$ is the $\K$-algebra generated by $m\times n$ indeterminates $\{X_{i,j}\}_{i,j=1}^{m,n}$ subject to the following relations: for $(1,1) \leq  (i,j) < (s,t) \leq (m,n)$ (in lexicographic ordering), we have
\begin{align*}
X_{i,j}X_{s,t}= \begin{cases} 
							X_{s,t}X_{i,j} & i<s, \, j>t; \\
							qX_{s,t}X_{i,j} & (i=s, \, j<t) \text{ or } (i<s, \, j=t); \\
							X_{s,t}X_{i,j}+(q-q^{-1})X_{i,t}X_{s,j} & i<s, \, j<t.							
				 \end{cases}
\end{align*}

Under our assumption that $q$ is a primitive $\ell^{\mbox{th}}$ root of unity, the algebra $A= \oh_{q}(M_{m,n}(\K))$  is a  uniparameter iterated Ore extension (with the generators added in the lexicographic order) satisfying Hypothesis \ref{hyp1}, see \cite[Section 5.3]{Haynal} for details.  

 Let $w$ be a Cauchon diagram and $P_w$ the corresponding Cauchon ideal in $A$. It follows from the above result that to compute the PI degree of $A/P_w$, we need to compute the dimension of $\ker (M(w))$ and the invariant factors of $M(w))$. This was done in \cite{BellCasteelsLaunois} and \cite{BellLaunoisRogers} respectively. Before stating our main result, we recall necessary results from these two articles.

Given any diagram $w \in \mathcal W$, we associate an $m \times n$ diagram $D(w)$, where $D(w)$ is the $m\times n$ grid whose square in position $(i,j)$ is coloured black if $(i,j) \in w$ and white if $(i,j) \notin w$.   We may compute its \emph{toric permutation} $\tau = \tau_w$, as defined in \cite[Section 4.1]{BellCasteelsLaunois}, by laying pipes over the squares such that we place a ``cross" on each black square and a ``hyperbola" on each white square. We label the sides of the diagram with the numbers $1,\ldots, m+n$ such that each pair of opposite sides share the same labels in the same order. The permutation, $\tau_w$, may then be read off this diagram by defining $\tau_w(i)$ to be the label (on the left or top side of $D(w)$) reached by following the pipe starting at label $i$ (on the right or bottom side of $D(w)$). See Figure \ref{FigToricPerm} for an example of a diagram with $\tau=(17)(26384)$.

\definecolor{light-gray}{gray}{0.6}
\begin{figure}[h]
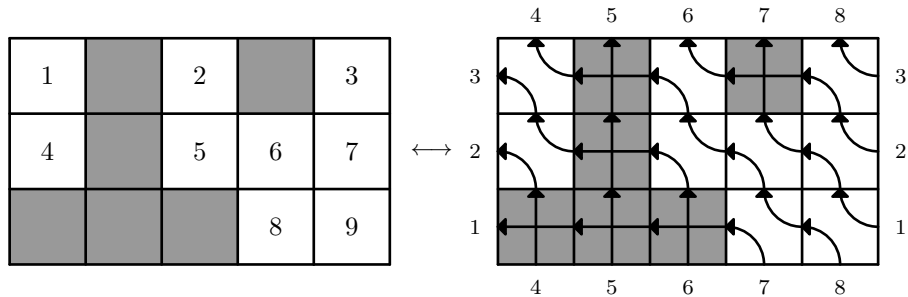

\begin{center}
\begin{tabular}{cc}
\begin{pgfpicture}{0cm}{0cm}{6cm}{4cm}%
\pgfsetroundjoin \pgfsetroundcap%
\pgfsetfillcolor{light-gray}
\pgfmoveto{\pgfxy(0.5,0.5)}\pgflineto{\pgfxy(0.5,1.5)}\pgflineto{\pgfxy(3.5,1.5)}\pgflineto{\pgfxy(3.5,0.5)}\pgflineto{\pgfxy(0.5,0.5)}\pgffill
\pgfmoveto{\pgfxy(1.5,1.5)}\pgflineto{\pgfxy(1.5,3.5)}\pgflineto{\pgfxy(2.5,3.5)}\pgflineto{\pgfxy(2.5,1.5)}\pgflineto{\pgfxy(1.5,1.5)}\pgffill
\pgfmoveto{\pgfxy(3.5,2.5)}\pgflineto{\pgfxy(3.5,3.5)}\pgflineto{\pgfxy(4.5,3.5)}\pgflineto{\pgfxy(4.5,2.5)}\pgflineto{\pgfxy(3.5,2.5)}\pgffill
\pgfsetlinewidth{1pt}
\pgfxyline(0.5,0.5)(4.5,0.5)
\pgfxyline(0.5,1.5)(4.5,1.5)
\pgfxyline(0.5,2.5)(4.5,2.5)
\pgfxyline(0.5,3.5)(4.5,3.5)
\pgfxyline(0.5,0.5)(0.5,3.5)
\pgfxyline(1.5,0.5)(1.5,3.5)
\pgfxyline(2.5,0.5)(2.5,3.5)
\pgfxyline(3.5,0.5)(3.5,3.5)
\pgfxyline(4.5,0.5)(4.5,3.5)
\pgfxyline(5.5,0.5)(5.5,3.5)
\pgfxyline(4.5,0.5)(5.5,0.5)
\pgfxyline(4.5,1.5)(5.5,1.5)
\pgfxyline(4.5,2.5)(5.5,2.5)
\pgfxyline(4.5,3.5)(5.5,3.5)

%Label
\pgfputat{\pgfxy(1,3)}{\pgfnode{rectangle}{center}{\color{black} $1$}{}{\pgfusepath{}}}
\pgfputat{\pgfxy(3,3)}{\pgfnode{rectangle}{center}{\color{black} $2$}{}{\pgfusepath{}}}
\pgfputat{\pgfxy(5,3)}{\pgfnode{rectangle}{center}{\color{black} $3$}{}{\pgfusepath{}}}
\pgfputat{\pgfxy(1,2)}{\pgfnode{rectangle}{center}{\color{black} $4$}{}{\pgfusepath{}}}
\pgfputat{\pgfxy(3,2)}{\pgfnode{rectangle}{center}{\color{black} $5$}{}{\pgfusepath{}}}
\pgfputat{\pgfxy(4,2)}{\pgfnode{rectangle}{center}{\color{black} $6$}{}{\pgfusepath{}}}
\pgfputat{\pgfxy(5,2)}{\pgfnode{rectangle}{center}{\color{black} $7$}{}{\pgfusepath{}}}
\pgfputat{\pgfxy(4,1)}{\pgfnode{rectangle}{center}{\color{black} $8$}{}{\pgfusepath{}}}
\pgfputat{\pgfxy(5,1)}{\pgfnode{rectangle}{center}{\color{black} $9$}{}{\pgfusepath{}}}

\pgfputat{\pgfxy(6,2)}{\pgfnode{rectangle}{center}{\color{black}\footnotesize $  ~~~~\longleftrightarrow~~~$}{}{\pgfusepath{}}}
\end{pgfpicture}

 &

\begin{pgfpicture}{0cm}{0cm}{6cm}{4cm}%
\pgfsetroundjoin \pgfsetroundcap%
\pgfsetfillcolor{light-gray}
\pgfmoveto{\pgfxy(0.5,0.5)}\pgflineto{\pgfxy(0.5,1.5)}\pgflineto{\pgfxy(3.5,1.5)}\pgflineto{\pgfxy(3.5,0.5)}\pgflineto{\pgfxy(0.5,0.5)}\pgffill
\pgfmoveto{\pgfxy(1.5,1.5)}\pgflineto{\pgfxy(1.5,3.5)}\pgflineto{\pgfxy(2.5,3.5)}\pgflineto{\pgfxy(2.5,1.5)}\pgflineto{\pgfxy(1.5,1.5)}\pgffill
\pgfmoveto{\pgfxy(3.5,2.5)}\pgflineto{\pgfxy(3.5,3.5)}\pgflineto{\pgfxy(4.5,3.5)}\pgflineto{\pgfxy(4.5,2.5)}\pgflineto{\pgfxy(3.5,2.5)}\pgffill
\pgfsetlinewidth{1pt}
\pgfxyline(0.5,0.5)(4.5,0.5)
\pgfxyline(0.5,1.5)(4.5,1.5)
\pgfxyline(0.5,2.5)(4.5,2.5)
\pgfxyline(0.5,3.5)(4.5,3.5)
\pgfxyline(0.5,0.5)(0.5,3.5)
\pgfxyline(1.5,0.5)(1.5,3.5)
\pgfxyline(2.5,0.5)(2.5,3.5)
\pgfxyline(3.5,0.5)(3.5,3.5)
\pgfxyline(4.5,0.5)(4.5,3.5)
\pgfxyline(5.5,0.5)(5.5,3.5)
\pgfxyline(4.5,0.5)(5.5,0.5)
\pgfxyline(4.5,1.5)(5.5,1.5)
\pgfxyline(4.5,2.5)(5.5,2.5)
\pgfxyline(4.5,3.5)(5.5,3.5)

%Box (1,1)
\pgfsetlinewidth{1pt}
\color{black}

\pgfmoveto{\pgfxy(0.5,3)}\pgfpatharc{90}{0}{0.5cm}
\pgfstroke
\pgfmoveto{\pgfxy(0.5,3)}\pgflineto{\pgfxy(0.6,3.1)}\pgflineto{\pgfxy(0.6,2.9)}\pgflineto{\pgfxy(0.5,3)}\pgfclosepath\pgffillstroke

\pgfmoveto{\pgfxy(1,3.5)}\pgfpatharc{180}{270}{0.5cm}
\pgfstroke
\pgfmoveto{\pgfxy(1,3.5)}\pgflineto{\pgfxy(0.9,3.4)}\pgflineto{\pgfxy(1.1,3.4)}\pgflineto{\pgfxy(1,3.5)}\pgfclosepath\pgffillstroke

%Box (1,2)
\color{black}
\pgfxyline(2,2.5)(2,3.5)
\pgfmoveto{\pgfxy(1.9,3.4)}\pgflineto{\pgfxy(2,3.5)}\pgflineto{\pgfxy(2.1,3.4)}\pgflineto{\pgfxy(1.9,3.4)}\pgfclosepath\pgffillstroke

\pgfxyline(1.5,3)(2.5,3)
\pgfmoveto{\pgfxy(1.5,3)}\pgflineto{\pgfxy(1.6,3.1)}\pgflineto{\pgfxy(1.6,2.9)}\pgflineto{\pgfxy(1.5,3)}\pgfclosepath\pgffillstroke

%Box (1,3)
\pgfmoveto{\pgfxy(2.5,3)}\pgfpatharc{90}{0}{0.5cm}
\pgfstroke
\pgfmoveto{\pgfxy(2.5,3)}\pgflineto{\pgfxy(2.6,3.1)}\pgflineto{\pgfxy(2.6,2.9)}\pgflineto{\pgfxy(2.5,3)}\pgfclosepath\pgffillstroke

\pgfmoveto{\pgfxy(3,3.5)}\pgfpatharc{180}{270}{0.5cm}
\pgfstroke
\pgfmoveto{\pgfxy(3,3.5)}\pgflineto{\pgfxy(2.9,3.4)}\pgflineto{\pgfxy(3.1,3.4)}\pgflineto{\pgfxy(3,3.5)}\pgfclosepath\pgffillstroke

%Box (1,4)

\pgfxyline(4,2.5)(4,3.5)
\pgfmoveto{\pgfxy(3.9,3.4)}\pgflineto{\pgfxy(4,3.5)}\pgflineto{\pgfxy(4.1,3.4)}\pgflineto{\pgfxy(3.9,3.4)}\pgfclosepath\pgffillstroke

\pgfxyline(3.5,3)(4.5,3)
\pgfmoveto{\pgfxy(3.5,3)}\pgflineto{\pgfxy(3.6,3.1)}\pgflineto{\pgfxy(3.6,2.9)}\pgflineto{\pgfxy(3.5,3)}\pgfclosepath\pgffillstroke

%Box (2,1)

\pgfmoveto{\pgfxy(0.5,2)}\pgfpatharc{90}{0}{0.5cm}
\pgfstroke
\pgfmoveto{\pgfxy(0.5,2)}\pgflineto{\pgfxy(0.6,2.1)}\pgflineto{\pgfxy(0.6,1.9)}\pgflineto{\pgfxy(0.5,2)}\pgfclosepath\pgffillstroke

\pgfmoveto{\pgfxy(1,2.5)}\pgfpatharc{180}{270}{0.5cm}
\pgfstroke
\pgfmoveto{\pgfxy(1,2.5)}\pgflineto{\pgfxy(0.9,2.4)}\pgflineto{\pgfxy(1.1,2.4)}\pgflineto{\pgfxy(1,2.5)}\pgfclosepath\pgffillstroke

%Box (2,2)

\pgfxyline(2,1.5)(2,2.5)
\pgfmoveto{\pgfxy(1.9,2.4)}\pgflineto{\pgfxy(2,2.5)}\pgflineto{\pgfxy(2.1,2.4)}\pgflineto{\pgfxy(1.9,2.4)}\pgfclosepath\pgffillstroke

\pgfxyline(1.5,2)(2.5,2)
\pgfmoveto{\pgfxy(1.5,2)}\pgflineto{\pgfxy(1.6,2.1)}\pgflineto{\pgfxy(1.6,1.9)}\pgflineto{\pgfxy(1.5,2)}\pgfclosepath\pgffillstroke

%Box (2,3)
\pgfmoveto{\pgfxy(2.5,2)}\pgfpatharc{90}{0}{0.5cm}
\pgfstroke
\pgfmoveto{\pgfxy(2.5,2)}\pgflineto{\pgfxy(2.6,2.1)}\pgflineto{\pgfxy(2.6,1.9)}\pgflineto{\pgfxy(2.5,2)}\pgfclosepath\pgffillstroke

\pgfmoveto{\pgfxy(3,2.5)}\pgfpatharc{180}{270}{0.5cm}
\pgfstroke
\pgfmoveto{\pgfxy(3,2.5)}\pgflineto{\pgfxy(2.9,2.4)}\pgflineto{\pgfxy(3.1,2.4)}\pgflineto{\pgfxy(3,2.5)}\pgfclosepath\pgffillstroke

%Box (2,4)

\pgfmoveto{\pgfxy(3.5,2)}\pgfpatharc{90}{0}{0.5cm}
\pgfstroke
\pgfmoveto{\pgfxy(3.5,2)}\pgflineto{\pgfxy(3.6,2.1)}\pgflineto{\pgfxy(3.6,1.9)}\pgflineto{\pgfxy(3.5,2)}\pgfclosepath\pgffillstroke

\pgfmoveto{\pgfxy(4,2.5)}\pgfpatharc{180}{270}{0.5cm}
\pgfstroke
\pgfmoveto{\pgfxy(4,2.5)}\pgflineto{\pgfxy(3.9,2.4)}\pgflineto{\pgfxy(4.1,2.4)}\pgflineto{\pgfxy(4,2.5)}\pgfclosepath\pgffillstroke

%Box (3,1)

\pgfxyline(1,0.5)(1,1.5)
\pgfmoveto{\pgfxy(0.9,1.4)}\pgflineto{\pgfxy(1,1.5)}\pgflineto{\pgfxy(1.1,1.4)}\pgflineto{\pgfxy(0.9,1.4)}\pgfclosepath\pgffillstroke

\pgfxyline(0.5,1)(1.5,1)
\pgfmoveto{\pgfxy(0.5,1)}\pgflineto{\pgfxy(0.6,1.1)}\pgflineto{\pgfxy(0.6,0.9)}\pgflineto{\pgfxy(0.5,1)}\pgfclosepath\pgffillstroke

%Box (3,2)

\pgfxyline(2,0.5)(2,1.5)
\pgfmoveto{\pgfxy(1.9,1.4)}\pgflineto{\pgfxy(2,1.5)}\pgflineto{\pgfxy(2.1,1.4)}\pgflineto{\pgfxy(1.9,1.4)}\pgfclosepath\pgffillstroke

\pgfxyline(1.5,1)(2.5,1)
\pgfmoveto{\pgfxy(1.5,1)}\pgflineto{\pgfxy(1.6,1.1)}\pgflineto{\pgfxy(1.6,0.9)}\pgflineto{\pgfxy(1.5,1)}\pgfclosepath\pgffillstroke

%Box (3,3)

\pgfxyline(3,0.5)(3,1.5)
\pgfmoveto{\pgfxy(2.9,1.4)}\pgflineto{\pgfxy(3,1.5)}\pgflineto{\pgfxy(3.1,1.4)}\pgflineto{\pgfxy(2.9,1.4)}\pgfclosepath\pgffillstroke

\pgfxyline(2.5,1)(3.5,1)
\pgfmoveto{\pgfxy(2.5,1)}\pgflineto{\pgfxy(2.6,1.1)}\pgflineto{\pgfxy(2.6,0.9)}\pgflineto{\pgfxy(2.5,1)}\pgfclosepath\pgffillstroke

%Box (3,4)
\pgfmoveto{\pgfxy(3.5,1)}\pgfpatharc{90}{0}{0.5cm}
\pgfstroke
\pgfmoveto{\pgfxy(3.5,1)}\pgflineto{\pgfxy(3.6,1.1)}\pgflineto{\pgfxy(3.6,0.9)}\pgflineto{\pgfxy(3.5,1)}\pgfclosepath\pgffillstroke

\pgfmoveto{\pgfxy(4,1.5)}\pgfpatharc{180}{270}{0.5cm}
\pgfstroke
\pgfmoveto{\pgfxy(4,1.5)}\pgflineto{\pgfxy(3.9,1.4)}\pgflineto{\pgfxy(4.1,1.4)}\pgflineto{\pgfxy(4,1.5)}\pgfclosepath\pgffillstroke

%Box (1,5)
\pgfmoveto{\pgfxy(4.5,3)}\pgfpatharc{90}{0}{0.5cm}
\pgfstroke
\pgfmoveto{\pgfxy(4.5,3)}\pgflineto{\pgfxy(4.6,3.1)}\pgflineto{\pgfxy(4.6,2.9)}\pgflineto{\pgfxy(4.5,3)}\pgfclosepath\pgffillstroke

\pgfmoveto{\pgfxy(5,3.5)}\pgfpatharc{180}{270}{0.5cm}
\pgfstroke
\pgfmoveto{\pgfxy(5,3.5)}\pgflineto{\pgfxy(4.9,3.4)}\pgflineto{\pgfxy(5.1,3.4)}\pgflineto{\pgfxy(5,3.5)}\pgfclosepath\pgffillstroke

%Box (2,5)

\pgfmoveto{\pgfxy(4.5,2)}\pgfpatharc{90}{0}{0.5cm}
\pgfstroke
\pgfmoveto{\pgfxy(4.5,2)}\pgflineto{\pgfxy(4.6,2.1)}\pgflineto{\pgfxy(4.6,1.9)}\pgflineto{\pgfxy(4.5,2)}\pgfclosepath\pgffillstroke

\pgfmoveto{\pgfxy(5,2.5)}\pgfpatharc{180}{270}{0.5cm}
\pgfstroke
\pgfmoveto{\pgfxy(5,2.5)}\pgflineto{\pgfxy(4.9,2.4)}\pgflineto{\pgfxy(5.1,2.4)}\pgflineto{\pgfxy(5,2.5)}\pgfclosepath\pgffillstroke

%Box (3,5)
\pgfmoveto{\pgfxy(4.5,1)}\pgfpatharc{90}{0}{0.5cm}
\pgfstroke
\pgfmoveto{\pgfxy(4.5,1)}\pgflineto{\pgfxy(4.6,1.1)}\pgflineto{\pgfxy(4.6,0.9)}\pgflineto{\pgfxy(4.5,1)}\pgfclosepath\pgffillstroke

\pgfmoveto{\pgfxy(5,1.5)}\pgfpatharc{180}{270}{0.5cm}
\pgfstroke
\pgfmoveto{\pgfxy(5,1.5)}\pgflineto{\pgfxy(4.9,1.4)}\pgflineto{\pgfxy(5.1,1.4)}\pgflineto{\pgfxy(5,1.5)}\pgfclosepath\pgffillstroke

%Label
\pgfputat{\pgfxy(0.2,1)}{\pgfnode{rectangle}{center}{\color{black}\footnotesize $1$}{}{\pgfusepath{}}}
\pgfputat{\pgfxy(0.2,2)}{\pgfnode{rectangle}{center}{\color{black}\footnotesize $2$}{}{\pgfusepath{}}}
\pgfputat{\pgfxy(0.2,3)}{\pgfnode{rectangle}{center}{\color{black}\footnotesize $3$}{}{\pgfusepath{}}}

\pgfputat{\pgfxy(5.8,1)}{\pgfnode{rectangle}{center}{\color{black}\footnotesize $1$}{}{\pgfusepath{}}}
\pgfputat{\pgfxy(5.8,2)}{\pgfnode{rectangle}{center}{\color{black}\footnotesize $2$}{}{\pgfusepath{}}}
\pgfputat{\pgfxy(5.8,3)}{\pgfnode{rectangle}{center}{\color{black}\footnotesize $3$}{}{\pgfusepath{}}}

\pgfputat{\pgfxy(1,0.2)}{\pgfnode{rectangle}{center}{\color{black}\footnotesize $4$}{}{\pgfusepath{}}}
\pgfputat{\pgfxy(2,0.2)}{\pgfnode{rectangle}{center}{\color{black}\footnotesize $5$}{}{\pgfusepath{}}}
\pgfputat{\pgfxy(3,0.2)}{\pgfnode{rectangle}{center}{\color{black}\footnotesize $6$}{}{\pgfusepath{}}}
\pgfputat{\pgfxy(4,0.2)}{\pgfnode{rectangle}{center}{\color{black}\footnotesize $7$}{}{\pgfusepath{}}}
\pgfputat{\pgfxy(5,0.2)}{\pgfnode{rectangle}{center}{\color{black}\footnotesize $8$}{}{\pgfusepath{}}}

\pgfputat{\pgfxy(1,3.8)}{\pgfnode{rectangle}{center}{\color{black}\footnotesize $4$}{}{\pgfusepath{}}}
\pgfputat{\pgfxy(2,3.8)}{\pgfnode{rectangle}{center}{\color{black}\footnotesize $5$}{}{\pgfusepath{}}}
\pgfputat{\pgfxy(3,3.8)}{\pgfnode{rectangle}{center}{\color{black}\footnotesize $6$}{}{\pgfusepath{}}}
\pgfputat{\pgfxy(4,3.8)}{\pgfnode{rectangle}{center}{\color{black}\footnotesize $7$}{}{\pgfusepath{}}}
\pgfputat{\pgfxy(5,3.8)}{\pgfnode{rectangle}{center}{\color{black}\footnotesize $8$}{}{\pgfusepath{}}}
\end{pgfpicture}
\end{tabular}
\caption{Labelled $3\times 5$ diagram (left) with pipe dream construction (right) associated to the diagram $\{(1,2),(1,4), (2,2),(3,1),(3,2),(3,3)\}$ in $\oh_{q}(M_{3,5}(\K))$.\label{FigToricPerm}}
\end{center}
\end{figure}

It was proved in \cite{BellCasteelsLaunois} that $\dim(\ker (M(w))$ is given by the number $r(w)$ of odd cycles in the disjoint cycle decomposition of its associated toric permutation $\tau_w$. Moreover, \cite[Theorem 2.6]{BellLaunoisRogers} shows that all invariant factors of $M(w)$ are powers of $2$. 

Putting all these together, we obtain the following result.
\begin{theorem}\label{thmPIdegCauchonIdeal}
Assume $q$ is a primitive $\ell^{\mbox{th}}$ root of unity with $\ell$ odd.	Let $w$ be a Cauchon diagram for $\oh_q(M_{m,n}(\K))$ and $P_w$ be the corresponding Cauchon ideal. Then
$$\PI(\oh_q(M_{m,n}(\K)) / P_w) = \sqrt{ \ell^{mn-r(w)}},$$
where $r(w)$ is the number of odd cycles in the disjoint cycle decomposition of the toric permutation associated to $w$.
\end{theorem}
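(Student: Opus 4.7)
The plan is to apply Theorem~\ref{thmPIdegAgain} to the case $A=\oh_q(M_{m,n}(\K))$, where $N=mn$, and then use the two cited structural results about the commutation matrix $M(w)$ to simplify the general formula drastically under the parity hypothesis on $\ell$.

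First, I would invoke Theorem~\ref{thmPIdegAgain} to write
\begin{equation*}
\PI(\oh_q(M_{m,n}(\K))/P_w) \;=\; \prod_{i=1}^{(mn-\dim\ker M(w))/2}\frac{\ell}{\gcd(h_i,\ell)},
\end{equation*}
where the $h_i$ are the invariant factors of $M(w)$, the skew-symmetric integer matrix obtained from $M$ by deleting the rows and columns indexed by $w$. So the task reduces to computing $\dim\ker M(w)$ and identifying the $\gcd(h_i,\ell)$ for each $i$.

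Next I would bring in the two external ingredients that the surrounding text has already flagged. From \cite{BellCasteelsLaunois}, the dimension of $\ker M(w)$ equals $r(w)$, the number of odd cycles in the disjoint cycle decomposition of the toric permutation $\tau_w$ associated to the Cauchon diagram $w$; this directly turns the upper limit of the product into $(mn-r(w))/2$. From \cite[Theorem 2.6]{BellLaunoisRogers}, every invariant factor $h_i$ of $M(w)$ is a power of $2$.

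The final simplification is where the hypothesis ``$\ell$ odd'' is used: since each $h_i$ is a power of $2$ and $\ell$ is odd, we have $\gcd(h_i,\ell)=1$ for all $i$, so every factor in the product collapses to $\ell$. Combining this with the count $(mn-r(w))/2$ of factors yields
\begin{equation*}
\PI(\oh_q(M_{m,n}(\K))/P_w) \;=\; \ell^{(mn-r(w))/2} \;=\; \sqrt{\ell^{mn-r(w)}},
\end{equation*}
as required. I do not anticipate any real obstacle here: the genuine technical work is entirely encapsulated in Theorem~\ref{thmPIdegAgain} (which already does the deleting-derivations/De~Concini--Procesi bookkeeping) and in the two cited theorems from \cite{BellCasteelsLaunois} and \cite{BellLaunoisRogers}; the present statement is just their clean combination once the odd-$\ell$ coprimality remark is made.
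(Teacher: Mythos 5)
Your proposal is correct and follows exactly the paper's own route: apply Theorem~\ref{thmPIdegAgain} with $N=mn$, then substitute $\dim\ker M(w)=r(w)$ from \cite{BellCasteelsLaunois} and the fact that every invariant factor of $M(w)$ is a power of $2$ from \cite[Theorem 2.6]{BellLaunoisRogers}, so that $\gcd(h_i,\ell)=1$ for odd $\ell$. The paper leaves this combination implicit ("Putting all these together\ldots") whereas you spell out the coprimality step, but there is no difference in substance.
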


Note that we did not need an explicit description of the set $\mathcal{W}'$ of Cauchon diagrams to obtain the above result. For completeness, we note that $\mathcal{W}'$ coincides with the set of $m\times n$ Cauchon-Le diagrams (see \cite[Theorem 4.37]{Thesis} for details).

\section{Irreducible Representations of Maximum Dimension}\label{sectionIrredRepDDA}
We take $1\neq q\in \K^*$ to be arbitrary, unless stated otherwise. Let $A$ be an algebra satisfying Hypothesis~\ref{hyp1} such that $\lambda_{i,j}=q^{m_{i,j}}$ for some skew-symmetric matrix $M=(m_{i,j})_{i,j}\in M_N(\Z)$. Fix $P\in \CSpec_w(A)$, for some Cauchon diagram $w \in \mathcal{W}'$. We make use of the deleting derivations algorithm to construct an irreducible representation of $A/P$ given a ``suitable" irreducible representation of $A'/\psi(P)$. When $A$ is a PI algebra (which holds if $q$ is a root of unity) and $\K$ is algebraically closed, it turns out that any irreducible representation of $A'/\psi(P)$ is ``suitable".

Recall the notation set in Section~\ref{sectionCompletelyPrimeQuotients}: $B:=A/P$ with generators $\bar{X}_i:=X_i+P$ and $B':=A'/\psi(P)$ with generators $t_i:=\bar{T}_i$. Set, again, $\Sigma:=\Sigma_2 \subseteq B'$ to be the (two-sided) Ore set generated by the $t_i$, for $i\in\llbracket 1, N \rrbracket \backslash w$ and then define, recursively for all $j\in \llbracket 2, N \rrbracket$, $\Sigma_{j+1}:=B^{j+1} \cap \Sigma_{j}$, with $\Gamma := \Sigma_{N+1}$ being a (two-sided) Ore set in $B$.

\begin{proposition}\label{propIrredReponBjBk}
Let $A$ be a $\K$-algebra satisfying Hypothesis \ref{hyp1},  $P\in \CSpec_w(A)$ for some $w\in \mathcal{W}'$ and suppose that $(\phi', V)$ is an irreducible representation of $B'$ where, for each $e\in \Sigma$, there exists some $\xi=\xi_e \in \K^*$ and $\ell=\ell_e \in \N_{>1}$ such that $\phi'(e)^{\ell}=\xi \Id_V$. Then
\begin{enumerate}
\item For any $b \in B$, there exists some $b'\in B'$ and $e\in \Sigma$ such that
\[ b = b' e^{-1} \in B'\Sigma^{-1}\]
and $\phi'$ induces an irreducible representation of $B$ on $V$ (of the same dimension), defined by the algebra homomorphism
\begin{align*}
\phi:  B  &{} ~ \longrightarrow ~\End_{\K}(V) \\
b &{}~ \longmapsto ~ \xi^{-1}\phi'(b') \phi'(e)^{\ell -1}.
\end{align*}
\item For each $g\in \Gamma$, there exists some $\xi' \in \K^*$ and $\ell' \in \N_{>1}$ such that $\phi(g)^{\ell'}=\xi' \Id_V$.
\end{enumerate}
\end{proposition}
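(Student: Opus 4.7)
The core idea is to extend $\phi'$ to a representation on the localization $B'\Sigma^{-1}$ and then restrict to $B$, exploiting the equality $B'\Sigma^{-1}=B\Gamma^{-1}$ provided by Theorem~\ref{thmDDAonQuotients}. For every $e\in\Sigma$ the hypothesis $\phi'(e)^{\ell_e}=\xi_e\,\Id_V$ with $\xi_e\in\K^*$ forces $\phi'(e)$ to be invertible in $\End_\K(V)$, with explicit inverse $\xi_e^{-1}\phi'(e)^{\ell_e-1}$. By the universal property of Ore localization, $\phi'$ thus extends uniquely to a $\K$-algebra homomorphism $\tilde\phi':B'\Sigma^{-1}\to\End_\K(V)$. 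Any $b\in B\subseteq B\Gamma^{-1}=B'\Sigma^{-1}$ admits a presentation $b=b'e^{-1}$ with $b'\in B'$ and $e\in\Sigma$, so setting $\phi:=\tilde\phi'|_B$ and substituting the explicit formula for $\phi'(e)^{-1}$ yields the displayed expression in~(1).

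For assertion~(2), the recursive definition $\Sigma_{j+1}=B^{(j+1)}\cap\Sigma_j$ gives $\Gamma=\Sigma_{N+1}\subseteq\Sigma_2=\Sigma$ as subsets of the common localization. Hence any $g\in\Gamma$ lies simultaneously in $B$ and in $\Sigma$; viewed in $\Sigma$ it satisfies $\phi'(g)^{\ell_g}=\xi_g\,\Id_V$ by hypothesis, while viewed in $B$ the extension forces $\tilde\phi'(g)=\phi(g)$. Therefore $\phi(g)^{\ell_g}=\xi_g\,\Id_V$, and~(2) follows with $\xi':=\xi_g$ and $\ell':=\ell_g$.

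The irreducibility claim in~(1) then uses~(2); this logical dependence is the only delicate point, since the two assertions must be proved in the opposite order from their statement. Let $W\subseteq V$ be a nonzero $\phi(B)$-stable subspace. For any $b'\in B'$, write $b'=c\gamma^{-1}$ in $B'\Sigma^{-1}=B\Gamma^{-1}$ with $c\in B$ and $\gamma\in\Gamma$. Then $\phi(c)W\subseteq W$ by assumption, while~(2) gives $\phi(\gamma)^{-1}=\xi'^{-1}\phi(\gamma)^{\ell'-1}$, a polynomial in $\phi(\gamma)$, which therefore also stabilises $W$. Consequently $\phi'(b')W=\tilde\phi'(c\gamma^{-1})W=\phi(c)\phi(\gamma)^{-1}W\subseteq W$, so $W$ is a nonzero $\phi'(B')$-submodule of $V$. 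Irreducibility of $\phi'$ forces $W=V$, whence $\phi$ is irreducible, and the equality of dimensions is automatic since $\phi$ and $\phi'$ act on the same space $V$.
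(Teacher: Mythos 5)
Your proposal is correct and follows essentially the same route as the paper's proof: extend $\phi'$ to the Ore localization $B'\Sigma^{-1}$ using the invertibility of $\phi'(e)$, identify $B'\Sigma^{-1}$ with $B\Gamma^{-1}$ via Theorem~\ref{thmDDAonQuotients}, restrict to $B$ to get $\phi$, prove assertion~(2) via the inclusion $\Gamma\subseteq\Sigma$, and then use~(2) to establish irreducibility of $\phi$. The only cosmetic difference is in the final step: the paper explicitly exhibits $\phi'(b')=\phi(\xi'^{-1}bg^{\ell'-1})$ to conclude $\phi'(B')\subseteq\phi(B)$, whereas you argue directly that every $\phi(B)$-stable subspace is $\phi'(B')$-stable; these are logically equivalent formulations of the same argument.
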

\begin{proof}
Let $(\phi', V)$ be an irreducible representation of $B'$ satisfying the conditions of the proposition. The condition on $\phi'$ implies that $\phi'(e)^{-1}= \xi_e^{-1}\phi'(e)^{\ell_e-1}$ and induces a representation of $B'\Sigma^{-1}$,
\begin{align*}
\hat{\phi}: ~B' \Sigma^{-1} &{} ~ \longrightarrow ~\End_{\K}(V) \\
b' &{}~ \longmapsto ~\phi'(b') \\
e^{-1} &{} ~\longmapsto ~\xi_e^{-1}\phi'(e)^{\ell_e-1},
\end{align*}
from which we observe that $\hat{\phi}(e)^{-1}=\xi_e^{-1} \hat{\phi}(e)^{\ell_e -1}$, for all $e\in \Sigma$. The inclusion $B'\subseteq B'\Sigma^{-1}$ ensures that $(\hat{\phi}, V)$ is irreducible.

From Proposition \ref{propconfusedalgs3} and Theorem \ref{thmDDAonQuotients} we see that $B'\Sigma^{-1}=B\Gamma^{-1}$, hence each element in $B\Gamma^{-1}$ can be written in terms of elements in $B' \Sigma^{-1}$, and vice versa. This allows us to view $\hat{\phi}$ as an algebra homomorphism $\hat{\phi}: B\Gamma^{-1} \rightarrow  \End_{\K}(V)$, hence $(\hat{\phi}, V)$ defines an irreducible representation of $B\Gamma^{-1}$. 

Every element $b \in B$ may be written as an element $b \cdot 1^{-1} \in B \Gamma^{-1}$, which we write simply as $b\in B\Gamma^{-1}$. By the equality of localisations, $b$ may also be written as an element in $B' \Sigma^{-1}$, that is,
 $b = b' e^{-1} \in B' \Sigma^{-1}$. Restricting $\hat{\phi}$ to $B$ gives the following algebra homomorphism:
\begin{align*}
\phi: B &{}\longrightarrow \End_{\K}(V) \\
b &{} \longmapsto \hat{\phi}\left(b' e^{-1}\right) = \xi^{-1} \phi'(b') \phi'(e)^{\ell-1}.
\end{align*}
This defines a (not necessarily irreducible) representation of $B$ on $V$. To show that this representation is irreducible we note that $\Gamma \subseteq \Sigma$. Therefore, for all $g \in \Gamma$, there exists some $\xi' \in \K^*$ and $\ell' \in \N$ such that $\hat{\phi}(g)^{\ell'}= \xi' \Id_V$, hence
\[\phi(g)^{\ell'} = \hat{\phi}(g)^{\ell'} = \xi' \Id_V. \]
Using the identity $B \Gamma^{-1} = B' \Sigma^{-1}$ we may write any element $b'\in B'$ as $b' = b g^{-1} \in B \Gamma^{-1}$. Thus
\[\phi'(b')=\hat{\phi}(b') = \hat{\phi}\left(bg^{-1}\right) =  \xi'^{-1}\hat{\phi}(b) \hat{\phi}(g)^{\ell'-1}= \phi(\xi'^{-1} b g^{\ell'-1}), \]
which shows that $\phi'(B') \subseteq \phi(B)$. Therefore, since $(\phi', V)$ is irreducible, so too is $(\phi, V)$.
\end{proof}

\begin{remark}
If we take $P=\{0\}$ in Proposition \ref{propIrredReponBjBk} then we get $B'=A'$ and $B=A$ and the statement can be stated analogously given the results and notation of Proposition \ref{propconfusedalgs} and Theorem \ref{thmPIdegs}. This means we can construct an irreducible representation on $A$ provided we have an irreducible representation on $(\phi', V)$ satisfying the condition $\phi'(e)^{\ell_e} = \xi_e \Id_V$ for all elements $e$ of the Ore set in $A'$ generated by $\{T_i\}_{i=1}^N$ and for some $\ell \in \N_{>1}$.
\end{remark}

Restricting the algebra $A$ to the root of unity case and taking $\K$ to be algebraically closed (needed for Schur's lemma used in the proof), we see that any irreducible representation on $B'$ will satisfy the conditions of Proposition \ref{propIrredReponBjBk}.

\begin{theorem}\label{corIrredRepOnB}
Take $\K$ to be an algebraically closed field. Let $A$ be a $\K$-algebra satisfying Hypothesis~\ref{hyp1} and suppose that the $\lambda_{i,j}$ in H\ref{hyp1}.\ref{hyp1.2} are of the form $\lambda_{i,j}=q^{m_{i,j}}$ for some skew-symmetric matrix $M=(m_{i,j})_{i,j}\in M_N(\Z)$, where $q\in \K^*$ is a primitive $\ell^{\text{th}}$ \textbf{root of unity} with $\ell \in \N_{>1}$. Then:
\begin{enumerate}
\item Any irreducible representation $(\phi', V)$ of $B'$ satisfies the conditions of Proposition \ref{propIrredReponBjBk}.
\item Any irreducible representation $(\phi', V)$ of $B'$ induces an irreducible representation $(\phi, V)$ of $B$ of the same dimension.
\end{enumerate} 
\end{theorem}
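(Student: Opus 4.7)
The approach is to deduce Part~(2) immediately from Part~(1) via Proposition~\ref{propIrredReponBjBk}(1): that proposition already constructs an induced irreducible representation of $B$ of the same dimension as $V$ as soon as its hypothesis is verified, so the entire content is to show that, for every $e \in \Sigma$, there exist $\xi_e \in \K^\ast$ and $\ell_e \in \N_{>1}$ with $\phi'(e)^{\ell_e} = \xi_e \Id_V$, for any irreducible $(\phi', V)$ of $B'$.

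The first key step is a centrality calculation. Since $q^\ell = 1$ and $\lambda_{i,j} = q^{m_{i,j}}$, one has $\lambda_{i,j}^\ell = 1$ for all $i,j$. In $B'$ the generators satisfy $t_i t_j = \lambda_{i,j} t_j t_i$, hence $t_i^\ell t_j = \lambda_{i,j}^\ell t_j t_i^\ell = t_j t_i^\ell$, placing each $t_i^\ell$ in $Z(B')$. Any $e \in \Sigma$ can be brought to normal form $\mu \prod_{i \notin w} t_i^{a_i}$ with $\mu \in \K^\ast$, and the same commutation count gives $e^\ell = \mu' \prod_{i \notin w} t_i^{\ell a_i}$ for some $\mu' \in \K^\ast$, which is therefore central. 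I would then invoke Kaplansky's theorem: by Theorem~\ref{thmPIdegs}(iii) $A$ is PI, so $B'$ is PI, and $\K$ being algebraically closed forces $V$ to be finite-dimensional. Schur's lemma then yields a central character $\chi : Z(B') \to \K$ with $\phi'(z) = \chi(z)\Id_V$ for every central $z$; applied to $z = e^\ell$ this gives $\phi'(e)^\ell = \chi(e^\ell)\Id_V$, so the natural choice is $\ell_e := \ell \geq 2$ and $\xi_e := \chi(e^\ell)$.

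The hard part, and what I expect to be the main obstacle, is guaranteeing $\xi_e \neq 0$. Since $\xi_e$ is the product of $\mu'$ with powers of the scalars $\chi(t_i^\ell)$, it suffices to handle $e = t_i$ for each $i \notin w$. Here one uses that $t_i \neq 0$ in the domain $B' = A'/Q$ (because $Q \in \mathrm{C.Spec}_w(A')$ forces $T_i \notin Q$ whenever $i \notin w$) and that $t_i$ is normal in $B'$ by the commutation relations. Normality makes $\ker \phi'(t_i) \subseteq V$ a $B'$-submodule, which irreducibility forces to be $0$ or $V$. Ruling out $\phi'(t_i) = 0$ is the delicate step: one has to exploit that $B'$ is a prime affine PI algebra over an algebraically closed field, whose primitive spectrum is controlled by the Dixmier--Moeglin / Azumaya picture, in order to rule out that an irreducible representation annihilates the nonzero normal element $t_i$. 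Once $\phi'(t_i)$ is injective on the finite-dimensional $V$ it is automatically invertible, so $\chi(t_i^\ell) \in \K^\ast$, hence $\xi_e \in \K^\ast$ for every $e \in \Sigma$. This establishes Part~(1), and Part~(2) follows at once from Proposition~\ref{propIrredReponBjBk}.
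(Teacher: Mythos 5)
Your centrality-plus-Schur argument is exactly the one the paper gives: $t_i^\ell$ commutes with all generators of $B'$, hence lies in $Z(B')$, and Schur's lemma (after Kaplansky's theorem forces $\dim V < \infty$) yields $\phi'(t_i)^\ell = \xi_i \Id_V$. You are also right that everything hinges on $\xi_i \in \K^*$ rather than merely $\xi_i \in \K$, and you correctly observe that Part~(2) is a formal consequence of Part~(1) via Proposition~\ref{propIrredReponBjBk}. Your reduction of the general $e \in \Sigma$ to the generators $t_i$ is also fine.

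Where your argument goes wrong is the ``delicate step'' you flag: an appeal to the Dixmier--Moeglin/Azumaya picture does \emph{not} rule out $\phi'(t_i)=0$ for an arbitrary irreducible representation. Concretely, take $A = A' = B' = \K_q[t_1,t_2]$ (so $P=0$, $w=\emptyset$, $\Sigma$ generated by $t_1,t_2$), with $q$ a primitive $\ell$-th root of unity, $\ell \ge 2$. The one-dimensional representation $t_1 \mapsto 1$, $t_2 \mapsto 0$ is a perfectly good irreducible representation of $B'$, yet it annihilates the nonzero normal element $t_2$, so $\phi'(t_2)^\ell = 0$ is not of the form $\xi \Id_V$ with $\xi \in \K^*$. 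Your observation that $\ker\phi'(t_i)$ is a submodule by normality only tells you that $\phi'(t_i)$ is either invertible or zero; it does not exclude the zero case. The Azumaya-locus argument does exclude it, but only for irreducible representations of \emph{maximal} dimension, i.e.\ those whose central character lies in the Azumaya locus (which, for a quotient of a quantum affine space, is contained in the complement of the hypersurfaces $t_i^\ell = 0$); it says nothing about lower-dimensional irreducibles.

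You should also note that the paper's own proof has the same gap: it applies Schur's lemma and simply writes ``$\phi'(t_i)^\ell = \xi_i \Id_V$ for some $\xi_i \in \K^*$'' without explaining why $\xi_i \ne 0$. As stated, the theorem is not correct for arbitrary irreducible representations of $B'$; it is correct if one restricts to irreducible representations of maximal dimension (equal to $\PI(B')$), which is in fact how it is used later (Remark~\ref{remIrrepCauchonIdeal} explicitly feeds in a maximal-dimensional representation produced by \cite[Proposition~3.4]{BellLaunoisRogers}, and the section title advertises ``Irreducible Representations of Maximum Dimension''). With that restriction, the Azumaya argument you sketch can be made to close the gap, but as written your proposal does not justify the crucial nonvanishing.
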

\begin{proof} \
\begin{enumerate}
\item $A'$ is a prime affine PI algebra (Theorem \ref{thmPIdegs}(iii)), as is $B'$, hence any irreducible representation $(\phi', V)$ of $B'$ is finite dimensional. Suppose $P\in \CSpec_w(A)$, for some $w \in \mathcal{W'}$. Then the quotient algebra, $B'=A'/\psi(P)$, is generated by all $t_i$ such that $i\in \llbracket 1, N \rrbracket \backslash w$, where $t_i:=T_i + \psi(P)$. Hence $t_it_j=q^{m_{i,j}}t_jt_i$, for all $i, j \in  \llbracket 1, N \rrbracket \backslash w$. We deduce that $t_i^{\ell}\in Z(B')$ for all $i\in  \llbracket 1, N \rrbracket \backslash w$, thus, by Schur's Lemma, $\phi'(t_i)^{\ell}=\xi_i \Id_V$ for some $\xi_i \in \K^*$. Since $\Sigma$ is defined to be the multiplicatively closed set generated by $\{t_i \mid i\in  \llbracket 1, N \rrbracket \backslash w\}$, then the conditions of Proposition \ref{propIrredReponBjBk} are satisfied.
\item Follows from Proposition \ref{propIrredReponBjBk}.
\end{enumerate}
\end{proof}

\begin{remark}\label{remIrrepCauchonIdeal}
When $P$ is a Cauchon ideal ($P=P_w$ for some $w \in \mathcal{W}'$) then, by definition, $B'$ becomes a quantum affine space. If $A$ then satisfies the conditions of Theorem \ref{corIrredRepOnB} (i.e.\ we take a quantum nilpotent algebra at a root of unity, $q$) then
 $$ \PI(B') = \prod_{i=1}^{\frac{N-\dim(\ker(M(w)))}{2}} \frac{\ell}{\gcd(h_i, \ell) },$$
 where the $h_i$ are the invariant factors of the matrix $M(w)$ deduced from $M$ by removing rows and columns indexed by $w$
(see Theorem \ref{thmPIdegAgain}). We can construct an irreducible representation of $B'$ of maximal dimension using \cite[Proposition 3.4]{BellLaunoisRogers}. This will, by Theorem \ref{corIrredRepOnB}, induce an irreducible representation of $B$ via the method of Proposition \ref{propIrredReponBjBk}. These results therefore give a method to construct an explicit maximal dimensional irreducible representation of $B$ for a given $w\in\mathcal W'$.
\end{remark}

Proposition \ref{propIrredReponBjBk} and Theorem \ref{corIrredRepOnB} have been stated for the two extreme algebras in the DDA. In practice, we would often deal with one step of the DDA at a time, that is we would deduce an irreducible representation of $A^{(j+1)} /P_{j+1} $ from an irreducible representation of  $A^{(j)} /P_{j} $. 

\subsection{Example: A maximal irreducible representation of $U_q^+(\mathfrak{so}_5)/\langle z' \rangle$}\label{ExampleUq}
To illustrate Remark \ref{remIrrepCauchonIdeal} we will construct an irreducible representation of a quotient of $A:=U_q^+(\mathfrak{so}_5)$ by the ideal $\langle z' \rangle$ (defined later in this section). This example will require applications of the deleting derivations algorithm on both $A$ (Theorem \ref{mainthm}) and $A/\langle z' \rangle$ (Theorem \ref{thmDDAonQuotients}), computing the image of $\langle z' \rangle$ by the canonical embedding (Section \ref{sectionProperties}), and the calculation of the PI degree of $A/ \langle z' \rangle$ (Theorem \ref{thmPIdegAgain}). The full details of these computations are omitted as they are straightforward but lengthy, however they can be found in \cite[Sections 5.3.1 and 7.3.2]{Thesis}.

For this example, we take $q$ to be a primitive $\ell^{\text{th}}$ root of unity with $\ell \notin \{2, 4\}$.

\subsubsection{Defining the algebra}
$U_q^+(\mathfrak{so}_5)$ is the $\C$-algebra generated by two indeterminates $E_1, \, E_2$ subject to the the following relations:
\begin{align*}
E_1^3E_2-(q^2+1+q^{-2})E_1^2 E_2 E_1 + (q^2+1+q^{-2})E_1 E_2 E_1^2-E_2 E_1^3 &{}= 0 \\
E_2^2 E_1 - (q^2+q^{-2})E_2 E_1 E_2 + E_1 E_2^2 &{}= 0.
\end{align*}
There is a PBW basis of $U_q^+(\mathfrak{so}_5)$ formed by monomials $E_1^{k_1}E_4^{k_4}E_3^{k_3}E_2^{k_2}$, where $k_1, k_2, k_3, k_4$ are nonnegative integers and $E_3,\, E_4$ are certain root vectors. This result can be found, for example, in \cite[Section 2.4]{Launois-AutGrpsofEnvelopingAlgs}. The same paper also expresses this algebra as an iterated Ore extension over $\C$ generated by these four indeterminates in the order $E_1, \, E_4,\, E_3,\, E_2$. For easier application of the deleting derivations algorithm, we relabel the indeterminates:
\[ X_1:= E_1, \quad X_2:=E_4, \quad X_3:=E_3, \quad X_4:=E_2. \]
The relations between $X_1,\, X_2,\, X_3,\, X_4$ in  \cite{Launois-AutGrpsofEnvelopingAlgs}  then become
\begin{align*}
X_2X_1&=q^{-2}X_1X_2, \\
X_3X_1&{}=X_1X_3-(q+q^{-1})X_2, \quad &&X_3X_2=q^{-2}X_2X_3, \\
X_4X_1&{}=q^2X_1X_4-q^2X_3, \quad &&X_4X_2=X_2X_4-\frac{q^2-1}{q+q^{-1}}X_3^2, \quad ~~ X_4X_3=q^{-2}X_3X_4.
\end{align*}
This allows us to present $A:=U_q^+(\mathfrak{so}_5)$ as the following iterated Ore extension:
\[U_q^+(\mathfrak{so}_5) = \C[X_1][X_2; \sigma_2][X_3;\sigma_3, \delta_3][X_4;\sigma_4,\delta_4] \]
where, using the notation $A_j:=\C\langle X_1, \ldots, X_j\rangle \subseteq A$, the automorphisms $\sigma_i$ and skew-derivations $\delta_i$ are defined on the generators as:
\begin{align}
\sigma_2: A_1 \longrightarrow A_1; \quad &X_1 \longmapsto q^{-2}X_1, \qquad & &  \nonumber \\
\sigma_3: A_2 \longrightarrow A_2;\quad  &X_1 \longmapsto  X_1 \qquad & \delta_3: A_2 \longrightarrow A_2; \quad &X_1 \longmapsto -(q+q^{-1})X_2 \nonumber \\
&X_2  \longmapsto q^{-2}X_2, \qquad &   &X_2 \longmapsto 0, \nonumber \\
\sigma_4:  A_3 \longrightarrow A_3; \quad &X_1 \longmapsto q^2X_1 \qquad & \delta_4: A_3 \longrightarrow A_3;\quad  &X_1 \longmapsto -q^2 X_3  \nonumber \\
&X_2 \longmapsto  X_2 \qquad & &X_2 \longmapsto -\frac{q^2-1}{q+q^{-1}} X_3^2 \nonumber \\
&X_3 \longmapsto  q^{-2} X_3, \qquad & &X_3 \longmapsto 0.\label{eqnSigmas}
\end{align}

\subsubsection{Verifying that $U_q^+(\mathfrak{s0}_5)$ satisfies Hypothesis \ref{hyp1}}
Routine computations on these maps using \cite[Theorem 2.8 and Lemma 5.3]{Haynal} show that $A$ satisfies all properties of Hypothesis \ref{hyp1}. In particular:
\begin{enumerate}[(H\ref{hyp1}.1)]
\item The $\lambda_{i,j}$'s are as in (\ref{eqnSigmas}).
\item $\lambda_{i,j}=q^{m_{i,j}}$, with the $m_{i,j}$ defining the skew-symmetric matrix
\[M:=\left(\begin{smallmatrix} 0 & 2 & 0 & -2 \\ -2 & 0 & 2 & 0 \\ 0 & -2 & 0 & 2 \\ 2 & 0 & -2 & 0 \end{smallmatrix} \right) \in M_4(\Z).\]
\item $(\sigma_3, \delta_3)$ is $q^2$-skew and $(\sigma_4, \delta_4)$ is $q^4$-skew. So we set $q_3=q^2$ and $q_4=q^4$.
\item The higher $q_j$-skew $\sigma_j$-derivations are defined to be:
\begin{align}\label{eqnd_(j,n)}
d_{j,n}(X_l) &{}= \begin{cases} X_l & n=0; \\
								 \delta_j(X_l) &  n=1; \\
								 0 & n>1,
					\end{cases}
\end{align}
for $j\in \{3,4\}$ and $l \in \llbracket 1, j-1 \rrbracket$.
\end{enumerate} 
Since there are two nonzero derivations we apply Theorem \ref{mainthm} twice to obtain $A'=\C_{q^M}[T_1, T_2, T_3, T_4]$, where
\begin{align}\label{eqnX_j^(3)asX_j}
T_4&{}:=X_4, &T_3&{}:=X_3,  & T_2&{}:=X_2-\frac{q^4}{(q^2+1)(q+q^{-1})} X_3^2 X_4^{-1},  & T_1&{}:= X_1-\frac{q^2(q+q^{-1})}{q^2-1}X_2 X_3^{-1}.
\end{align}

\subsubsection{Finding the PI degree of  $U_q^+(\mathfrak{s0}_5)/\langle z' \rangle$}
Consider the ideal $\langle z' \rangle \lhd A$, generated by the central element (see \cite[Section 2.4]{Launois-AutGrpsofEnvelopingAlgs})
\[ z' := -(q^2-q^{-2})(q+q^{-1})X_2X_4 + q^2(q^2-1)X_3^2. \]
Using localisation theory, Proposition \ref{propconfusedalgs}(\ref{confusedalg3}), and repeated application of Proposition \ref{lemBiHom} and Lemma \ref{lemBiIncHomP0}, one can show that $\psi^{-1}(\langle T_2 \rangle) = \langle z' \rangle$. In particular this proves that $\langle z' \rangle \in \CSpec_{\{2\}}(A)$ and therefore $\{2\} \in \mathcal{W}'$. In fact, since $A'/\langle T_2 \rangle = \C_{q^{M'}}[t_1, t_3, t_4]$, where $t_i := T_i + \langle T_2 \rangle$ for all $i\in \{1, 3, 4\}$, then $\langle z' \rangle$ is a Cauchon ideal.

By Theorem \ref{thmDDAonQuotients}, 
\[\PI(U_q^+(\mathfrak{s0}_5)/\langle z' \rangle) = \PI(A'/\langle T_2 \rangle) = \PI(\C_{q^{M'}}[t_1, t_3, t_4])\]
where $M'=\left(\begin{smallmatrix} 0 & 0 & -2 \\ 0 & 0 & 2 \\ 2 & -2 & 0 \end{smallmatrix} \right)$ is obtained from $M$ by deleting the second row and second column. It is easily verified that the skew normal form of $M'$ is $S=\left(\begin{smallmatrix} 0 & 2 & 0 \\ -2 & 0 & 0 \\ 0 & 0 & 0 \end{smallmatrix}\right)$, hence $M'$ has a kernel of dimension $1$ and one pair of invariant factors: $h_1 = 2$. Applying Theorem \ref{thmPIdegAgain} with $P_{\{2\}}=\langle z' \rangle$ gives:
\[ \text{PI-deg}(A/\langle z' \rangle) = \prod_{i=1}^{\frac{3-1}{2}} \frac{\ell}{\gcd(h_i, \ell)} = \frac{\ell}{\gcd(2, \ell)}=\begin{cases} \ell & \ell \text{ is odd};\\ \ell/2 & \ell>4 \text{ is even}. \end{cases}\]

\subsubsection{Constructing an irreducible representation of $U_q^+(\mathfrak{so}_5)/\langle z' \rangle$ of maximum dimension}\label{sectionEqUq}
Recall that $B':=\C_{q^{M'}}[t_1, t_3, t_4], ~ B:=U_q^+(\mathfrak{so}_5)/\langle z' \rangle$ and let $\Sigma \subseteq B'$ be the multiplicatively closed set generated by $\{t_1, t_3, t_4\}$. The quantum affine space associated to $S$ is $D:=\C_{q^S}[x_1, y_1, z_1]$. By \cite[Proposition 3.4(ii)]{BellLaunoisRogers} there is an $\ell$-dimensional $\C$-vector space $V$, $\lambda, \xi\in\C^*$ and an algebra homomorphism $\varphi: D \rightarrow \End_{\C}(V)$ whose image on the generators of $D$, upon fixing a basis $v_1, \ldots, v_{\ell}\in V$, may be presented as matrices in the following way:
\[\varphi(x_1) = \left(\begin{smallmatrix} \lambda &  & &  & \\
												 & \lambda q^2 & & & \\
												  &  & \lambda q^4 &  & \\
												  & & & \ddots &  \\
												  &  & &  & \lambda q^{(\ell-1)2}
							\end{smallmatrix} \right), \qquad  \varphi(y_1) = \left(\begin{smallmatrix} 0 & 0 &  \ldots & 0 & 1 \\
											1 & 0 &  \ldots & 0 & 0 \\
											0& 1 &  \ldots & 0 & 0 \\
											\vdots &  & \ddots  &\vdots & \vdots \\
											0 & \ldots & \ldots  & 1 & 0 \end{smallmatrix}\right), \qquad \varphi(z_1)= \xi \Id_V. \]
The pair $(\varphi, V)$ defines an irreducible representation of $D$.

To define an irreducible representation of $\C_{q^{M'}}[t_1, t_3, t_4]$ we apply \cite[Proposition 3.4(iii)]{BellLaunoisRogers} using $E^{-1}=\left(\begin{smallmatrix} 1 & 0 & 1 \\ -1 & 0 & 0 \\ 0 & -1 & 0  \end{smallmatrix}\right)$, where $E\in M_3(\Z)$ is the invertible matrix satisfying $EM'E^T=S$. The resulting algebra homomorphism $\phi': \C_{q^{M'}}[t_1, t_3, t_4] \rightarrow \End_{\C}(V)$, is defined on $t_1, \, t_3, \, t_4$ as
\begin{align}\label{eqnPhiont}
\phi'(t_1) &{}= \varphi(x_1^1 y_1^0 z_1^1) = \varphi(x_1) \varphi(z_1) = \xi \varphi(x_1), \nonumber \\
\phi'(t_3) &{}= \varphi(x_1^{-1} y_1^0 z_1^0) = \varphi(x_1)^{-1} = \lambda^{-\ell} \varphi(x_1^{\ell-1}), \nonumber \\
\phi'(t_4) &{}= \varphi(x_1^0 y_1^{-1} z_1^0) = \varphi(y_1)^{-1} = \varphi(y_1^{\ell-1}),
\end{align} 
and it defines an irreducible representation of $\C_{q^{M'}}[t_1, t_3, t_4]$ on $V$. Using the definition of $\varphi$, we see that 
\begin{align*}
\phi'(t_1)^{\ell} &{}= \xi^{\ell} \varphi(x_1)^{\ell} = (\xi\lambda)^{\ell} \Id_V, \nonumber\\
\phi'(t_3)^{\ell} &{} = \lambda^{-\ell^2} \varphi(x_1)^{\ell^2-\ell}= \lambda^{-\ell^2} \lambda^{\ell^2-\ell} \Id_V = \lambda^{-\ell} \Id_V, \nonumber\\
\phi'(t_4)^{\ell} &{}= \varphi(y_1)^{\ell^2-\ell}=\Id_V,
\end{align*}
hence the conditions of Proposition \ref{propIrredReponBjBk} are satisfied. Applying this proposition allows us to define an irreducible representation of $B$ once we know how to write the generators $\bar{X}_1, \ldots, \bar{X}_4 \in B$ in terms of the generators $t_1, t_3, t_4 \in B'$. Using (\ref{eqnX_j^(3)asX_j}) we see that
\begin{align*}
t_4:=\bar{X}_4, \quad t_3=\bar{X}_3, \quad t_2=\bar{X}_2-\frac{q^4}{(q^2+1)(q+q^{-1})} \bar{X}_3^2 \bar{X}_4^{-1}, \quad t_1= \bar{X}_1 - \frac{q^2(q+q^{-1})}{q^2-1} \bar{X}_2 \bar{X}_3^{-1}.
\end{align*}
Rearranging these identities, and noting that $t_2=0$, allows us to write
\[ \bar{X}_4=t_4, \quad \bar{X}_3=t_3, \quad \bar{X}_2=\frac{q^4}{(q^2+1)(q+q^{-1})} t_3^2 t_4^{-1}, \quad \bar{X}_1=t_1+\frac{q^4}{q^4-1} t_3 t_4^{-1}.\]
Applying Proposition \ref{propIrredReponBjBk} and using (\ref{eqnPhiont}), we deduce that there is an algebra homomorphism $\phi: B \rightarrow \End_{\C}(V)$ defined on generators as
\begin{align*}
\phi(\bar{X}_4)&{}=\phi'(t_4) = \varphi(y_1)^{-1}, & \phi(\bar{X}_3)&{}=\phi'(t_3) = \varphi(x_1)^{-1}, \\
\phi(\bar{X}_2)&{}=\phi'\left(\frac{q^4}{(q^2+1)(q+q^{-1})} t_3^2 t_4^{-1}\right) & \phi(\bar{X}_1)&{}=\phi'\left(t_1+\frac{q^4}{q^4-1} t_3 t_4^{-1}\right) \\
&{}= \frac{q^4}{(q^2+1)(q+q^{-1})} \varphi(x_1)^{-2} \varphi(y_1), & &{}= \xi \varphi(x_1)+ \frac{q^4}{q^4-1} \varphi(x_1)^{-1} \varphi(y_1).\\
\end{align*}
which defines an irreducible representation $(\phi, V)$ of $B$. Substituting in the matrices for $\varphi(x_1)$ and $\varphi(y_1)$ and taking, for example, $\ell=5$ gives the following explicit form of this irreducible representation:
\begin{align*}
\phi(\bar{X}_4)&{}= \left( \begin {matrix} 0&1&0&0&0\\ \noalign{\medskip}0&0&1&0&0
\\ \noalign{\medskip}0&0&0&1&0\\ \noalign{\medskip}0&0&0&0&1
\\ \noalign{\medskip}1&0&0&0&0\end {matrix} \right), \quad & \phi(\bar{X}_3)&{}=  \left( \begin {matrix} {\lambda}^{-1}&0&0&0&0
\\ \noalign{\medskip}0&\lambda^{-1} q^3&0&0&0
\\ \noalign{\medskip}0&0&\lambda^{-1}q&0&0
\\ \noalign{\medskip}0&0&0&\lambda^{-1} q^4&0
\\ \noalign{\medskip}0&0&0&0&\lambda^{-1} q^2\end {matrix}
 \right),
\end{align*}
\begin{align*}
   \phi(\bar{X}_2) &{}= \left( \begin {matrix} \noalign{\medskip}0&0&0&0&\frac{1}{(q^2+1)^2 \lambda^2}\\ \noalign{\medskip}\frac{q}{(q^2+1)^2 \lambda^2}&0&0&0&0
\\ \noalign{\medskip}0&\frac{q^2}{(q^2+1)^2 \lambda^2}&0&0&0\\ \noalign{\medskip}0&0&\frac{q^3}{(q^2+1)^2 \lambda^2}&0&0  \\ \noalign{\medskip}0&0&0&\frac{q^4}{(q^2+1)^2 \lambda^2}&0\end{matrix} \right), \\
\phi(\bar{X}_1)&{}=\left( \begin {matrix}\xi \lambda &0&0&0
& \frac{q^4}{(q^{4}-1)\lambda} \\ \noalign{\medskip}\frac{q^2}{(q^{4}-1)\lambda}&\xi \lambda q^2&0&0&0
\\ \noalign{\medskip}0&\frac{1}{(q^{4}-1)\lambda}&\xi \lambda q^4&0&0
\\ \noalign{\medskip}0&0&\frac{q^3}{(q^{4}-1)\lambda}&\xi \lambda q &0 \\ \noalign{\medskip}0&0&0&\frac{q}{(q^{4}-1)\lambda}&\xi \lambda q^3
\end {matrix} \right).
\end{align*}

\bibliographystyle{alpha}

\begin{thebibliography}{CPWZ16}

  \bibitem[BCL12]{BellCasteelsLaunois}
J.~P.~Bell, K.~Casteels, and S.~Launois.
\newblock Enumeration of {$\mathcal{H}$}-strata in quantum matrices with
  respect to dimension.
\newblock {\em Journal of Combinatorial Theory. Series A}, 119(1):83--98, 2012.
	
\bibitem[BLR22]{BellLaunoisRogers}
J.~Bell, S.~Launois, and A.~Rogers.
\newblock {PI} degree and irreducible representations of quantum determinantal
  rings and their associated quantum schubert varieties.
\newblock arXiv.2212.03799v2
																						
\bibitem[BG02]{BrownGoodearl}
K.~A. Brown and K.~R. Goodearl.
\newblock {\em Lectures on algebraic quantum groups}.
\newblock Advanced Courses in Mathematics. CRM Barcelona. Birkh\"auser Verlag,
  Basel, 2002.
			 

\bibitem[BY17]{BrownYakimov}
K.~A. {Brown} and M.~T. {Yakimov}.
\newblock {Azumaya loci and discriminant ideals of PI algebras}.
\newblock {\em Advances in Mathematics}, 340: 1219--1255, 2018.

\bibitem[Cas14]{Casteels}
K.~Casteels.
\newblock Quantum matrices by paths.
\newblock {\em Algebra \& Number Theory}, 8(8):1857--1912, 2014.

\bibitem[Cau03a]{Cauchon}
G.~Cauchon.
\newblock Effacement des d\'erivations et spectres premiers des alg\`ebres
  quantiques.
\newblock {\em Journal of Algebra}, 260(2):476--518, 2003.
		
\bibitem[Cau03b]{Cauchon-SpectrePremiers}
G.~Cauchon.
\newblock Spectre premier de {$O_q(M_n(k))$}: image canonique et s\'{e}paration
  normale.
\newblock {\em Journal of Algebra}, 260(2):519--569, 2003.
																				  

\bibitem[CPWZ15]{CekenPalmieriWangZhang2}
S.~Ceken, J.~H. Palmieri, Y.-H. Wang, and J.~J. Zhang.
\newblock The discriminant controls automorphism groups of noncommutative
  algebras.
\newblock {\em Advances in Mathematics}, 269:551--584, 2015.
																							  
\bibitem[CPWZ16]{CekenPalmieriWangZhang1}
S.~Ceken, J.~H. Palmieri, Y.-H. Wang, and J.~J. Zhang.
\newblock The discriminant criterion and automorphism groups of quantized
  algebras.
\newblock {\em Advances in Mathematics}, 286:754--801, 2016.
						
\bibitem[CP93]{DeConciniProcesi}
C.~De Concini and C.~Procesi.
\newblock Quantum groups.
\newblock In {\em {$D$}-modules, representation theory, and quantum groups
  ({V}enice, 1992)}, volume 1565 of {\em Lecture Notes in Math.}, pages
  31--140. Springer, Berlin, 1993.
  
\bibitem[GY14]{GeigerYakimov}
J.~Geiger and M.~Yakimov.
\newblock Quantum Schubert cells via representation theory and ring theory.
\newblock {\em Michigan Mathematical Journal}, 63: 125--157, 2014. 
  
\bibitem[GLL19]{GoodearlLaunoisLenagan-Tauvel}
K.~R. {Goodearl}, S.~{Launois}, and T.~H.~{Lenagan}.
\newblock {Tauvel's height formula for quantum nilpotent algebras}.
\newblock {\em  Communications in Algebra}, 47(10): 4194--4209, 2019. 
																			  
			
\bibitem[GL98]{GoodearlLetzter}
K.~R. Goodearl and E.~S. Letzter.
\newblock Prime and primitive spectra of multiparameter quantum affine spaces.
\newblock In {\em Trends in ring theory ({M}iskolc, 1996)}, volume~22 of {\em
  CMS Conf. Proc.}, pages 39--58. American Mathematical Society, Providence,
  RI, 1998.
		

\bibitem[GW04]{GoodearlWarfield}
K.~R. Goodearl and R.~B. Warfield, Jr.
\newblock {\em An introduction to noncommutative {N}oetherian rings}, volume~61
  of {\em London Mathematical Society Student Texts}.
\newblock Cambridge University Press, Cambridge, second edition, 2004.

\bibitem[GY16]{GoodearlYakimov}
K.~R. Goodearl and M.~T. Yakimov.
\newblock From quantum Ore extensions to quantum tori via noncommutative UFDs.
\newblock {\em Advances in Mathematics},  300: 672--716,  2016. 

\bibitem[Hay08]{Haynal}
H.~Haynal.
\newblock P{I} degree parity in {$q$}-skew polynomial rings.
\newblock {\em Journal of Algebra}, 319(10):4199--4221, 2008.

\bibitem[KLR04]{KellyLenaganRigal}
A.~C. Kelly, T.~H. Lenagan, and L.~Rigal.
\newblock {R}ing theoretic properties of quantum {G}rassmannians.
\newblock {\em Journal of Algebra and its Applications}, 3(1):9--30, 2004.

\bibitem[Lau06]{Launois-AutGrpsofEnvelopingAlgs}
S.~Launois.
\newblock {On the automorphism groups of $q$-enveloping algebras of nilpotent Lie algebras}.
\newblock In Proceedings {\em From Lie Algebras to Quantum Groups}, 2006.


\bibitem[LL17]{LaunoisLecoutre}
S.~Launois, C.~Lecoutre.
\newblock Poisson deleting derivations algorithm and {P}oisson spectrum
\newblock {\em Communications in Algebra},
  45(3):1294--1313, 2017.

\bibitem[LLN19]{LaunoisLenaganNolan}
S.~Launois, T.~H. Lenagan, and B.~Nolan.
\newblock Total positivity is a quantum phenomenon: the grassmannian case.
\newblock To appear in Memoirs of the American Mathematical Society,  2019.

\bibitem[LLR06]{LaunoisLenaganRigal2}
S.~Launois, T.~H. Lenagan, and L.~Rigal.
\newblock Quantum unique factorisation domains.
\newblock {\em Journal of the London Mathematical Society. Second Series},
  74(2):321--340, 2006.
  


\bibitem[LR06]{LenaganRigal-Straightening}
T.~H. Lenagan and L.~Rigal.
\newblock Quantum graded algebras with a straightening law and the
  {AS}-{C}ohen-{M}acaulay property for quantum determinantal rings and quantum
  {G}rassmannians.
\newblock {\em Journal of Algebra}, 301(2):670--702, 2006.

\bibitem[LR08]{LenaganRigal-SchubertVar}
T.~H. Lenagan and L.~Rigal.
\newblock Quantum analogues of {S}chubert varieties in the {G}rassmannian.
\newblock {\em Glasgow Mathematical Journal}, 50(1):55--70, 2008.

\bibitem[LM11]{LeroyMatczuk}
A.~Leroy and J.~Matczuk.
\newblock On {$q$}-skew iterated {O}re extensions satisfying a polynomial
  identity.
\newblock {\em Journal of Algebra and its Applications}, 10(4):771--781, 2011.

\bibitem[MR01]{McConnellRobson}
J.~C. McConnell and J.~C. Robson.
\newblock {\em Noncommutative {N}oetherian rings}, volume~30 of {\em Graduate
  Studies in Mathematics}.
\newblock American Mathematical Society, Providence, RI, revised edition, 2001.
\newblock With the cooperation of L. W. Small.

\bibitem[New72]{Newman}
M.~Newman.
\newblock {\em Integral matrices}.
\newblock Academic Press, New York-London, 1972.
\newblock Pure and Applied Mathematics, Vol. 45.

\bibitem[{Pos}06]{Postnikov}
A.~Postnikov.
\newblock {Total positivity, Grassmannians, and networks}.
\newblock {\em arXiv Mathematics e-prints}, page math/0609764, Sep 2006.

\bibitem[Rog19]{Thesis}
A.~Rogers.
\newblock {\em Irreducible Representations of Quantum Nilpotent Algebras at
  Roots of Unity, and Their Completely Prime Quotients}.
\newblock PhD thesis, University of Kent, 2019.



\end{thebibliography}

\end{document}